\title{On the mapping class groups of $\p^1$-bundles over $\mathbb{P}^2$}
\author{Tenglin Hu}
\affil{Academy of Mathematics and System Science, Chinese Academy of Science, Beijing 100190, China\quad
\texttt{hutenglin@amss.ac.cn}}	
\newtheorem{thm}{Theorem}[section]
\newtheorem{lemma}{Lemma}[section]
\newtheorem{cor}{Corollary}[section]
\newtheorem{prop}{Proposition}[section]
\theoremstyle{definition} 
\newtheorem{rmk}{Remark}
\newtheorem{nota}{Notation}
\newtheorem{eg}{Example}
\newtheorem*{ack}{Acknowledgements}
\newcommand{\m}{\mathrm{MCG}}
\newcommand{\z}{\mathbb Z}
\newcommand{\p}{\mathbb P}
\begin{document}
	
	\maketitle
	
	\begin{abstract}
		In this article we compute the mapping class group of the total space $S(\xi)$ of the sphere bundle of a 3-dimensional real vector bundle $\xi$ over the complex projective plane $\mathbb{P}^2$ with $\langle p_1(\xi), [\p^2] \rangle =8n+5$.  Examples of these manifolds include the Milnor hypersurface $M_1$ and its generalizations  $M_k=\{(z,z')\in \mathbb{P}^2\times\mathbb{P}^2 \ | \  \sum z^k_iz'_i=0\}$ with $k$ odd. 
	\end{abstract}
	
	\section{Introduction}		
The mapping class group $\m^+(M)$ of a smooth manifold $M$ is the group of isotopy classes of orientation preserving self diffeomorphisms of $M$. It is an important object that encodes fundamental information about the symmetries of $M$.  In this paper, we study the computation of $\m^+(M)$ for certain closed, simply-connected $6$-manifolds. 

Although closed, simply-connected $6$-manifolds with torsion-free homology groups are classified by their algebraic-topological invariants due to the work of Wall \cite{Wa66} and Jupp  \cite{Jup73}, there appears to be no systematic treatment of the computation of their mapping class groups. Particular interest has been devoted to manifolds admitting geometric structures. For instance, in \cite{KreSu} a class of $6$-manifolds with second Betti number $b_2=1$ is studied, including complete intersections of complex dimension $3$. 

The present paper considers a class of $6$-manifolds with $b_2=2$, which includes natural compact K\"ahler manifolds such as  projective bundles associated to rank-$2$ holomorphic vector bundles over the complex projective plane $\mathbb P^2$. More precisely, in this paper we study the mapping class group of the total spaces of $S^2$-bundles over $\mathbb P^2$. A priori, the structure group of such bundles is the orientation-preserving diffeomorphism group of $S^2$. It was shown by Smale  \cite{Sma59} that this group is homotopy equivalent to $SO(3)$. Consequently,  such bundles arise as the sphere bundles of $3$-dimensional real vector bundles over $\mathbb P^2$. 

Let $\mathrm{Vect}_{\mathbb R}^3(\mathbb P^2)$ denote the set of equivalence classes of $3$-dimensional real vector bundles over $\mathbb P^2$. In Proposition \ref{prop:real3} we show that the first Pontrjagin class induces a bijection 
$$p_1 \colon \mathrm{Vect}_{\mathbb R}^3(\mathbb P^2) \stackrel{\cong}{\longrightarrow}\{r\omega^2 \in H^4(\mathbb{P}^2):r\in 4\mathbb{Z}+\{0,1\}\},$$
where $\omega\in H^2(\mathbb P^2)$ is the hyperplane class.

Let $MCG(M)$ and $I(M)$ denote, respectively, the full mapping class group of $M$ (i.e., the group of isotopy classes of all self‑diffeomorphisms) and the Torelli group (i.e., the subgroup consisting of those mapping classes that induce the trivial action on cohomology). The index $[MCG(M)\colon MCG^+(M)]\leq2$ and equals 2 if and only if $M$ admits an orientation reversing diffeomorphism.
	
	\begin{thm}\label{Mth}
		Let $N_r=S(\xi_r)$ be the total space of the sphere bundle of a 3-dimensional real vector bundle $\xi_r$ over $\mathbb{P}^2$ with $p_1(\xi_r)=r \omega^2$. Then
		\begin{enumerate}
		\item There is a diagram with each row a short exact sequence 
		$$\xymatrix{
			1\ar[r]\ar[d]&I(N_r)\ar[r]\ar[d]^=&\m^+(N_r)\ar[r]\ar[d]&A^+_r\ar[r]\ar[d]&\ar[d]1\\
		    1\ar[r]&I(N_r)\ar[r]&\m(N_r)\ar[r]&A^+_r\oplus\z_2\ar[r]&1}$$
		where $A^+_r \cong \z_2$ if $r \ne -3$, and $A^+_r \cong S_3$, the symmetric group of $3$ elements, if $r=-3$;
		
		\item $I(N_r)$ is abelian.
		\item When $r=4l+1$, the action of $MCG(N_r)/I(N_r)$ on $I(N_r)$ induced by conjugation is given by : $$[h^{-1}\circ f\circ h]=deg(h)[f],\ \ \forall h\in\m(N_r),f\in I(N_r).$$ 
		\item When $r=4l+1$, there is a lower and upper bound of $|I(N_r)|$: $$\mathrm{gcd}(6,l^2+l)\mathrm{gcd}(28,l^2+l,4l+4)\leq |I(N_r)|\leq32\mathrm{gcd}(6,l^2+l)\mathrm{gcd}(28,l^2+l,4l+4).$$ 
		\item When $r=8n+5$, $$I(N_r)\cong \mathbb{Z}_{2\mathrm{gcd}(3,2n^2+3n+1)}\oplus \mathbb{Z}_{2\mathrm{gcd}(14,n+1)}.$$
		
		\end{enumerate}
	\end{thm}
	
\begin{rmk}
	In principle, similar lower and upper bounds for $|I(N_{4l})|$ can be obtained via the same Kreck-Stolz invariant, but the involved Atiyah-Hirzebruch spectral sequence calculation is rather tedious and, more importantly, it does not yield the exact isomorphism type of the Torelli group. Therefore, to avoid obscuring the precise computation achieved for $r=8n+5$, we leave the exact determination of $I(N_{4l})$ for future work.
\end{rmk}	
		
The projective unitary group $PU(2)$ is isomorphic to the orthogonal group $SO(3)$; consequently, $S^2$-bundles may be identified with $\p^1$-bundles. Thus, the manifolds considered in this paper include many complex K\"ahler manifolds, such as projective bundle associated to rank-$2$ holomorphic vector bundles over $\p^2$. 

Rank-$2$ complex vector bundles over $\p^2$ are classified by their Chern classes $c_1$ and $c_2$. For such a vector bundle $\gamma$ with Chern classes $c_1(\gamma)$ and $c_2(\gamma)$, the corresponding $3$-dimensional real vector bundle $\xi$ satisfies 
$$p_1(\xi) = c_1(\gamma)^2 - 4 c_2(\gamma).$$ 
(see Proposition \ref{prop:real3}). Consequently, Theorem \ref{Mth} also determines the mapping class group of the projective bundle $\p\gamma$ of rank-$2$ complex vector bundles $\gamma$ over $\p^2$, provided that $\langle c_1(\gamma)^2 - 4c_2(\gamma), [\p^2] \rangle = 8n+5$.

An interesting class of examples is given by the hypersurfaces $M_k \subset \p^2 \times \p^2$ defined by 
$$M_k=\{(z,z')\in \mathbb{P}^2\times\mathbb{P}^2 \ | \  \sum z^k_iz'_i=0\}.$$
Note that  $M_1=\{(z,z')\in \mathbb{P}^2\times\mathbb{P}^2 \ | \  \sum z_iz'_i=0\}$ is the Milnor hypersurface, a Fano 3-fold. The hypersurfaces $M_k$ can be identified with $N_{-3k^2}$ (see \S \ref{subsec:bundle}). We  therefore obtain the following corollary. 
	
	\begin{cor}\label{gmhs}
		Let $M_k=\{(z,z')\in \mathbb{P}^2\times\mathbb{P}^2 \ | \  \sum z^k_iz'_i=0\}$, then
		\begin{enumerate}
		\item there is a short exact sequence 
		$$ 1 \to I(M_k) \to \m^+(M_k) \to A \to 1,$$
		where $A \cong \z_2$ if $k \ne 1$, and $A \cong S_3$ if $k=1$;
		\item when $k=2l+1$, $I(M_k)$ is isomorphic to $\mathbb{Z}_6\oplus \mathbb{Z}_{\mathrm{gcd}(28,l^2+l)}$.    
		\end{enumerate}
	\end{cor}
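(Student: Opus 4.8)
The plan is to derive Corollary~\ref{gmhs} from Theorem~\ref{Mth} by transporting it along the diffeomorphism $M_k\cong N_{-3k^2}$ recorded in \S\ref{subsec:bundle}. A diffeomorphism induces an isomorphism of mapping class groups carrying the Torelli subgroup isomorphically onto the Torelli subgroup (both being defined through the action on cohomology), so $\m(M_k)\cong\m(N_{-3k^2})$ and $I(M_k)\cong I(N_{-3k^2})$ compatibly; thus everything reduces to specializing the parameter $r$ in Theorem~\ref{Mth} to $-3k^2$ and doing elementary arithmetic. I would first note that $-3k^2$ always lies in $4\mathbb{Z}+\{0,1\}$ — it is $\equiv 0\pmod 4$ if $k$ is even and $\equiv 1\pmod 4$ if $k$ is odd — so by Proposition~\ref{prop:real3} there really is a rank-$3$ real vector bundle $\xi$ over $\mathbb{P}^2$ with $p_1(\xi)=-3k^2\omega$ and $N_{-3k^2}$ is defined. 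Part~(1) is then immediate from Theorem~\ref{Mth}(1): for $k\ge 1$ one has $-3k^2=-3$ exactly when $k=1$, so $A\cong S_3$ when $k=1$ and $A\cong\mathbb{Z}_2$ otherwise.

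For part~(2) I would set $k=2l+1$. Then $k^2=4l^2+4l+1\equiv 1\pmod 8$, hence $-3k^2\equiv 5\pmod 8$, and we may write $-3k^2=8n+5$; solving gives $n=-\tfrac{3(l^2+l)}{2}-1$, which is an integer since $l^2+l=l(l+1)$ is even. Theorem~\ref{Mth}(2) then gives $I(M_k)\cong\mathbb{Z}_{2\gcd(3,\,2n^2+3n+1)}\oplus\mathbb{Z}_{2\gcd(14,\,n+1)}$, and it remains to rewrite the two summands in terms of $l$. Since $2n^2+3n+1=(2n+1)(n+1)$ and $n+1=-\tfrac{3(l^2+l)}{2}$ is divisible by $3$, the first summand is $\mathbb{Z}_{6}$. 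Setting $s=\tfrac{l^2+l}{2}$, so that $n+1=-3s$, and using $\gcd(3,14)=1$, I get $\gcd(14,n+1)=\gcd(14,s)$, whence $2\gcd(14,n+1)=\gcd(28,2s)=\gcd(28,l^2+l)$; so the second summand is $\mathbb{Z}_{\gcd(28,l^2+l)}$, as claimed. (Sanity check: $k=1$ gives $l=0$, $n=-1$, and the formula returns $\mathbb{Z}_6\oplus\mathbb{Z}_{28}$.)

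I do not expect any genuine obstacle here: all the substantive content is already in Theorem~\ref{Mth} and in the identification $M_k\cong N_{-3k^2}$, so the corollary is just an instantiation followed by modular bookkeeping. The only points needing a moment of care are the divisibility and parity facts above — that $-3k^2\in 4\mathbb{Z}+\{0,1\}$ so $N_{-3k^2}$ exists, that $n$ is an integer, that $3\mid(n+1)$, and that $\gcd(14,3s)=\gcd(14,s)$ — each of which follows from the elementary observations that $k$ odd $\Rightarrow k^2\equiv 1\pmod 8$ and that $l(l+1)$ is even.
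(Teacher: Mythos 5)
Your proposal is correct and matches the paper's intended argument: the corollary is obtained exactly by identifying $M_k=\mathbb{P}(\zeta_k)$ with $N_{-3k^2}$ (Example \ref{eg:1}) and specializing Theorem \ref{Mth}, and your arithmetic ($n+1=-\tfrac{3(l^2+l)}{2}$, hence $2\gcd(3,(2n+1)(n+1))=6$ and $2\gcd(14,n+1)=\gcd(28,l^2+l)$) is exactly the required bookkeeping. No gaps.
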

		
To conclude this introduction, we briefly describe the strategy used in our computations. It combines methods from homotopy theory with classical and modified surgery theory. The determination of the action of $\m(M)$ on the cohomology groups relies on basic geometric considerations and constructions; this is carried out in \S \ref{sec:action}. 

The determination of the Torelli group $I(M)$ is more subtle. By classical surgery theory, for a simply-connected manifold $M$ of dimension at least $5$, there is a short exact sequence 
	$$ S(M \times I, \partial) \to I(M) \to I^h(M),$$
where $S(M \times I, \partial)$ denotes the relative smooth structure set of $M \times I$, and $I^h(M)$ denotes the homotopy Torelli group (see \S \ref{subsec:gen}) For the manifolds $N_r$, the image of $S(N_r \times I, \partial)$ in $I(N_r)$ is generated by a disk-supported diffeomorphism $g_1$. On the other hand, when $r \in 8 \mathbb Z +5$,
 homotopy-theoretical arguments show that there is a surjective homomorphism $\pi_6(N_r) \to I^h(N_r)$, whose image is generated by a certain Dehn twist $g_2$.  Consequently, in this case the Torelli group $I(N_r)$ is generated by $g_1$ and $g_2$ (see \S \ref{subsec:gen}). To determine the structure of $I(N_r)$ we make use of a Kreck-Stolz type invariant provided by modified surgery theory, which is an injective homomorphism from the relative Torelli group $I(N_r,D)$ to $\z^3$ modulo a lattice (see \S \ref{subsec:det}). 
 
 \begin{ack}
The author thanks Professor Yang Su for entrusting him with this project and for patiently guiding him through the research and writing process. The author is also grateful to Professor Stephen Theriault, who pointed out a key direction for proving Proposition \ref{ntb}, and Professor Matthias Kreck, who suggest the author to consider Proposition \ref{rconju}. This research was partially supported by NSFC 12471069 and Beijing Natural Science Foundation 1262024.
 \end{ack}
	
	\section{Basic properties of $S^2$-bundles over $\mathbb{P}^2$}
In this section we study the algebraic topology of $S^2$-bundles over $\p^2$. In \S \ref{subsec:bundle} we consider the classification of $S^2$-bundles over $\p^2$, and the algebraic topology of the total spaces. These facts are standard knowledge in the theory of vector bundles. For the convenience of the reader we give proofs of these facts. In \S \ref{subsec:hmtp} we analyze the homotopy groups of the total spaces. Some non-trivial arguments in homotopy theory are involved. 

	\subsection{Classification as bundles} \label{subsec:bundle}
For the sake of simplicity,	in the sequel we will use the same notation for (the equivalence class of) a vector bundle and (the homotopy class of) its classifying map from it base space to the classifying space.
	
	\begin{lemma}\label{lem:chern}
Let $c_1$ and $c_2$ denote the first and second Chern class, respectively,  of a complex vector bundle over $\p^2$. Then the map $(c_1,c_2) \colon  [\mathbb{P}^2, BU(2)]\to H^2(\mathbb{P}^2)\times H^4(\mathbb{P}^2)$ is a bijection.
	\end{lemma}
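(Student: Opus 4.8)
The plan is to compute the set $[\mathbb{P}^2, BU(2)]$ abstractly, using that $\mathbb{P}^2$ is a $4$-dimensional CW complex together with the low-dimensional homotopy type of $BU(2)$, and then to check that the resulting bijection onto $H^2(\mathbb{P}^2;\mathbb{Z}) \times H^4(\mathbb{P}^2;\mathbb{Z})$ is exactly the one given by $(c_1, c_2)$.

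First I would record the low homotopy groups of $BU(2)$. From $\pi_k(BU(2)) \cong \pi_{k-1}(U(2))$ and the fibration $SU(2) \to U(2) \to S^1$, where $SU(2)$ is diffeomorphic to $S^3$, one gets $\pi_1(BU(2)) = \pi_3(BU(2)) = 0$, $\pi_2(BU(2)) \cong \mathbb{Z}$ and $\pi_4(BU(2)) \cong \pi_3(SU(2)) \cong \mathbb{Z}$. Hence the Postnikov tower of the simply-connected space $BU(2)$ has $P_2(BU(2)) = P_3(BU(2)) = K(\mathbb{Z},2)$, and its fourth stage is a fibration $K(\mathbb{Z},4) \to P_4(BU(2)) \to K(\mathbb{Z},2)$ whose $k$-invariant lies in $H^5\big(K(\mathbb{Z},2);\mathbb{Z}\big) = H^5(\mathbb{CP}^\infty;\mathbb{Z}) = 0$. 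Therefore $P_4(BU(2)) \simeq K(\mathbb{Z},2) \times K(\mathbb{Z},4)$, and since $BU(2) \to P_4(BU(2))$ is $5$-connected while $\dim \mathbb{P}^2 = 4$, it induces a bijection
$$ [\mathbb{P}^2, BU(2)] \;\xrightarrow{\ \cong\ }\; \big[\mathbb{P}^2,\ K(\mathbb{Z},2)\times K(\mathbb{Z},4)\big] \;=\; H^2(\mathbb{P}^2;\mathbb{Z}) \times H^4(\mathbb{P}^2;\mathbb{Z}). $$

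It remains to identify this bijection with $(c_1,c_2)$. Pulling the two product projections of $P_4(BU(2))$ back along $BU(2) \to P_4(BU(2))$ gives a generator of $H^2(BU(2);\mathbb{Z}) = \mathbb{Z}\langle c_1 \rangle$ --- which, after fixing signs, we may take to be $c_1$ --- and a class $\alpha \in H^4(BU(2);\mathbb{Z}) = \mathbb{Z}\langle c_1^2\rangle \oplus \mathbb{Z}\langle c_2 \rangle$. This $\alpha$ must restrict to a generator of $H^4(S^4;\mathbb{Z})$ along a representative $S^4 \to BU(2)$ of the generator of $\pi_4(BU(2))$; since $c_1$, and hence $c_1^2$, restricts to $0$ there while $c_2$ restricts to $\pm 1$, we conclude $\alpha = \pm c_2 + m\, c_1^2$ for some $m \in \mathbb{Z}$. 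Consequently $(c_1,c_2)$ is the composite of the bijection above (written via $(c_1,\alpha)$) with the self-bijection $(x,y) \mapsto \big(x,\ \pm(y - m x^2)\big)$ of $H^2(\mathbb{P}^2;\mathbb{Z}) \times H^4(\mathbb{P}^2;\mathbb{Z})$, hence is itself a bijection.

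The one step that is not purely formal is the claim that $c_2$ restricts to $\pm 1$ on the generator of $\pi_4(BU(2))$, equivalently that $c_2$ gives an isomorphism from the set of rank-$2$ complex vector bundles over $S^4$ onto $H^4(S^4;\mathbb{Z})$; I would obtain this from the chain $\pi_4(BU(2)) \cong \pi_3(U(2)) \cong \pi_3(SU(2)) \cong \pi_3(S^3) \cong \mathbb{Z}$ together with the standard evaluation $c_2 = \pm 1$ on the basic $SU(2)$-bundle over $S^4$. As an alternative to the Postnikov argument, one may apply $[-,BU(2)]$ to the cofiber sequence $S^3 \xrightarrow{\eta} S^2 \hookrightarrow \mathbb{P}^2 \to S^4$, obtaining an exact sequence of pointed sets $\pi_4(BU(2)) \to [\mathbb{P}^2,BU(2)] \to \pi_2(BU(2)) \to \pi_3(BU(2)) = 0$; here a little more care is needed to check, using $\pi_3(BU(2))=0$, that $\pi_4(BU(2))$ acts freely on each fiber of the second map, after which the set is again identified with $\mathbb{Z} \times \mathbb{Z}$ and the argument concludes as before.
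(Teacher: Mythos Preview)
Your proof is correct and follows essentially the same approach as the paper: both use that the $4$th Postnikov stage of $BU(2)$ is $K(\mathbb{Z},2)\times K(\mathbb{Z},4)$ and that $\mathbb{P}^2$ is $4$-dimensional. The paper simply asserts that the Postnikov map is given by $(c_1,c_2)$, whereas you supply the details (vanishing $k$-invariant in $H^5(K(\mathbb{Z},2);\mathbb{Z})$, and the identification of the second factor with $c_2$ up to a harmless shear by $c_1^2$), so your version is a more careful rendering of the same argument.
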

	\begin{proof}
The 4-th stage Postnikov tower of $BU(2)$ is given by taking the first two Chern classes of the universal bundle over $BU(2)$
$$(c_1,c_2) \colon BU(2) \to K(\mathbb{Z},2)\times K(\mathbb{Z},4).$$
Since $\mathbb{P}^2$ is a $4$-dimensional CW-complex, the statement of the lemma follows immediately.
\end{proof}
 
In the light of Lemma \ref{lem:chern}, we fix notations for the discussion in the remaining part of the paper.

\begin{nota} Let $\omega\in H^2(\mathbb{P}^2)$ be the first Chern class of the tautological bundle $O(-1)$. Denote by $\gamma_{k,l}$ the rank-$2$ complex vector bundle over $\p^2$ with total Chern class  $c(\gamma_{k,l})=1+k\omega+l\omega^2$ ($k, l \in \z$). 	
\end{nota}
	
	Let $PU(2)$ be the projective unitary group, $q \colon BU(2) \to BPU(2)$ be the map between classifying spaces induced by the quotient map $U(2) \to PU(2)$. 
	\begin{lemma}\label{lem:q}
		The map $q_* \colon [\mathbb{P}^2, BU(2)]\to[\mathbb{P}^2, BPU(2)]$ is surjective.
	\end{lemma}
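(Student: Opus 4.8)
The plan is to analyze the fibration $U(1) \to BU(2) \to BPU(2)$ (equivalently, use that $PU(2) = U(2)/U(1)$ gives a fibration $BU(1) \to BU(2) \to BPU(2)$), and show that the obstruction to lifting a map $\mathbb P^2 \to BPU(2)$ along $q$ vanishes. Since $BU(1) = K(\mathbb Z,2) = \mathbb{CP}^\infty$, this is a principal fibration classified by a map $BPU(2) \to K(\mathbb Z,3)$, i.e. by a class in $H^3(BPU(2);\mathbb Z)$. Given $f \colon \mathbb P^2 \to BPU(2)$, the single obstruction to lifting it lives in $H^3(\mathbb P^2; \pi_2(BU(1))) = H^3(\mathbb P^2;\mathbb Z)$, which is zero because $\mathbb P^2$ has cohomology only in degrees $0,2,4$. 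Hence every $f$ lifts, which is exactly surjectivity of $q_*$.

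First I would set up the fibration precisely: from $1 \to U(1) \to U(2) \to PU(2) \to 1$ one gets a fibration sequence $BU(1) \to BU(2) \xrightarrow{q} BPU(2)$, and since $BU(1) = K(\mathbb Z,2)$ this exhibits $q$ as a principal $K(\mathbb Z,2)$-fibration, classified by some $\beta \in H^3(BPU(2);\mathbb Z)$ (in fact $\beta$ is the integral generator, related to the connecting map; one need not identify it explicitly). Second, for a given classifying map $f \colon \mathbb P^2 \to BPU(2)$, obstruction theory for lifting along a principal fibration with fiber $K(\mathbb Z,2)$ says: there is exactly one obstruction, the class $f^*\beta \in H^3(\mathbb P^2;\mathbb Z)$, and $f$ lifts if and only if this class vanishes. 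Third, observe $H^3(\mathbb P^2;\mathbb Z) = 0$ since $\mathbb P^2 \simeq S^2 \cup_{?} e^4$ has cells only in dimensions $0,2,4$. Therefore $f^*\beta = 0$ automatically, a lift exists, and $q_*$ is surjective.

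Alternatively, and perhaps more in keeping with the paper's explicit style, one can argue directly on cells: a map $\mathbb P^2 \to BPU(2)$ is determined up to homotopy by its restrictions to the $2$-cell and the attaching of the $4$-cell; using $\pi_2(BPU(2)) \cong \pi_1(PU(2)) \cong \mathbb Z_2$ and comparing with $\pi_2(BU(2)) \cong \pi_1(U(2)) \cong \mathbb Z$ (so $q_*$ on $\pi_2$ is the reduction $\mathbb Z \to \mathbb Z_2$, which is surjective), one lifts over the $2$-skeleton; the obstruction to extending the lift over the $4$-cell lies in $H^4(\mathbb P^2;\pi_3(BU(1))) = H^4(\mathbb P^2; \pi_3(K(\mathbb Z,2))) = H^4(\mathbb P^2;0) = 0$, so no obstruction arises at that stage either, except one must be careful that the genuine obstruction is the $H^3$ class of the previous paragraph rather than an $H^4$ class — the cleanest bookkeeping is the principal-fibration statement above.

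I expect the only real subtlety to be correctly identifying which fibration to use and hence in which cohomology group the obstruction lives: one must use $BU(1) \to BU(2) \to BPU(2)$ with fiber $K(\mathbb Z,2)$ (not fiber $PU(2)$ or $U(1)$ directly), so that the obstruction sits in $H^3(\mathbb P^2;\mathbb Z)$, which is the group that conveniently vanishes. Once that is pinned down, the argument is immediate. A secondary point to check is that $BPU(2)$ is simply connected (it is, since $PU(2)$ is connected), so that ordinary obstruction theory with untwisted coefficients applies without fuss.
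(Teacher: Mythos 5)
Your proposal is correct and follows essentially the same route as the paper: both identify the homotopy fiber of $q$ as $BU(1)=K(\mathbb Z,2)$ and observe that the lifting obstructions lie in $H^{n+1}(\mathbb P^2;\pi_n(K(\mathbb Z,2)))$, i.e.\ in $H^3(\mathbb P^2;\mathbb Z)=0$. Your extra remarks about the principal $K(\mathbb Z,2)$-fibration classified by a class in $H^3(BPU(2);\mathbb Z)$ are a harmless refinement of the same argument.
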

	\begin{proof}
		The homotopy fiber of the map $q \colon BU(2) \to BPU(2)$ is $BS^1=K(\mathbb{Z},2)$. The obstruction groups to lifting a map $f \colon \mathbb{P}^2 \to BPU(2)$ along $q$ to a map $\mathbb{P}^2 \to BU(2)$ are $H^{n+1}(\mathbb{P}^2,\pi_n(K(\mathbb{Z},2)))$. These groups clearly vanish.

	\end{proof}
	
The representation  $SU(2)\to SO(3)$ 
	$$\begin{pmatrix} a&b\\\bar{b}&\bar{a} \end{pmatrix} \to \begin{pmatrix} |a|^2-|b|^2&2Im(a\bar{b})&2Re(a\bar{b})\\2Im(ab)&Re(a^2+b^2)&-Im(a^2-b^2) \\-2Re(ab)&Im(a^2+b^2)&Re(a^2-b^2)\end{pmatrix} $$
 induces an isomorphism (Lemma 1.23 \cite{Mar}) $\pi \colon  PU(2)=SU(2)/\{\pm1\}\to SO(3)$. 
 Let $\phi=\pi\circ q \colon U(2)\to SO(3)$ be the representation, then  $\phi$ associates a $3$-dimensional real bundle $\phi (\gamma)$ to a $2$-dimensional complex bundle $\gamma$. 	
	\begin{lemma}\label{lem:p1phi}
		$p_1(\phi(\gamma))=c_1^2(\gamma)-4c_2(\gamma)$ for any $2$-dimensional complex bundle $\gamma$.
	\end{lemma}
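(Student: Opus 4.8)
The plan is to compute both sides of the claimed identity by reducing the statement to a universal calculation in the cohomology of $BU(2)$, and then to verify it on a convenient collection of bundles. The class $p_1(\phi(\gamma))$ depends only on the classifying map $\gamma \colon \mathbb P^2 \to BU(2)$ composed with $B\phi \colon BU(2) \to BSO(3)$, so it suffices to identify the pullback of the universal first Pontrjagin class $p_1 \in H^4(BSO(3))$ under $B\phi$ as an element of $H^4(BU(2);\mathbb Z) \cong \mathbb Z c_1^2 \oplus \mathbb Z c_2$. Thus I would first argue that $B\phi^*(p_1) = \alpha c_1^2 + \beta c_2$ for integers $\alpha,\beta$, and then pin down $\alpha$ and $\beta$ by evaluating on two test bundles whose images under $\phi$ are understood.

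The natural test inputs are line-bundle-type data, since $\phi$ is built from $SU(2) \to SO(3)$ composed with $U(2) \to PU(2)$. First I would take $\gamma = L \oplus L$ for a line bundle $L$ with $c_1(L) = x$; here $c_1(\gamma) = 2x$, $c_2(\gamma) = x^2$. Since the $U(2) \to PU(2)$ map kills the central $S^1$, a bundle of the form $L\oplus L$ (classified by a map into the center) maps to the trivial $PU(2)$-bundle, hence $\phi(\gamma)$ is trivial and $p_1(\phi(\gamma)) = 0$; this gives $4\alpha + \beta = 0$. For a second relation I would use $\gamma = L \oplus \underline{\mathbb C}$, so $c_1(\gamma) = x$, $c_2(\gamma)=0$, and I must compute $p_1$ of the associated $SO(3)$-bundle. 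Tracking the explicit representation $SU(2) \to SO(3)$ displayed above (or, equivalently, using that $\phi$ sends $U(2)$ to $SO(3)$ and that the associated rank-$3$ real bundle of $L\oplus \underline{\mathbb C}$ is $\underline{\mathbb R} \oplus \mathrm{Sym}^2_{\mathbb R}$-type data built from $L$), one finds $\phi(L\oplus \underline{\mathbb C})$ has first Pontrjagin class $x^2$, giving $\alpha = 1$, hence $\beta = -4$, i.e. $p_1(\phi(\gamma)) = c_1^2(\gamma) - 4c_2(\gamma)$.

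An alternative route, which I would use to cross-check or to replace the second test computation, is to complexify: for an $SO(3)$-bundle $E$ one has $p_1(E) = -c_2(E\otimes \mathbb C)$, and $E\otimes\mathbb C \cong \mathrm{ad}(\gamma)$, the adjoint bundle, which for a $U(2)$-bundle $\gamma$ fits in $\mathrm{End}(\gamma) \cong \underline{\mathbb C} \oplus \mathrm{ad}(\gamma)$. Then $c(\mathrm{End}(\gamma))$ is computed from the Chern roots $a_1,a_2$ of $\gamma$: the roots of $\mathrm{End}(\gamma)$ are $0,0,a_1-a_2,a_2-a_1$, so $c_2(\mathrm{End}(\gamma)) = -(a_1-a_2)^2 = -(c_1^2 - 4c_2)$, whence $p_1(\phi(\gamma)) = -c_2(\mathrm{ad}\gamma) = c_1^2(\gamma) - 4c_2(\gamma)$. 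This is clean and avoids the matrix bookkeeping entirely.

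The main obstacle is not conceptual but a matter of care: fixing signs and the normalization of $p_1$ (sign conventions for Pontrjagin classes vary across sources), and making sure the identification $E\otimes\mathbb C \cong \mathrm{ad}(\gamma)$ (respectively the explicit $SU(2)\to SO(3)$ formula) is applied with the correct orientation conventions so that the universal coefficients $\alpha,\beta$ come out as $(1,-4)$ rather than $(\pm1,\mp4)$. I would therefore present the complexification argument as the primary proof and mention the trivial-bundle test $\gamma = L\oplus L$ as an immediate consistency check ($4\cdot 1 + (-4) = 0$).
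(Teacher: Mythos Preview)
Your complexification argument---identify $\phi(\gamma)\otimes\mathbb C$ with the adjoint bundle $\mathrm{ad}(\gamma)$, read off Chern roots $0,\,a_1-a_2,\,a_2-a_1$, and use $p_1=-c_2$---is correct and is exactly the paper's proof: the paper computes the weights $0,\,\theta-\omega,\,\omega-\theta$ of the complexified representation $\phi_{\mathbb C}$ by explicitly diagonalizing the image of the maximal torus, then applies the Borel--Hirzebruch formula, which amounts to the same Chern-root calculation you wrote down. Your test-bundle approach with $L\oplus L$ and $L\oplus\underline{\mathbb C}$ is a pleasant extra consistency check not present in the paper, though as you note the $L\oplus\underline{\mathbb C}$ case still requires essentially the same representation-theoretic input to pin down $\alpha=1$.
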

	\begin{proof}
		Let $\phi_{\mathbb{C}}:U(2)\to U(3)$ be the complexification of $\phi$. The Chern classes of the complex vector bundle $\phi(\gamma) \otimes \mathbb C$ can be computed  from the weights of the representation $\phi_{\mathbb C}$ by the Borel-Hirzebruch formula \cite{BrHi}. A direct calculation shows		
		$$\phi_{\mathbb{C}}\begin{pmatrix} e^{i\theta}&0\\0&e^{i\omega} \end{pmatrix} = \begin{pmatrix} 1&0&0\\0&cos(\theta-\omega)&-sin(\theta-\omega) \\0&sin(\theta-\omega)&cos(\theta-\omega)\end{pmatrix} $$
		$$=\frac{1}{2}\begin{pmatrix} 1&0&0\\0&1&-i \\0&1&i\end{pmatrix}\begin{pmatrix} 1&0&0\\0&e^{i(\theta-\omega)}&0 \\0&0&e^{i(\omega-\theta)}\end{pmatrix}\begin{pmatrix} 1&0&0\\0&1&1 \\0&i&-i\end{pmatrix}$$
So the weights of $\phi_{\mathbb C}$ are  $0$, $\theta-\omega$ and $\omega-\theta$. 
		
		Let $T^2\subset U(2)$ be the standard maximal tours and let $r_1,r_2\in H^2(BT^2)$ be the Chern roots of the universal $U(2)$ bundle. Then by the Borel-Hirzebruch formula, the total Chern class $c(\phi_{\mathbb{C}})=(1-r_1+r_2)(1-r_2+r_1)=1-(r_1+r_2)^2+4r_1r_2$. Thererfore $c_2(\phi(\gamma) \otimes \mathbb C)=4c_2(\xi)-c_1(\xi)^2$.  This proves the lemma. 
	\end{proof}

	\begin{lemma}\label{lem:p1}
Let $p_1$ denote the first Pontrjagin class of a real vector bundle over $\p^2$. Then the map $p_1 \colon  [\mathbb{P}^2, BSO(3)]\to H^4(\mathbb{P}^2)$ is injective.
	\end{lemma}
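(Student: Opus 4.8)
The plan is to leverage Lemmas~\ref{lem:q}, \ref{lem:chern} and \ref{lem:p1phi}, together with the observation that the associated $SO(3)$-bundle $\phi(\gamma)$ is insensitive to tensoring $\gamma$ by a complex line bundle. Since $BPU(2)=BSO(3)$ and $p_1$ is a well-defined characteristic class, the ``only if'' direction is automatic, so it suffices to show: if $\gamma_{k,l}$ and $\gamma_{k',l'}$ satisfy $p_1(\phi(\gamma_{k,l}))=p_1(\phi(\gamma_{k',l'}))$, then $\phi(\gamma_{k,l})\cong\phi(\gamma_{k',l'})$. By Lemma~\ref{lem:q} together with Lemma~\ref{lem:chern}, and the fact that over the simply-connected $\mathbb{P}^2$ every rank-$3$ real bundle is canonically oriented, every element of $[\mathbb{P}^2,BSO(3)]$ is of the form $\phi(\gamma_{k,l})$, so this statement covers all pairs of bundles.

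First I would record the numerics. With $x=c_1(O(-1))$ we have $x^2=\omega$, so Lemma~\ref{lem:p1phi} gives $p_1(\phi(\gamma_{k,l}))=\bigl(c_1(\gamma_{k,l})^2-4c_2(\gamma_{k,l})\bigr)=(k^2-4l)\,\omega$. Thus the hypothesis reads $k^2-4l=k'^2-4l'$. Reducing mod $2$ gives $k\equiv k^2-4l\equiv k'^2-4l'\equiv k'\pmod 2$, so one may write $k'=k+2m$ with $m\in\mathbb{Z}$; the equality $k'^2-4l'=k^2-4l$ then forces $l'=l+km+m^2$. Using the standard formulas $c_1(\gamma\otimes L)=c_1(\gamma)+2c_1(L)$ and $c_2(\gamma\otimes L)=c_2(\gamma)+c_1(\gamma)c_1(L)+c_1(L)^2$ with $L=O(-m)$, one checks that $c(\gamma_{k,l}\otimes O(-m))=1+(k+2m)x+(l+km+m^2)x^2=c(\gamma_{k',l'})$, so by Lemma~\ref{lem:chern}, $\gamma_{k',l'}\cong\gamma_{k,l}\otimes O(-m)$.

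It then remains to see that $\phi(\gamma\otimes L)\cong\phi(\gamma)$ for any complex line bundle $L$. This holds because the representation $\pi\circ q\colon U(2)\to SO(3)$ is, under $SO(3)\cong PU(2)$, the adjoint representation; hence $\phi(\gamma)$ is the adjoint bundle $\mathrm{ad}(\gamma)$, the bundle of traceless skew-Hermitian endomorphisms of $\gamma$. Since $\mathrm{End}(\gamma\otimes L)\cong\mathrm{End}(\gamma)$ canonically (as $L\otimes L^{*}$ is trivial), the traceless skew-Hermitian summands agree, giving $\phi(\gamma\otimes L)\cong\phi(\gamma)$. Combining, $\phi(\gamma_{k',l'})\cong\phi(\gamma_{k,l}\otimes O(-m))\cong\phi(\gamma_{k,l})$, which proves injectivity.

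I expect the only point needing real care to be this last step: the identification of $\phi(\gamma)$ with the adjoint bundle, and hence its invariance under central twists. An alternative here is to argue via the fibration $BU(1)\to BU(2)\xrightarrow{q}BPU(2)$, whose fibrewise $BU(1)$-action on $[\mathbb{P}^2,BU(2)]$ is exactly tensoring by line bundles and along which $q_{*}$ is constant on orbits. Everything else is a direct computation with Chern classes.
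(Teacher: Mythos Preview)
Your argument is correct, but it proceeds along a genuinely different line from the paper's own proof. The paper argues directly on $[\mathbb{P}^2,BSO(3)]$ via the cell structure: equal $p_1$ forces equal $w_2$ (from $p_1\equiv w_2^2\pmod 2$), so the two bundles agree over $\mathbb{P}^1$; their difference is then measured by a class $\alpha\in\pi_4(BSO(3))$ acting through the pinch map $\mu:\mathbb{P}^2\to\mathbb{P}^2\vee S^4$, and the additivity $p_1(\mu^*(\alpha,f))=p_1(f)+p_1(\alpha)$ together with injectivity of $p_1$ on $\pi_4(BSO(3))$ forces $\alpha=0$.

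Your route instead lifts everything to $BU(2)$ via Lemma~\ref{lem:q}, uses Lemmas~\ref{lem:chern} and~\ref{lem:p1phi} to turn the problem into the elementary Diophantine identity $k^2-4l=k'^2-4l'$, and then observes that two lifts with the same $p_1$ differ exactly by tensoring with a line bundle, which the quotient $q:U(2)\to PU(2)$ kills. This is a clean and conceptually satisfying picture: you are really identifying the fibres of $q_*$ with the orbits of the line-bundle action and showing $p_1\circ\phi$ separates those orbits. It has the pleasant side-effect of essentially proving Proposition~\ref{prop:real3} in one stroke. The trade-off is that your proof depends on Lemmas~\ref{lem:chern}--\ref{lem:p1phi}, whereas the paper's obstruction-theoretic argument for Lemma~\ref{lem:p1} is self-contained and would work over any simply-connected $4$-complex. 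One small stylistic remark: the phrase ``the `only if' direction is automatic'' is out of place, since the lemma is an injectivity statement rather than a biconditional; you can simply drop that sentence.
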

	\begin{proof}
		Suppose $f_1$ and $f_2\in [\mathbb{P}^2, BSO(3)]$ satisfy $p_1(f_1)=p_1(f_2)$. Let $w_2$ denote the second Stiefel-Whitney class. By the congruence relation $w_2^2 \equiv p_1 \pmod 2$, we have $w_2(f_1)=w_2(f_2)$. Since $3$-dimensional real vector bundles over $\p^1$ are classified by their $w_2$, we may assume (after a homotopy) that $f_1$ and $f_2$ coincide on $\mathbb{P}^1$. Therefore there exists some $\alpha\in \pi_4(BSO(3))$ such that under the action $\mu^* \colon \pi_4(BSO(3)) \times [\p^2, BSO(3)] \to [\p^2, BSO(3)]$ induced by the pinch map $\mu \colon \p^2 \to \p^2 \vee S^4$, one has $\mu^*(\alpha,f_1)=f_2$. It is ready to see $p_1(\mu^*(\alpha,f_1)) = p_1(f_1) + p_1(\alpha)$. Therefore the identity $p_1(f_1)=p_1(f_2)$ implies $p_1(\alpha)=0$, which further implies $\alpha=0$.
		\end{proof}

Now Lemmas \ref{lem:q}, \ref{lem:p1phi} and \ref{lem:p1} imply
	
	 \begin{prop}\label{prop:real3}
	 	The map 
		$$p_1 \colon  [\mathbb{P}^2, BSO(3)]\to \{r\omega^2\in H^4(\mathbb{P}^2) \ | \ r\in4\mathbb{Z}+\{0,1\}\}$$ 
	is a bijection and maps $\phi(\gamma_{k,l})$ to $(k^2-4l)\omega^2$.
	 \end{prop}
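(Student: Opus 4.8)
The plan is to assemble Proposition~\ref{prop:real3} from the three lemmas already proved. First I would establish surjectivity of $p_1$ onto $\{rx^2 : r \in 4\mathbb{Z}+\{0,1\}\}$. For the values $r \in 4\mathbb{Z}$, note that $\phi(\gamma_{0,l})$ has $p_1 = (0^2 - 4l)x^2 = -4lx^2$ by Lemma~\ref{lem:p1phi}, so every multiple of $4$ is realized by a bundle coming from a $U(2)$-bundle; for the residue class $4\mathbb{Z}+1$ one similarly takes $\phi(\gamma_{1,l})$, giving $p_1 = (1-4l)x^2$. Since by Lemma~\ref{lem:q} the map $q_*$ is surjective, every class in $[\mathbb{P}^2, BPU(2)] = [\mathbb{P}^2, BSO(3)]$ comes from some $\gamma_{k,l}$, and by Lemma~\ref{lem:p1phi} its first Pontrjagin class is $(k^2-4l)x^2$; as $k^2 \equiv 0$ or $1 \pmod 4$, the image of $p_1$ is contained in the stated set. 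Combining, $p_1$ is onto that set and the formula $\phi(\gamma_{k,l}) \mapsto (k^2-4l)x^2$ holds.

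Next I would address injectivity. This is exactly the content of Lemma~\ref{lem:p1}, which says $p_1 \colon [\mathbb{P}^2, BSO(3)] \to H^4(\mathbb{P}^2)$ is injective; restricting the codomain to the image does not affect injectivity. Hence $p_1$ is a bijection onto $\{rx^2 : r \in 4\mathbb{Z}+\{0,1\}\}$, and the identification of $\phi(\gamma_{k,l})$ is as computed above. One small bookkeeping point worth making explicit: the isomorphism $PU(2) \cong SO(3)$ gives an identification $[\mathbb{P}^2, BPU(2)] \cong [\mathbb{P}^2, BSO(3)]$, so that Lemma~\ref{lem:q}'s surjectivity statement really does feed into a statement about $3$-dimensional real vector bundles via $\phi = \pi \circ q$.

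Since all three input lemmas are already in hand, there is essentially no obstacle; the only thing requiring a moment's care is verifying that the arithmetic image $\{k^2 - 4l : k, l \in \mathbb{Z}\}$ equals exactly $4\mathbb{Z} + \{0,1\}$ — the inclusion $\subseteq$ is the congruence $k^2 \equiv 0,1 \pmod 4$, and the inclusion $\supseteq$ follows by choosing $k \in \{0,1\}$ and letting $l$ range over $\mathbb{Z}$. I would write this out in a sentence or two and then conclude.
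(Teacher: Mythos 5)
Your proposal is correct and matches the paper's argument: the paper likewise deduces Proposition \ref{prop:real3} directly from Lemmas \ref{lem:q}, \ref{lem:p1phi} and \ref{lem:p1}, with the same use of $\phi = \pi\circ q$ and the congruence $k^2 \equiv 0,1 \pmod 4$ to pin down the image. You have simply written out the bookkeeping the paper leaves implicit.
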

	 
\begin{nota} Denote by $\xi_r$ the $3$-dimensional real vector bundle over $\p^2$ with $p_1(\xi_r)=r\omega^2$, $r \in 4\mathbb Z + \{0,1\}$. Let $N_r=S(\xi_r)$ be the total space of the associated sphere bundle of $\xi_r$. 
\end{nota}
	
Let $p \colon \mathbb{P}\gamma_{k,l}\to \mathbb{P}^2$ be the projective bundle of $\gamma_{k,l}$. Then by Proposition \ref{prop:real3} we have $\mathbb{P}(\gamma_{0,-l}) = N_{4l}$, $\mathbb{P}(\gamma_{1,-l}) = N_{4l+1}$. 
 
 Let  $t\in H^2(\mathbb{P}\gamma_{k,l})$ be the first Chern class of the tautological line bundle $L$ over $\p(\gamma_{k,l})$, and $s=p^*x\in H^2(\mathbb{P}\gamma_{k,l})$. Then the cohomology ring $H^*(\mathbb{P}\gamma_{k,l})$ is expressed in terms of $s$, $t$, $k$ and $l$ by the Leray-Hirsch Theorem. We call the ordered pair $(s,t)$ the \emph{canonical generators} of $H^*(\mathbb{P}\gamma_{k,l})$. The characteristic classes of $\p(\gamma_{k,l})$ are computed from the identity of vector bundles $T\mathbb{P}\gamma_{k,l} \oplus \mathbb{C}\cong p^* T\mathbb{P}^2\oplus \mathrm{Hom}(L,p^*\gamma_{k,l})$. We summarize the results below.
	
	\begin{prop}\label{Hcp}
		$$H^*(\mathbb{P}\gamma_{k,l})=\mathbb{Z}[s,t]/(s^3,t^2-kts+ls^2);$$ 
		$$c_1(\mathbb{P}\gamma_{k,l})=(k-3)s-2t, \ \ p_1(\mathbb{P}\gamma_{k,l})=(k^2-4l+3)s^2.$$
		
	\end{prop}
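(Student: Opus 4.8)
The plan is to establish the ring structure via the Leray--Hirsch theorem together with the Grothendieck relation, and then read off the characteristic classes from the bundle identity stated just before the proposition.

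\textbf{Ring structure.} First I would note that over a fibre $\p^1 = \p(\mathbb{C}^2)$ of $p$ the tautological line bundle $L$ restricts to the tautological bundle of $\p^1$, so $t = c_1(L)$ restricts to a generator of $H^2(\p^1;\z)$. Since $H^*(\p^1)$ is generated by this class, the Leray--Hirsch theorem shows that $H^*(\p(\gamma_{k,l}))$ is a free $H^*(\p^2)$-module on $\{1,t\}$ and, as a ring, is generated by $t$ and $s = p^*x$. The relation $s^3 = p^*(x^3) = 0$ is immediate. For the relation in $t$, I would use the tautological short exact sequence $0 \to L \to p^*\gamma_{k,l} \to Q \to 0$ with $Q$ a line bundle; comparing total Chern classes gives $(1+t)(1+c_1(Q)) = c(p^*\gamma_{k,l}) = 1 + ks + ls^2$, whence $c_1(Q) = ks - t$ and $t\,c_1(Q) = ls^2$, that is, $t^2 - kts + ls^2 = 0$. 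Thus there is a surjective ring map $\z[s,t]/(s^3, t^2-kts+ls^2) \to H^*(\p(\gamma_{k,l}))$; since both sides are free abelian of the same rank in each degree (namely $1,2,2,1$ in degrees $0,2,4,6$), it is an isomorphism, giving the first displayed formula.

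\textbf{The Chern class.} Using the identity $T\p(\gamma_{k,l}) \oplus \mathbb{C} \cong p^*T\p^2 \oplus \mathrm{Hom}(L,p^*\gamma_{k,l})$ (the smooth splitting of the relative Euler sequence combined with the tangent sequence of $p$), I would compute $c_1$ additively. Since the hyperplane class of $\p^2$ is $-x$ one has $c_1(T\p^2) = -3x$, so $c_1(p^*T\p^2) = -3s$; and $\mathrm{Hom}(L,p^*\gamma_{k,l}) = L^*\otimes p^*\gamma_{k,l}$ has rank $2$, so its first Chern class is $2c_1(L^*) + c_1(p^*\gamma_{k,l}) = -2t + ks$. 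Adding yields $c_1(\p(\gamma_{k,l})) = (k-3)s - 2t$.

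\textbf{The Pontrjagin class.} Since $p_1$ is stable, $p_1(\p(\gamma_{k,l})) = p_1(p^*T\p^2) + p_1(L^*\otimes p^*\gamma_{k,l})$. Here $p_1(T\p^2) = c_1^2 - 2c_2 = 9x^2 - 6x^2 = 3x^2$ contributes $3s^2$. Writing the Chern roots of $p^*\gamma_{k,l}$ as $a_1,a_2$ (so $a_1+a_2 = ks$, $a_1 a_2 = ls^2$), the Chern roots of $L^*\otimes p^*\gamma_{k,l}$ are $a_1-t$ and $a_2-t$, so its second Chern class equals $(a_1-t)(a_2-t) = t^2 - kts + ls^2 = 0$ by the ring relation; hence its $p_1$ is $c_1^2 = (ks-2t)^2 = k^2 s^2 - 4kst + 4t^2$, and substituting $t^2 = kts - ls^2$ collapses this to $(k^2-4l)s^2$. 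Summing the two contributions gives $p_1(\p(\gamma_{k,l})) = (k^2-4l+3)s^2$.

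\textbf{The main obstacle.} There is no substantial difficulty here; this is essentially a bookkeeping argument. The points that need the most care are the sign conventions --- remembering that $x = c_1(O(-1))$ is the negative of the hyperplane class and that $t = c_1(L)$ rather than $c_1(L^*)$ --- and checking that the bundle identity used for the characteristic classes is correctly oriented (the relative Euler sequence and the tangent sequence of $p$ split only smoothly, not holomorphically, but that is enough for characteristic classes). Verifying that the two relations generate the full ideal reduces to comparing ranks in each degree, which is routine.
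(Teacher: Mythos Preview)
Your proposal is correct and follows exactly the approach the paper indicates: the paper itself does not give a detailed proof but simply refers to the Leray--Hirsch theorem for the ring structure and to the bundle identity $T\mathbb{P}(\gamma_{k,l})\oplus\mathbb{C}\cong p^*T\mathbb{P}^2\oplus\mathrm{Hom}(L,p^*\gamma_{k,l})$ for the characteristic classes, which is precisely what you unpack. Your sign bookkeeping (with $x=c_1(O(-1))$ and $t=c_1(L)$) and the use of the tautological exact sequence to derive the Grothendieck relation are all accurate.
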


\begin{eg}	\label{eg:1}
	 Let $\zeta_k$ be the complex vector bundle over $\p^2$ with total space 
	 $$E_r=\{(z,z')\in \mathbb{P}^2\times\mathbb{C}^3 \ | \  \sum z^k_iz'_i=0\}$$
 and bundle projection $\pi \colon E_r \to \p^2$,  $\pi(z,z')=z$.
	 One can identify $\zeta_k$ as conjugation of the orthogonal complement of canonical embedding $O(-k)\to\mathbb{C}^3$. So $\zeta_k\cong \gamma_{k,k^2}$ and $p_1(\phi( \zeta_k))=-3k^2$.  Notice that $\mathbb{P}(\zeta_1)$ is the Milnor hypersurface $\{(z,z')\in \mathbb{P}^2\times\mathbb{P}^2 \ | \  \sum z_iz'_i=0\}$. We call $M_k=\{(z,z')\in \mathbb{P}^2\times\mathbb{P}^2 \ | \  \sum z^k_iz'_i=0\}=\mathbb{P}(\zeta_k)$ a generalized Milnor hypersurface. One can easily show that the canonical generators of $M_k$ induced from the bundle $\zeta_k$ are $(\pi^* \omega,\pi'^* \omega)$. Here $\pi(z,z')=z, \pi'(z,z')=z'$. 
\end{eg}

	\subsection{homotopy groups of $N_r$}\label{subsec:hmtp}
In this subsection we study the homotopy groups of the total space $N_r=S(\xi_r)$ of the sphere bundle of the $3$-dimensional real vector bundle $\xi_r$ over $\p^2$ with $p_1(\xi_r)=r\omega^2$.  This information will be involved in the determination of the homomorphism from the Torelli group $I(N_r)$ to the homotopy Torelli group  $I^h(N_r)$. 

First of all, by elementary homotopy theory we have 
	$$\pi_1(N_r) =0, \ \pi_2(N_r)\cong H_2(N_r)\cong \mathbb{Z}\oplus\mathbb{Z}.$$ 
	Let $h \colon S^5\to\mathbb{P}^2$ be the Hopf map, $S(h^*\xi_r)$ be the sphere bundle of the pull-back vector bundle $h^*\xi_r$. Then $S(h^*\xi_r)$ is an $S^1$-bundle over $N_r$. The bundle map $S(h^*\xi_r) \to N_r$ induces an isomorphism $\pi_i(S(h^*\xi_r)) \to \pi_i(N_r)$ for $i>2$. If the pullback bundle  $h^* \xi_r$ is trivial, then $S(h^* \xi_r) \cong S^5 \times S^2$. Hence $\pi_i(N_r)\cong \pi_i(\mathbb{P}^2)\oplus \pi_i(S^2)$ for $i > 2$. In this subsection we determine for which $r$ the pull-back bundle $h^*\xi_r$ is nontrivial, and compute the homotopy groups of $N_r$ in this case. Note that there is only one isomorphism class of non-trivial $3$-dimensional real vector bundle over $S^5$, since $\pi_5(BSO(3))\cong \mathbb{Z}_2$.
		
	\begin{lemma}\label{lem:-3}
		$h^* \xi_{-3}$ is nontrivial and $h^* \xi_{1}$ is trivial. 
	\end{lemma}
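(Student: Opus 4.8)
The plan is to detect the triviality of $h^*\xi_{-3}$ by a characteristic-class obstruction living in $\pi_5(BSO(3))\cong\mathbb{Z}_2$. Since there is a unique nontrivial rank-$3$ bundle over $S^5$, it suffices to exhibit a single invariant that distinguishes it from the trivial bundle and to show this invariant is nonzero for $h^*\xi_{-3}$. The natural candidate is a secondary (mod-$2$) invariant coming from the Postnikov data of $BSO(3)$, since the primary Pontrjagin and Stiefel-Whitney classes all vanish on $S^5$ for dimensional reasons. Concretely, I would use the fact that the first nontrivial $k$-invariant of $BSO(3)$ detecting $\pi_5$ is a stable cohomology operation (a secondary operation, or equivalently the relevant piece of the $BSO(3)$ Postnikov tower built on $w_2$ and $p_1$), and that the classifying map $S^5\xrightarrow{h}\p^2\to BSO(3)$ for $h^*\xi_{-3}$ factors through $\xi_{-3}\colon\p^2\to BSO(3)$.

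The key steps, in order, are as follows. First, reduce to a statement purely about the composite $S^5\xrightarrow{h}\p^2\xrightarrow{\xi_{-3}} BSO(3)$ and the obstruction class $\theta(h^*\xi_{-3})\in\pi_5(BSO(3))\cong\mathbb{Z}_2$, which vanishes iff the bundle is trivial. Second, identify this obstruction with the value of a secondary cohomology operation. The chosen $\xi_{-3}$ has $w_2\neq 0$ and $p_1=-3x^2$, so after restricting to $\p^1\subset\p^2$ the bundle is the nontrivial one over $S^2$; I would use this to pin down the $2$-stage Postnikov data of the classifying map. The relevant mechanism is that over $\p^2$ the bundle $\xi_{-3}$ supports a factorization through the relevant Postnikov stage of $BSO(3)$, and pulling back along $h$ the secondary operation becomes evaluable because $h^*$ kills the relevant primary classes in $H^*(\p^2)$ (both $x\in H^2$ and $x^2\in H^4$ pull back to zero on $S^5$), so the secondary operation is defined and computes $\theta$. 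Third, carry out the actual evaluation: relate the secondary operation on $h^*\xi_{-3}$ to the cup-product structure $x\cdot x^2\ne 0$ in $H^*(\p^2)$ (the defining relation being that the secondary operation's indeterminacy-free value is governed by the functional cohomology operation associated to the attaching map of the top cell of $\p^2$, which is exactly $\eta^2$ up to the relevant suspension, or equivalently by the nontrivial Massey-type product), and observe that for the specific coefficient $r=-3$ the contributions conspire to be nonzero mod $2$. This is where the arithmetic hypothesis $p_1=-3x^2$ (as opposed to other values of $r$) enters. Here I expect I would want to invoke Stephen Theriault's suggested argument, mentioned in the acknowledgements, which likely reorganizes this into a cleaner homotopy-theoretic statement about the $J$-homomorphism or about the attaching maps in the relevant cell structures.

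The main obstacle will be Step three: making the secondary-operation computation rigorous and coefficient-sensitive. The subtlety is that $\theta(h^*\xi)$ as a function of $r$ is a mod-$2$ quantity, so one must track exactly which integer combinations of the Postnikov data survive the reduction, and one must verify that the indeterminacy of the secondary operation genuinely vanishes after pulling back along $h$ (this is plausible because $H^5(S^5)$ is a single $\mathbb{Z}$ and the indeterminacy lives in the image of lower operations, which are zero on a sphere, but it needs checking). An alternative, possibly cleaner route that sidesteps the secondary-operation bookkeeping is to argue geometrically: present $S^5\cup_h e^6 \simeq$ a skeleton of $\p^3$ or use that $N_{-3}$ is (for $r=-3$) the unit sphere bundle of the tangent-type bundle underlying $\p^3$'s twistor picture, and deduce nontriviality of $h^*\xi_{-3}$ from a mismatch of homotopy groups or of $w$- or $p$-data of $N_{-3}$ versus $S^5\times S^2$ — for instance by comparing $\pi_5$ or a Steenrod operation on $H^*(N_{-3};\mathbb{Z}_2)$ with that of the trivial bundle case. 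I would try the direct secondary-operation computation first, and fall back on this comparison-of-invariants approach if the indeterminacy or coefficient bookkeeping becomes unwieldy.
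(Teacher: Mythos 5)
There is a genuine gap: your plan never actually evaluates the obstruction. The entire weight of the argument rests on your Step three, where you say the contributions ``conspire to be nonzero mod $2$'' for $r=-3$, but no mechanism is exhibited by which the coefficient enters. This is not a bookkeeping issue that could be deferred: the invariant you are after depends on $r$ modulo $8$ (the paper's Proposition \ref{ntb} shows $h^*\xi_r$ is nontrivial exactly when $r\in 8\z+5$), so any secondary operation built only from $w_2(\xi_r)$ and the mod-$2$ (or mod-$4$) reduction of $p_1(\xi_r)$ cannot decide the question --- for instance $\xi_1$ and $\xi_{-3}$ have the same $w_2$, yet $h^*\xi_1$ is trivial while $h^*\xi_{-3}$ is not. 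Your proposed input data (the classes $x$, $x^2$, the attaching map $\eta$ of the top cell of $\p^2$) is therefore insufficient as stated, and the indeterminacy discussion does not repair this. The fallback you sketch of comparing ``$w$- or $p$-data'' of the pulled-back sphere bundle with $S^5\times S^2$ also fails outright, since every characteristic class of a rank-$3$ bundle over $S^5$ vanishes; the bundle is detected only homotopy-theoretically. (A comparison of homotopy groups could in principle work, e.g.\ using that $N_{-3}$ is the flag manifold $SU(3)/T^2$ so $\pi_4(N_{-3})=0$, whereas triviality of $h^*\xi_{-3}$ would force $\pi_4(N_{-3})\cong\pi_4(\p^2)\oplus\pi_4(S^2)\cong\z_2$, but you do not carry this out.)

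For comparison, the paper's proof is short and avoids all of this: since $c(T\p^2)=1-3x+3x^2$ in the paper's conventions, one has $\xi_{-3}=\phi(\gamma_{-3,3})=\phi(T\p^2)$; the map $\phi_*\colon\pi_5(BU(2))\to\pi_5(BSO(3))$ is an isomorphism (both groups are $\z_2$), so it suffices to show $h^*T\p^2$ is nontrivial; and this follows because $dh$ gives $TS^5\cong h^*T\p^2\oplus\mathbb{R}$ while $S^5$ is not parallelizable. The special role of $r=-3$ is thus geometric (it is the tangent bundle case), not the output of a delicate mod-$2$ computation.
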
  
	\begin{proof}
For $h^* \xi_{-3}$, we have identifications $\xi_{-3}=\phi (\gamma_{-3,3})=\phi (T\mathbb{P}^2)$.  The homomorphism $\phi_* \colon \pi_5(BU(2))\to \pi_5(BSO(3))$ is an isomorphism. Therefore it suffices to show that $h^* T\mathbb{P}^2$ is nontrivial. This follows from the fact that $dh$ induces an isomorphism  of real bundles $TS^5\cong h^* T\mathbb{P}^2\oplus \mathbb{R}$, and $TS^5$ is nontrivial.

For $h^* \xi_{1}$, we have identifications $\xi_{1}=\phi (\gamma_{1,0})$. Note that $\gamma_{1,0}= O(-1)\oplus \mathbb{C}$, hence $h^*\gamma_{1,0}$ is a Whitney sum of two complex line bundles over $S^5$, which is  trivial. 
	\end{proof}
	
	\begin{prop}\label{ntb}
		$h^*\xi_r$ is nontrivial if and only if $r\in 8\mathbb{Z}+5$. 
	\end{prop}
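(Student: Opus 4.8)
The plan is to understand the map $r \mapsto h^*\xi_r$ as a function on the set $4\mathbb Z + \{0,1\}$ of Pontrjagin classes, using the action of $\pi_4(BSO(3))$ on $[\mathbb P^2, BSO(3)]$ that already appeared in the proof of Lemma \ref{lem:p1}. Recall that for $f \colon \mathbb P^2 \to BSO(3)$ and $\alpha \in \pi_4(BSO(3)) \cong \mathbb Z$, the pinch map $\mu \colon \mathbb P^2 \to \mathbb P^2 \vee S^4$ yields $\mu^*(\alpha, f)$ with $p_1(\mu^*(\alpha,f)) = p_1(f) + p_1(\alpha)$, and $p_1 \colon \pi_4(BSO(3)) \to H^4(S^4)$ has image $4\mathbb Z$. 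Thus, starting from $\xi_0$ (the trivial bundle) and $\xi_1$, every $\xi_r$ is obtained by acting with a suitable multiple of a generator $\alpha_0 \in \pi_4(BSO(3))$ with $p_1(\alpha_0) = 4$; concretely $\xi_{4m} = \mu^*(m\alpha_0, \xi_0)$ and $\xi_{4m+1} = \mu^*(m\alpha_0, \xi_1)$. Now I would restrict along the Hopf map: since $h \colon S^5 \to \mathbb P^2$ factors through the bottom cell in a way compatible with the pinch map, pulling back $\mu^*(m\alpha_0, f)$ along $h$ gives $h^*f$ modified by the composite $S^5 \xrightarrow{h} \mathbb P^2 \xrightarrow{} S^4 \xrightarrow{m\alpha_0} BSO(3)$, i.e. by the image of $m\alpha_0 \circ \eta$ under $\pi_5(S^4) \to \pi_5(BSO(3))$, where $\eta$ is the suspension of the Hopf class generating $\pi_5(S^4) \cong \mathbb Z_2$.

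The key computation is therefore the homomorphism $\pi_4(BSO(3)) \xrightarrow{-\circ\eta} \pi_5(BSO(3)) \cong \mathbb Z_2$, i.e. whether the generator $\alpha_0$ of $\pi_4(BSO(3)) \cong \mathbb Z$, precomposed with $\eta \in \pi_5(S^4)$, is trivial or not in $\pi_5(BSO(3))$. This is where Professor Theriault's input is presumably used. I expect the answer to be that $\alpha_0 \circ \eta$ is the nonzero element: one can see this by relating $\alpha_0$ to the generator of $\pi_4(BSO(3))$ coming from $\pi_3(SO(3)) \cong \mathbb Z$ and tracking $\eta$-multiplication through the known stable/unstable homotopy of $BSO(3)$ (or of $S^3$, using the fibration $SO(3) \to SO(4) \to S^3$ and $S^2 \to BSO(2) \to BSO(3)$-type comparisons, together with $\eta^2 \neq 0$ in the relevant range). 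Granting this, the map $r \mapsto h^*\xi_r$ only depends on the parity of $m$ where $r = 4m + \epsilon$: one gets $h^*\xi_{4m+\epsilon}$ nontrivial iff $h^*\xi_{\epsilon}$ nontrivial XOR ($m$ odd).

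To finish, I would pin down the two base cases and the offset. Clearly $h^*\xi_0$ is trivial. Lemma \ref{lem:-3} gives that $h^*\xi_{-3}$ is nontrivial; since $-3 = 4\cdot(-1) + 1$, this is the case $\epsilon = 1$, $m = -1$ (odd), so nontriviality of $h^*\xi_{-3}$ forces $h^*\xi_1$ to be trivial (nontrivial XOR odd $=$ trivial would be wrong — rather: nontrivial $=$ (value at $\epsilon=1$) XOR (odd), hence value at $\epsilon = 1$ is trivial). Then $h^*\xi_r$ is nontrivial exactly when $r = 4m+\epsilon$ with the "$m$ odd" contribution flipping triviality: for $\epsilon = 0$ this needs $m$ odd, i.e. $r \in 8\mathbb Z + 4$; for $\epsilon = 1$ this needs $m$ odd, i.e. $r \in 8\mathbb Z + 5$. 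But $r \in 8\mathbb Z + 4$ is impossible since then $p_1 = r x^2$ with $r \equiv 4 \pmod 8$, which does lie in $4\mathbb Z$ — so I need to be more careful: the allowed values are $r \in 4\mathbb Z \cup (4\mathbb Z + 1)$, and among these the nontrivial ones are precisely $r \equiv 4 \pmod 8$ and $r \equiv 5 \pmod 8$. The statement of the proposition asserts only $r \in 8\mathbb Z + 5$, so I must reconcile this: either the case $r \equiv 4 \pmod 8$ does not actually occur among bundles pulled back nontrivially (perhaps because $\alpha_0 \circ \eta = 0$ but a different generator is relevant), or the offset analysis shows the flip happens only in the $\epsilon = 1$ family. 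The main obstacle is thus the precise identification of the $\eta$-multiplication homomorphism $\pi_4(BSO(3)) \to \pi_5(BSO(3))$ together with getting the base cases $h^*\xi_0$, $h^*\xi_1$ exactly right; once that single $\mathbb Z_2$-valued computation is nailed down, everything else is bookkeeping with the pinch-map action and the two known values at $r = 0$ and $r = -3$.
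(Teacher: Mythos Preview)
Your strategy---use the pinch-map action of $\pi_4(BSO(3))$ on $[\mathbb P^2, BSO(3)]$ to reduce $h^*\xi_{4m+\epsilon}$ to $h^*\xi_\epsilon$ plus a correction, then pin down constants from the known values at $r=0$ and $r=-3$---is exactly the paper's. The gap is in your identification of the correction term.

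You assert that pulling $\mu^*(m\alpha_0, \xi_\epsilon)$ back along $h$ yields $h^*\xi_\epsilon$ modified by $m\alpha_0 \circ \eta$, where $\eta$ is supposed to be the composite $S^5 \xrightarrow{h} \mathbb P^2 \to S^4$. Both ingredients of this fail. First, that composite is \emph{zero} in $\pi_5(S^4)$, not $\eta$: the paper checks this by Thom--Pontrjagin (the framed circle $h^{-1}(\mathrm{pt}) \subset S^5$ carries the standard framing twisted by an even element of $\pi_1(U(2))$, hence is null), and one can also see it from the vanishing of $Sq^2$ on $H^4(\mathbb P^3/\mathbb P^1;\mathbb Z_2)$. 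Second, and more fundamentally, $\pi_5(\mathbb P^2 \vee S^4)$ is not just $\pi_5(\mathbb P^2)\oplus\pi_5(S^4)$: there is an extra $\mathbb Z$ summand generated by the Whitehead product $u \colon S^5 \to S^2 \vee S^4 \hookrightarrow \mathbb P^2 \vee S^4$, and $\mu\circ h$ has a nonzero coefficient $k$ there. The actual correction is $k\cdot[\xi_\epsilon|_{\mathbb P^1},\, m\alpha_0] \in \pi_5(BSO(3))$, a Whitehead product bilinear in $(w_2(\xi_\epsilon),\,m\alpha_0)$---in particular identically zero when $\epsilon = 0$.

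This is precisely what dissolves the inconsistency you flagged. For $r \in 4\mathbb Z$ the Whitehead term vanishes since $w_2(\xi_r) = 0$, so $h^*\xi_{4m} = h^*\xi_0 = 0$ for every $m$ and nothing happens at $r \equiv 4 \pmod 8$. For $r \in 4\mathbb Z + 1$ the term contributes $m \bmod 2$; the paper shows this pairing is nonzero by combining Lemma~\ref{lem:-3} with an independent verification that $h^*\xi_1 = 0$ (using $\xi_1 = \phi(O(-1)\oplus\mathbb C)$, whose pullback to $S^5$ is $\phi$ of a sum of line bundles, hence trivial). Your computation that $\alpha_0 \circ \eta \ne 0$ in $\pi_5(BSO(3))$ is correct but irrelevant---that term simply does not occur in the formula.
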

	\begin{proof}
Notice that the pair $(\mathbb{P}^2\times S^4, \mathbb{P}^2\vee S^4)$ is $5$-connected,  thus we have a short exact sequence
$$0 \to \z \stackrel{u}{\longrightarrow} \pi_5(\mathbb{P}^2\vee S^4) \stackrel{i}{\longrightarrow} \pi_5(\mathbb{P}^2\times S^4) \to 0,$$
with a splitting 
$$(i_1+i_2) \circ (p_1,p_2) \colon \pi_5(\mathbb{P}^2\times S^4) \to 	\pi_5(\mathbb{P}^2)\oplus\pi_5(S^4) \to \pi_5(\mathbb{P}^2\vee S^4).$$
Here the maps $i$, $i_1$, $i_2$, $p_1$, $p_2$ are the ordinary inclusions and projections. The map $u$ sends $1 \in \z$ to the composite $S^5 \to S^2 \vee S^4 = \p^1 \vee S^4 \to \p^2 \vee S^4$, in which the first map is the Whitehead product and the last map is the canonical inclusion.

Consider the map $F=\mu\circ h: S^5\to \mathbb{P}^2\vee S^4$, where $\mu \colon \p^2 \to \p^2 \vee S^4$ is the pinch map. Regard $F$ as an element in $\pi_5(\p^2 \vee S^4)$, then by the above splitting exact sequence, $F$ is expressed as  $F=i_1(F_1)+i_2(F_2)+ku$. By the constructions, $F_1=p_1(F)=p_1\circ\mu\circ h=h$. By Thom-Pontrjagin construction, $F_2=p_2\circ\mu\circ h$ is characterized by the framed submanifold $h^{-1}([1:0:0])\subset S^5$. This framed submanifold can be view as $h(t,z_1,z_2)=[1:t^{-1}z_1:t^{-1}z_2]:S^1\times \mathbb{C}^2\to \mathbb{P}^2$ (Here we view $S^5\subset\mathbb{C}^3$ and idetify $S^1\times \mathbb{C}^2\subset \mathbb{C}^3$ as a framing of the normal bundle of $S^1\subset S^5$ in the obvious way), which is the ordinary normal framing of $S^1\subset S^5$ twisted  by twice of a generator of $\pi_1(U(2))$. Since $\pi_1(SO(4))\cong \mathbb{Z}_2$ the framing is trivial and $F_2=0$. Thus $F=i_1\circ h+ku$.

		Consider the induced map 
		$$F^* \colon [\p^2 \vee S^4 , BSO(3)]=[\mathbb{P}^2,BSO(3)]\times\pi_4(BSO(3))\to \pi_5(BSO(3))\cong \mathbb{Z}_2.$$ 
		By construction we have $F^*(\xi_r,\alpha)=h^*\xi_r+k\cdot(\xi_r, \alpha)\circ u$. The Whitehead product is bilinear, and $3$-dimensional real vector bundles over $\p^1$ are classified by the second Stiefel-Whitney class $w_2$; consequently $(\xi_r,\alpha)\circ u$ is bilinear in $(w_2(\xi_r),\alpha)$. 
		
The assumption $k \cdot (\xi_r,\alpha)\circ u=0$ for all $(\xi_r,\alpha)\in [\mathbb{P}^2,BSO(3)]\times\pi_4(BSO(3))$ will lead to a contradiction as shown in the following. Let $\beta\in \pi_4(BSO(3))$ be the generator with $\langle p_1(\beta),[S^4]\rangle=4$. Then by construction we have
$$F^*(\xi_1, \beta) = h^* \mu^*(\xi_1, \beta) = h^*\xi_{-3}$$
since $p_1(\xi_1)=\omega^2 = p_1(\xi_{-3}) + 4\omega^2$.
On the other hand, by the assumption, $F^*(\xi_1,\beta)=h^* \xi_1$. A contradiction (Lemma \ref{lem:-3}). 
		
	Now	since $k(\xi_r,\alpha)\circ u$ is not alway trivial and bilinear in $(w_2(\xi_r),\alpha)$, it must have the form $k(\xi_r, n\beta)\circ u=nw_2(\xi_r)$. So $h^*\xi_{4l}=F^* (\xi_0,l\beta)=h^* \xi_0$ is trivial, and $h^* \xi_{1+4l}=F^* (\xi_1,l\beta)=h^* \xi_1+l=l \in \pi_5(BSO(3)$. This completes the proof.
	\end{proof}

	\begin{lemma}\label{bh}
Let $v$ be the nontrivial $3$-dimensional real vector bundle over $S^5$, $\partial \colon \pi_{i+1}(S^5)\to \pi_i(S^2)$ be the boundary homomorphism in the long exact sequence of homotopy groups associated to the sphere bundle of $v$, $\Sigma:\pi_i(S^4) \to \pi_i(\Omega S^5)$ be the map induced by the adjoint of the identity map $S^5= \Sigma S^4 \to S^5$. Then the composite $\partial\circ\Sigma:\pi_i(S^4) \to \pi_i(\Omega S^5) = \pi_{i+1}(S^5) \to \pi_i(S^2)$ is identified with the homomorphism $\eta^2 \colon \pi_i(S^4) \to \pi_i(S^2)$ induced by the Hopf map $\eta \colon S^3 \to S^2$. 
		
	\end{lemma}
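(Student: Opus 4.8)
The plan is to unwind the definition of the boundary homomorphism $\partial$ in terms of a clutching construction, and then identify the resulting attaching data with the Hopf map $\eta$. Recall that the nontrivial rank-$3$ real bundle $\xi$ over $S^5$ has sphere bundle $S(\xi)$ classified by the generator of $\pi_5(BSO(3)) \cong \pi_4(SO(3))\cong \z_2$. Since $SO(3)\simeq PU(2)$, this clutching function $S^4 \to SO(3)$ lifts along $SO(3)\to SO(3)$ in a way detected by the composite $S^4 \to SO(3) = \mathrm{Diff}^+(S^2)$, and the induced action on the fiber $S^2$ is what computes $\partial$. Concretely, for an $S^2$-bundle over $S^5$ with clutching map $c\colon S^4 \to SO(3)$, the boundary map $\partial\colon \pi_{i+1}(S^5)\to\pi_i(S^2)$ factors as $\pi_{i+1}(S^5)=\pi_i(\Omega S^5)\to \pi_i(SO(3))\to \pi_i(S^2)$, where the first arrow is $(\Omega[\text{clutching}])$ applied after adjointing, and the second is evaluation at the basepoint of $S^2$, i.e. the map $SO(3)\to S^2$ of the Hopf-type fibration $SO(2)\to SO(3)\to S^2$. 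So the first step is to write $\partial$ as the composite $\pi_i(S^4)\to\pi_i(SO(3))\to\pi_i(S^2)$, where $\pi_i(S^4)\to\pi_i(SO(3))$ is induced by the (non-null) clutching map $S^4\to SO(3)$.

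The second step is to identify the evaluation fibration $SO(2)\to SO(3)\to S^2$ with the usual map whose boundary (or rather the associated $\pi_i(SO(3))\to\pi_i(S^2)$ read against $\pi_i(SO(2))=0$ for $i\ge 2$) sends a class to its Hopf-invariant-type image. Passing to the double cover $S^3 = SU(2)\to SO(3)$, the evaluation fibration pulls back to the Hopf fibration $S^1\to S^3\to S^2$, and the generator of $\pi_5(BSO(3))$ pulls back (via the isomorphism $\phi_*\colon\pi_5(BU(2))\to\pi_5(BSO(3))$ of Lemma \ref{lem:-3}) to data coming from $\pi_4(SU(2))=\pi_4(S^3)\cong\z_2$, generated by $\eta_3$, the suspension of $\eta$. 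Thus the composite $\pi_i(S^4)\to\pi_i(SU(2))=\pi_i(S^3)\to\pi_i(S^2)$ is precisely $\eta_\ast$ applied after the map classified by a generator of $\pi_4(S^3)$, i.e. it is $\pi_i(S^4)\xrightarrow{(\Sigma\eta)_\ast}\pi_i(S^3)\xrightarrow{\eta_\ast}\pi_i(S^2)$, which is $\eta^2$ by definition of composition product.

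The third step is bookkeeping: check that the map $\pi_i(S^4)\to\pi_i(\Omega S^5)=\pi_{i+1}(S^5)$ induced by adjointing $\mathrm{id}_{S^5}\colon\Sigma S^4\to S^5$ is exactly the identification under which the clutching-map description of $\partial$ becomes the stated composite. This is the standard fact that for a bundle over a suspension $\Sigma X$ with clutching map $X\to G$, the connecting map $\pi_{i+1}(\Sigma X)\to\pi_i(F)$ is $\pi_{i+1}(\Sigma X)\cong\pi_i(\Omega\Sigma X)\xrightarrow{\eta_{X\ast}}\pi_i(X)\to\pi_i(G)\to\pi_i(F)$, where $\eta_X\colon\Omega\Sigma X\to\dots$—more simply, one uses that the classifying map $S^5\to BSO(3)$ of $\xi$ is the suspension of the clutching map $S^4\to SO(3)$, and naturality of the homotopy boundary under $\Omega$ of this.

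The main obstacle I expect is the second step: pinning down exactly which element of $\pi_4(SU(2))\cong\z_2$ the nontrivial bundle corresponds to, and verifying it is the nonzero one (equivalently, that $\eta^2$ and not $0$ is obtained). This requires care because $\pi_4(BSO(3))\to\pi_5(BSO(3))$ is the zero map while $\pi_5(BSO(3))\cong\z_2$ is generated by a class \emph{not} in the image of suspension from $\pi_4(SO(3))$ in the naive sense—so one must correctly trace the generator of $\pi_5(BSO(3))=\pi_4(SO(3))$ through $\phi_*^{-1}$ to $\pi_4(PU(2))=\pi_4(SU(2))/\!\sim$ and see it hits the generator $\eta_3$ of $\pi_4(S^3)$. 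Granting that, the rest is formal manipulation of the homotopy exact sequence of the fibration $S^1\to S^3\to S^2$ together with the composition-product identity $\eta\circ\Sigma\eta=\eta^2$.
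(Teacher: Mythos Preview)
Your approach is essentially the same as the paper's: both identify the composite $\pi_i(S^4)\to\pi_{i+1}(S^5)\to\pi_i(S^2)$ with the map induced by $\mathrm{ev}\circ C_\xi\colon S^4\to SO(3)\to S^2$, where $C_\xi$ is the clutching function and $\mathrm{ev}$ is evaluation at the basepoint. The paper packages the first and third steps into a single commutative diagram comparing the principal $SO(3)$-bundle $E$ with the sphere bundle $S(\xi)$, then simply asserts $\mathrm{ev}\circ C_\xi=\eta^2$; you supply the missing justification for this last fact via the double cover $S^3\to SO(3)$, which is the right way to see it. Your ``main obstacle'' is less subtle than you fear: since $\pi_4(SO(3))\cong\pi_4(S^3)\cong\z_2$ via the covering map, the nontrivial clutching function automatically corresponds to the generator $\Sigma\eta$, and no tracing through $\phi_*$ is needed.
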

	\begin{proof}
		Let $Fr(v)$ be the oriented orthogonal frame bundle of $v$. Then the boundary homomorphism associated to the principal $SO(3)$-bundle $Fr(v)$, $\partial_{Fr} \colon \pi_5(S^5)\to \pi_4(SO(3))$ maps $[\mathrm{id}]$ to the clutching function $C_v$ of $v$. There is a commutative diagram
		$$\xymatrix{S^4\ar[d]\ar[dr]^{C_v}& & & \\
			\Omega S^5\ar[d]^=\ar[r] &SO(3)\ar[d]^{\mathrm{ev}}\ar[r]& Fr(v)\ar[d]^{\mathrm{ev}_{Fr}}\ar[r]& S^5\ar[d]^=\\
			\Omega S^5\ar[r] &S^2\ar[r]&S(v)\ar[r]&S^5}$$
where $\mathrm{ev}$ and $\mathrm{ev}_{Fr}$ are the corresponding evaluation maps.
The statement of the lemma follows from the fact that  $\mathrm{ev}\circ C_v=\eta^2$. 
	
	\end{proof}
	
	\begin{cor}\label{htg}
Let $N_r$ be the $S^2$-bundle over $ \mathbb{P}^2$ defined as above. Assume $r\in 8\mathbb{Z}+5$. Then
\begin{enumerate}
\item the inclusion of a fiber induces an isomorphism $\pi_3(S^2)\to\pi_3(N_r)$;	
\item$\pi_4(N_r)=0$; 
\item the bundle projection induces an injective homomorphism $\pi_5(N_r)\to \pi_5(\mathbb{P}^2)$ with image $2\pi_5(\mathbb{P}^2)$.
\item $\pi_6(S^2)\to\pi_6(N_r)$ is surjective and  $\pi_6(N_r)\cong \mathbb{Z}_6$.
\end{enumerate}
	\end{cor}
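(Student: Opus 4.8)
The plan is to reduce the computation to the homotopy groups of the total space $S(\xi)$ of the $S^2$-bundle over $S^5$ associated to the unique nontrivial $3$-dimensional real vector bundle $\xi$, and then to run the long exact homotopy sequence of that bundle, evaluating the boundary maps by means of Lemma \ref{bh}. By Proposition \ref{ntb}, when $r\in 8\z+5$ the pull-back $h^*\xi_r$ is isomorphic to $\xi$, so $S(h^*\xi_r)\cong S(\xi)$ as bundles over $S^5$; since $S(h^*\xi_r)\to N_r$ is an $S^1$-bundle it induces isomorphisms $\pi_i(S(\xi))\xrightarrow{\cong}\pi_i(N_r)$ for $i\ge 3$, carrying the fibre $S^2\subset S(\xi)$ to a fibre $S^2\subset N_r$. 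So I only need $\pi_i(S(\xi))$ for $3\le i\le 6$, together with the maps induced by the fibre inclusion and by the bundle projection $S(\xi)\to S^5$; for part (3) I would then transport the latter along $h$, using that $h_*\colon\pi_5(S^5)\to\pi_5(\p^2)$ is an isomorphism (immediate from the Hopf fibration $S^1\to S^5\to\p^2$).

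Next I would write the long exact sequence of $S^2\to S(\xi)\to S^5$ and identify the boundary map $\partial\colon\pi_{i+1}(S^5)\to\pi_i(S^2)$. By the Freudenthal suspension theorem, $E\colon\pi_i(S^4)\to\pi_{i+1}(S^5)$ is an isomorphism for $i\le 6$, so Lemma \ref{bh} identifies $\partial$ (for $i\le 6$) with the homomorphism induced by the composite of Hopf maps $\eta^2\colon S^4\to S^3\to S^2$. Plugging in the standard groups $\pi_3(S^2)=\z$, $\pi_4(S^2)=\pi_5(S^2)=\z_2$, $\pi_6(S^2)=\z_{12}$, $\pi_4(S^4)=\z$, $\pi_5(S^4)=\pi_6(S^4)=\z_2$, $\pi_3(S^5)=\pi_4(S^5)=0$, $\pi_5(S^5)=\z$, $\pi_6(S^5)=\pi_7(S^5)=\z_2$, I would evaluate $\partial$ on generators: $\partial\colon\pi_5(S^5)\to\pi_4(S^2)$ hits the generator $\eta^2$, so is onto with kernel $2\cdot\pi_5(S^5)$; $\partial\colon\pi_6(S^5)\to\pi_5(S^2)$ hits the triple Hopf composite $\eta^3$, so is an isomorphism; and $\partial\colon\pi_7(S^5)\to\pi_6(S^2)$ hits the fourfold Hopf composite, which corresponds under the Hopf isomorphism $\pi_6(S^3)\cong\pi_6(S^2)$ to $\eta_3^3\in\z_{12}$.

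From here the four assertions come out of a routine diagram chase. Vanishing of $\pi_3(S^5)$ and $\pi_4(S^5)$ gives $\pi_3(S^2)\xrightarrow{\cong}\pi_3(S(\xi))\xrightarrow{\cong}\pi_3(N_r)$, which is (1). Surjectivity of $\partial\colon\pi_5(S^5)\to\pi_4(S^2)$ gives $\pi_4(S(\xi))=0$, hence $\pi_4(N_r)=0$, which is (2). For (3), that same surjectivity together with the fact that $\pi_5(S^2)\to\pi_5(S(\xi))$ vanishes (since $\partial\colon\pi_6(S^5)\to\pi_5(S^2)$ is onto) makes $\pi_5(S(\xi))\to\pi_5(S^5)$ injective with image $2\cdot\pi_5(S^5)$; transporting through the $S^1$-bundle isomorphism and the commuting square over $h$ gives (3). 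For (4), injectivity of $\partial\colon\pi_6(S^5)\to\pi_5(S^2)$ makes the fibre inclusion $\pi_6(S^2)\to\pi_6(S(\xi))$ onto, so $\pi_6(N_r)\cong\pi_6(S(\xi))\cong\z_{12}/\langle\eta_3^3\rangle$.

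The main obstacle — and the only point where I would need real input about unstable homotopy groups of spheres — is the last boundary evaluation: I must show the fourfold Hopf composite, i.e.\ $\eta_3^3\in\pi_6(S^3)\cong\z_{12}$, is exactly the element of order $2$. It is nonzero because $\eta^3$ is nonzero stably (it equals $12\nu$ in $\pi_3^s\cong\z_{24}$), which forces $\eta_4^3\ne 0$ in $\pi_7(S^4)$ and hence $\eta_3^3\ne 0$; and it has order dividing $2$ because $2\cdot\eta_3^2=0$ in $\pi_5(S^3)\cong\z_2$. Together these identify $\eta_3^3$ with $6\nu'$, which is precisely what makes $\z_{12}/\langle\eta_3^3\rangle\cong\z_6$ rather than $\z_{12}$ or $\z_4$; getting this factor wrong is the obvious pitfall.
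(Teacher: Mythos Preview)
Your proposal is correct and follows essentially the same route as the paper: both arguments pull back along the Hopf map $h$ (using Proposition \ref{ntb}) to reduce to the long exact sequence of $S^2\to S(\xi)\to S^5$, identify the boundary via Lemma \ref{bh} with composition by $\eta^2$, and read off the four statements from the known unstable homotopy groups of spheres, the only nontrivial input being that $\eta^4\in\pi_6(S^2)\cong\z_{12}$ is the unique element of order $2$. Your justification of this last point (nonvanishing via the stable relation $\eta^3=12\nu$ in $\pi_3^s$, order dividing $2$ via $2\eta^2=0$ in $\pi_5(S^3)$) is in fact more careful than the paper's one-line remark.
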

	
	\begin{proof}
By proposition \ref{ntb}, when $r \in 8\z +5$, the bundle $h^*\xi_r$ is non-trivial. By Lemma \ref{bh} and known facts about the homotopy groups of spheres \cite{Hat}, the long exact sequence of homotopy groups associate to the fiber bundle $S^2 \to N_r \to \p^2$ reduces to the following  long exact sequence
		$$\xymatrix{\z_2\{\eta^2\}\ar[r]^{\eta^2_*}&\pi_6(S^2)\ar[r]&\pi_6(N_r)\ar[r]&\z_2\{\eta\}\ar[r]^{\eta^2_*}&\z_2\{\eta^3\}\ar[r]&\\ \pi_5(N_r)\ar[r]&\z\{id\}\ar[r]^{\eta^2_*}&\z_2\{\eta^2\}\ar[r]&\pi_4(N_r)\ar[r]&0\ar[r]&\\
		\z\{\eta\}\ar[r]&\pi_3(N_r)\ar[r]&0&&&&}$$
		
Here the non-triviality of $\eta^4\in \pi_6(S^2)\cong \mathbb{Z}_{12}$ come from the non-triviality of $\eta^3\in \pi_6(S^3)$ although it is stably trivial.
		
	\end{proof}
	
\section{The action of $\m(N_r)$ on homology groups}\label{sec:action}
In this section we study the action of a diffeomorphism of $\p\gamma_{k,l}$ (or equivalently $N_r$)	on its cohomology ring. 	
Recall that $H^*(\mathbb{P}\gamma_{k,l})$ is generate by $H^2(\mathbb{P}\gamma_{k,l})=\mathbb{Z}\{s,t\}$ (Proposition \ref{Hcp}). Therefore the action of $\m(\p\gamma_{k,l})$ on $H^*(\mathbb{P}\gamma_{k,l})$ is  determined by the representation 
$$R_{k,l} \colon \m(\mathbb{P}\gamma_{k,l}) \to GL_2(\mathbb{Z}), \ \ (s,t)R_{k,l}(f) = f^*(s,t),$$
and the Torelli group $I(\mathbb{P}\gamma_{k,l})$ is equal to the kernel of $R$. 

Let $A_{k,l}$ and $A^+_{k,l}$ be the image of $R_{k,l}$ and the image of $MCG^+(\mathbb{P}\gamma_{k,l})$ under $R_{k,l}$. In this section we determine $A_{k,l}$ and $A^+_{k,l}$, which provides the short exact sequence in Theorem \ref{Mth}.

We first construct diffeomorphisms which acts non-trivially on cohomology. 		Let $g:\mathbb{P}^2\to \mathbb{P}^2$ be the complex conjugation map. Since $g^*$ maps the Chern classes of $\gamma_{k,l}$ to the Chern classes of $\gamma_{-k,l}$, Lemma \ref{lem:chern} implies that $g^*\gamma_{k,l}$ is isomorphic to  $\gamma_{-k,l}$. By Proposition \ref{prop:real3} the projective bundles $\mathbb{P}\gamma_{k,l}$ and $ \mathbb{P}\gamma_{-k,l}$ are isomorphic. Choosing a bundle isomorphism $f \colon \mathbb{P}\gamma_{k,l}\to \mathbb{P}\gamma_{-k,l}$ covering $\mathrm{id}$, we have the following diagram
		$$\xymatrix{\mathbb{P}\gamma_{k,l}\ar[r]^f\ar[d]&\mathbb{P}\gamma_{-k,l}\ar[r]^{\hat{g}}\ar[d]&\mathbb{P}\gamma_{k,l}\ar[d]\\
			\mathbb{P}^2\ar[r]^{\mathrm{id}}&\mathbb{P}^2\ar[r]^{g}&\mathbb{P}^2}$$
Let $f_1=\hat{g}\circ f \colon \mathbb{P}\gamma_{k,l} \to \mathbb{P}\gamma_{k,l}$.

	\begin{lemma}
		For the diffeomorphism $f_1 \colon \mathbb{P}\gamma_{k,l} \to \mathbb{P}\gamma_{k,l}$, we have
		$$R_{k,l}(f_1)=X_k=\begin{pmatrix}-1&-k\\0&1\end{pmatrix}.$$
	\end{lemma}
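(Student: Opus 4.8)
The plan is to track what the two maps $f$ and $\hat g$ do to the canonical generators $(s,t)$, then compose. The pair $(s,t)$ consists of $s = p^*x$, the pullback of the hyperplane class of $\mathbb P^2$, and $t = c_1(L)$, the first Chern class of the tautological line bundle on $\mathbb P(\gamma_{k,l})$. First I would analyze $\hat g$. Since $\hat g$ covers the complex conjugation $g \colon \mathbb P^2 \to \mathbb P^2$ and $g^* x = -x$ (conjugation reverses the natural orientation on the generator of $H^2(\mathbb P^2)$), we get $\hat g^* s = \hat g^*(p^*x) = p^*(g^* x) = -s$. For the tautological line bundle: $\hat g$ is built from the bundle isomorphism $g^*\gamma_{k,l} \cong \gamma_{-k,l}$, and under $\hat g$ the tautological subbundle of $\mathbb P(\gamma_{-k,l})$ pulls back to the tautological subbundle of $\mathbb P(\gamma_{k,l})$, so $\hat g^* t = t$ as well — more precisely, one checks $\hat g^*$ carries the canonical generators of $H^*(\mathbb P(\gamma_{k,l}))$ to the canonical generators of $H^*(\mathbb P(\gamma_{-k,l}))$ composed with $g^*$ on the base, giving $\hat g^*(s,t) = (-s, t)$.

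Next I would analyze $f \colon \mathbb P(\gamma_{k,l}) \to \mathbb P(\gamma_{-k,l})$, a bundle isomorphism covering the identity. Such an $f$ need not respect the tautological line bundles on the nose, because the isomorphism $\gamma_{k,l} \cong \gamma_{-k,l}$ is only an abstract one (these are different bundles with the same projectivization). The key point is to compute $f^*$ of the canonical generators of $H^*(\mathbb P(\gamma_{-k,l}))$ in terms of those of $H^*(\mathbb P(\gamma_{k,l}))$. Writing $(s',t')$ for the canonical generators of $H^*(\mathbb P(\gamma_{-k,l}))$, clearly $f^* s' = s$ since $f$ covers the identity. For $t'$: both $H^*$ rings are free over $H^*(\mathbb P^2)$ on $1$ and the respective $t$-classes, so $f^* t' = t + c s$ for some integer $c$, which is pinned down by the defining relations $t^2 - kts + ls^2 = 0$ and $t'^2 + kt's + ls^2 = 0$. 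Substituting $t' = t + cs$ into the second relation and reducing modulo the first forces $2c + 2k \equiv 0$ hence $c = -k$ (using $s^3 = 0$ and comparing coefficients of $ts$ and $s^2$). So $f^*(s',t') = (s, t - ks)$.

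Finally I would compose: $f_1^* = f^* \circ \hat g^*$. Applying $\hat g^*$ first sends $(s,t)$ to $(-s, t)$ as generators of $H^*(\mathbb P(\gamma_{-k,l}))$ — but one must be careful: $\hat g^* s = -s'$ and $\hat g^* t = t'$ where $(s',t')$ are the canonical generators of the \emph{middle} copy $\mathbb P(\gamma_{-k,l})$. Then $f^*$ sends $s' \mapsto s$, $t' \mapsto t - ks$. Hence $f_1^* s = f^*(-s') = -s$ and $f_1^* t = f^*(t') = t - ks$, i.e. $f_1^*(s,t) = (-s, t - ks)$. In the matrix convention $(s,t)R(f_1) = f_1^*(s,t)$ this reads $(s,t)\begin{pmatrix} -1 & -k \\ 0 & 1 \end{pmatrix} = (-s, -ks + t)$, which is exactly $X_k$.

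The main obstacle is the computation of the integer $c$ in $f^* t' = t + cs$: one must verify carefully that compatibility of $f^*$ with the two quadratic relations (and the vanishing $s^3 = 0$) rigidly determines $c = -k$, rather than leaving a family of possibilities, and that the sign conventions for $g^*x = -x$ and for the orientation of $t$ are consistent throughout. Everything else is bookkeeping with the Leray–Hirsch description of the cohomology ring from Proposition \ref{Hcp}.
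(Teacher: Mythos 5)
Your handling of $\hat g$ and of the composition agrees with the paper: $\hat g^*(s)=-s'$, $\hat g^*(t)=t'$, $f^*(s')=s$, and $f_1^*=f^*\circ\hat g^*$, so everything hinges on computing $f^*(t')$. That is exactly where your argument has a genuine gap, at the step you yourself flag as the main obstacle. Leray--Hirsch freeness only gives $f^*(t')=at+cs$ with $a\in\mathbb{Z}$, and the fact that $f^*$ is a ring isomorphism fixing $s$ gives $a=\pm1$; but compatibility with the two quadratic relations does \emph{not} then force $(a,c)=(1,-k)$. Substituting $f^*(t')=at+cs$ into $t'^2+kt's'+ls'^2=0$ and reducing modulo $t^2=kts-ls^2$ and $s^3=0$ gives $a^2k+2ac+ka=0$ and $c^2+kc+l(1-a^2)=0$, which has the solution $(a,c)=(1,-k)$ you want, but also $(a,c)=(-1,0)$: the assignment $s'\mapsto s$, $t'\mapsto -t$ is a perfectly good ring isomorphism. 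So the claimed rigidity fails, and you need one extra geometric input to rule out $a=-1$. The cheapest fix: $f$ is an isomorphism of $PU(2)$- (equivalently $SO(3)$-) bundles, hence restricts to each fiber as a projective-linear, in particular orientation-preserving, map; since $t'$ and $t$ restrict to the corresponding generators of $H^2$ of a fiber, this forces $a=+1$. (Alternatively, $f$ is orientation-preserving on the total space, so $f^*(t's'^2)=ts^2$, which again forces $a=+1$.) With that supplied, your computation $c=-k$ and the final matrix bookkeeping $f_1^*(s,t)=(-s,t-ks)=(s,t)X_k$ are correct.

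For comparison, the paper bypasses the relation-chasing entirely: since $f$ respects the fiberwise complex structures, it preserves the first Chern class of the total space, and the single equation $f^*((-k-3)s'-2t')=(k-3)s-2t$, together with $f^*(s')=s$, yields $f^*(t')=t-ks$ in one line, fixing the sign of the $t$-coefficient and the value of $c$ simultaneously. Your route is viable and stays closer to the ring structure of Proposition \ref{Hcp}, but as written it needs the extra orientation (or $c_1$) input to be complete.
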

	\begin{proof}
			Let $\{s,t\}$ and  $\{s',t'\}$ be the canonical basis (provided by the Leray-Hirsch Theorem) of $H^2(\mathbb{P}\gamma_{k,l})$ and $H^2(\mathbb{P}\gamma_{-k,l})$, respectively. By construction, $f^\star (s')=s, \hat{g}^*(s)=-s' $. Since $\hat{g}$ is covered by a complex vector bundle isomorphism we have $\hat{g}^\star(t)=t'$. Since $f$ preserves the almost complex structures of the total spaces we have 
$$f^\star ((-k-3)s'-2t')=f^\star (c_1(T\mathbb{P}\gamma_{-k,l}))=c_1(T\mathbb{P}\gamma_{k,l})=(k-3)s-2t.$$
This shows that $f^*(t')=t-ks$ and hence $R(\hat{g}\circ f)=X_k$.

	\end{proof}
	
	By viewing $\mathbb{P}\gamma_{k,l}$ as a $S^2$-bundle, the antipodal bundle map gives a orientation reversing map $f_-$. 
	
		\begin{lemma}
		$R_{k,l}(f_1^{-1}\circ f_-)=-id\in A_{k,l}$.
	\end{lemma}
	\begin{proof}
		It suffices to prove $R_{k,l}(f_-)=-R_{k,l}(f_1)$. Assume $f_-^\star(s,t)=(as+bt,ns+mt)$, $f_-$ is a orientation reversing bundle map implies $a=1,b=0,m=-1$, then from
		$$0=f_-^*(t^2-kts+ls^2)-(t^2-kts+ls^2)=2(k-n)ts+n(n-k)s^2$$ 
		one has $n=k$.
	\end{proof}
	\begin{cor}\label{A+}
		$A_{k,l}=A_{k,l}^+\oplus\z_2$
	\end{cor}
	\begin{proof}
		By definition, there is a well defined map $r:A_{k,l}\to \{\pm 1\}$ taking $R_{k,l}(f)$ to $1$ if and only if $f$ is orientation preserving. By the above Lemma and Proposition \ref{Hcp}, $r(-id)=-1$. This implies the restriction of $r$ on $\{\pm I\}\subset Center(A_{k,l})$ is an isomorphism. Thus $A_{k,l}=Ker(r)\oplus Im(r)=A_{k,l}^+\oplus\z_2$.
	\end{proof}
	
	Identify $\mathbb{P}\gamma_{1,1}$ with the Milnor hypersurface (see Example \ref{eg:1})
$$M_1=\{(z,z')\in \mathbb{P}^2\times\mathbb{P}^2 \ | \  \sum z_iz'_i=0\}.$$ 
Define a diffeomorphism $f_2 \colon M_1\to M_1$, $f_2(z,z')=(z',z)$. Since the canonical basis $\{s, t\}$ of $H^2(\mathbb{P}\gamma_{1,1})$  equals to $\{\pi^* x, \pi'^* x\}$ we have
	
	$$R_{k,l}(f_2)=H=\begin{pmatrix}0&1\\1&0\end{pmatrix}.$$
	
	\begin{lemma}\label{A}
		There are generators:
		\begin{enumerate}
			\item $A^+_{k,l}=\{I,X_k\}\cong\z_2$, If $k^2-4l\neq -3$.
			\item $A^+_{1,1}=\{I,X_1,H,HX_1,X_1H,HX_1H\}\cong S_3$ the symmetric group.		
		\end{enumerate}	
	\end{lemma}
	\begin{proof}
		Throughout this proof we keep proposition \ref{Hcp} in mind.
		
		Suppose $f \colon \mathbb{P}\gamma_{k,l} \to \mathbb{P}\gamma_{k,l}$ be a diffeomorphism such that  $f^*(s,t)=(as+bt,ns+mt)$. If $k^2-4l\neq -3$, since $f$ preserves $p_1(N_r)=(r+3)s^2 \ne 0$, we have $f^*(s^2)=s^2$. Therefore the identity  $0=f^*(s^3) =s^2(as+bt)=bs^2t$ implies $b=0$, and $a = \pm 1$, $m = \pm 1$.
		
		Since $f$ is orientation-preserving, we have $s^2t=f^*(s^2t)=ms^2t$,  thus $m=1$.
		
		If $a=1$, then from 
		$$0=f^*(t^2-kts+ls^2)-(t^2-kts+ls^2)=2nts+(n^2-kn)s^2$$ 
		one has $n=0$ and $R_{k,l}(f)=I$.
		
		If $a=-1$, then from 
		$$0=f^*(t^2-kts+ls^2)-(t^2-kts+ls^2)=2(k+n)ts+(n^2+kn)s^2$$ 
		one has $n=-k$ and $R_{k,l}(f)=X_k$.
	
Now assume $(k,l)=(1,1)$, first notice that we have the relation $t^2 -ts +s^2=0 \in H^*(\mathbb{P}\gamma_{1,1})$. Thus from the identities $$(t^2-ts+s^2)s=0, \ t(t^2-ts+s^2)=0$$ 
we have $t^2s=ts^2$ and $t^3=0$. The identity $0=f^*(s^3)=(as+bt)^3=3ab(a+b)t^2s$ implies $a=0$, or $b=0$, or $a+b=0$.

If $b=0$, then by the same argument as in the case for $r \ne -3$ we have $R_{1,1}(f)= I$ or $X_1$.

If $a=0$, then $b = \pm 1$, $n= \pm 1$.  Since $f$ is orientation-preserving, $f^*(ts^2)=ts^2$, therefore $n=1$. 
\begin{enumerate}
\item If $b=1$, then the identity $0=f^*(t^2-ts+s^2)-(t^2-ts+s^2)=2mts+(m^2-m)t^2$ implies $m=1$.
\item If $b=-1$, then the identity  $0=f^*(t^2-ts+s^2)-(t^2-ts+s^2)=(m+1)ts+(m^2+m)t^2$ implies $m=-1$.
\end{enumerate}

If $a+b=0$,  then $(a,b)=(1,-1)$ or $(-1,1)$ (since the matrix $R_{1,1}(f)$ is unimodular). From the identity 
$$ts^2=f^*(ts^2)=(ns+mt)(as+bt)^2=(ns+mt)(t^2+2ts+s^2)=-(m+n)ts^2$$ 
we have $n+m=-1$.
\begin{enumerate}
\item If $a=1$, then the identity $0=f^*(t^2-ts+s^2)=-(n^2+n)ts+2nt^2$ implies $n=0$.
\item If $a=-1$, then the identity $0=f^*(t^2-ts+s^2)=-(n^2+3n+2)ts+(2n+2)t^2$ implies $n=-1$.
\end{enumerate}		
Summarizing these calculations, and notice that $R_{1,1}(f_1)=X_1$, $R_{1,1}(f_2)=H$, one may conclude that the image of $MCG(\mathbb{P}\gamma_{1,1})$ under $R_{1,1}$ is
$$\{I,X_1,H,HX_1,X_1H,HX_1H\}.$$ 
This group is isomorphic to the symmetric group $S_3$.

	\end{proof}
	
	Let $A^+_r:=MCG^+(N_r)/I(N_r)$, then Proposition \ref{prop:real3} implies $A^+_{k^2-4l}\cong A^+_{k,l}$, and (by the above Lemma \ref{A} and Corollary \ref{A+}) we have the first part of the main theorem:
	
	\begin{cor}
		There is a diagram with each row a short exact sequence 
		$$\xymatrix{
			1\ar[r]\ar[d]&I(N_r)\ar[r]\ar[d]^=&\m^+(N_r)\ar[r]\ar[d]&A^+_r\ar[r]\ar[d]&\ar[d]1\\
			1\ar[r]&I(N_r)\ar[r]&\m(N_r)\ar[r]&A^+_r\oplus\z_2\ar[r]&1}$$
		
		where $A_r \cong \z_2$ if $r \ne -3$; and $A_r \cong S_3$, the symmetric group of $3$ elements, if $r=-3$.
	\end{cor}

	\section{Computation of the Torelli group $I(N_r)$}
We have determined the action of $\m(N_r)$ on the cohomology ring in last section. To describe the structure of $\m(N_r)$ we need to determine the Torelli group $I(N_r)$.	
		
	\subsection{generators of $I(N_r)$}\label{subsec:gen}
	
A natural idea to study the mapping class group is to relate it to the group of homotopy equivalences. To be precise, let $M$ be a simply closed connected manifold with $\dim M>5$. Let $G(M)$, $\mathrm{Diff}(M)^b$ and $\mathbb{S}(M)$ be the topological monoid of self-homotopy equivalences, the blocked diffeomorphism group and the surgery space of $M$, respectively (see \cite{BeMa}).  Let $S(M)=\pi_0(\mathbb{S}(M))$ denote the structure set of $M$. Then there is a fibration (\cite{Wa98,BeMa})
	$$\mathrm{Diff}(M)^b\to G(M)\to \mathbb{S}(M).$$
	The right-most segment of the long exact sequence of homotopy groups associated to this fibration can be identify with the exact sequence
	$$\cdots \to S_\partial(M\times I)\to \m(M)\to\pi_0(G(M))\to S(M),$$
where $\pi_0(G (M))$ is the group of homotopy classes of homotopy equivalences relative.
This can also be proved directly by the $h$-cobordism theorem of Smale and the pseudo-isotopy theorem of Cerf. Let $I^h(M)$ be the subgroup of $\pi_0(G(M))$ consists of self homotopy equivalences that induce the identity on cohomology. Then the above exact sequence reduces to an exact sequence
	$$ \cdots \to S_\partial(M\times I)\to I(M)\to I^h(M)\to S(M).$$
		
	\begin{lemma}\label{t7}
		Let $\Theta_7\cong \mathbb{Z}_{28}$ be the Milnor-Kervaire group of oriented differential structures on the $7$-sphere. Then connected sum operation induces a surjective map $cs:\Theta_7\to S_\partial(N_r\times I)$. 
	\end{lemma}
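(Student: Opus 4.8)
The plan is to interpret $S_\partial(N_r \times I)$, the relative smooth structure set of $N_r \times I$, via the standard surgery-theoretic identification with concordance classes of homotopy smoothings rel boundary, and to build all such smoothings by connected sum with homotopy $7$-spheres. Concretely, given a homotopy equivalence $(W, \partial W) \to (N_r \times I, \partial(N_r \times I))$ restricting to a diffeomorphism on the boundary, one wants to show $W$ is obtained from $N_r \times I$ by removing an embedded $D^7$ in the interior and regluing via an exotic sphere — equivalently, $W \cong (N_r \times I) \# \Sigma$ for some $\Sigma \in \Theta_7$. First I would recall that $N_r$ is a closed simply-connected spin (or at least stably parallelizable in the relevant range) $6$-manifold, so $N_r \times I$ is a parallelizable $7$-manifold with boundary; the key inputs are that the normal invariant of any such homotopy smoothing lives in $[N_r \times I / \partial, G/O] = [\Sigma(N_{r,+}), G/O]$, and that this group vanishes.

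The central computation, then, is to show $[\Sigma(N_{r,+}), G/O] = \widetilde{[N_r, \Omega(G/O)]}$ — or more directly $[N_r \times I, \partial; G/O] = 0$. I would argue this using the low-degree homotopy of $G/O$: one has $\pi_i(G/O) = 0$ for $i = 1, 3$, $\pi_2(G/O) = \z_2$, $\pi_4(G/O) = \z$, $\pi_5(G/O) = 0$, $\pi_6(G/O) = \z_2$, $\pi_7(G/O) = \z$, and the relevant obstruction/AHSS computation for $[\Sigma(N_{r,+}), G/O]$ only involves $H^*(N_r)$ in degrees $\le 6$, which by Proposition~\ref{Hcp} is free on $1, s, t, s^2, st, s^2t$ (with $t^2$ expressible via $s$ and $t$). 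One then checks the spectral sequence differentials or Postnikov obstructions vanish — the Steenrod operations $Sq^2$ entering the $d_2$ on $G/O$ (which is $Sq^2$ onto the $\pi_2$-summand, reflecting the Kervaire class) need to be computed on $H^*(N_r; \z_2)$, and since $H^*(N_r; \z_2)$ is a free module over $H^*(\p^2; \z_2)$ and the total Stiefel–Whitney class is explicit, one verifies the surviving terms are killed, yielding $[\Sigma(N_{r,+}), G/O]=0$. Granting this vanishing, the surgery exact sequence $[\Sigma(N_{r,+}), G/O] \to S_\partial(N_r\times I) \to [N_{r,+}\wedge S^1 \wedge S^0 \ldots]$ collapses to show that $S_\partial(N_r\times I)$ injects into (in fact is a quotient of) the group detected by the ambient exotic sphere, i.e.\ the connected-sum map $cs\colon \Theta_7 \to S_\partial(N_r\times I)$ is surjective.

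For the surjectivity of $cs$ as stated (rather than bijectivity, which the lemma wisely does not claim), the cleanest route is: every element of $S_\partial(N_r\times I)$ has trivial normal invariant by the vanishing above, hence lies in the image of the Wall surgery obstruction action of $L_7(\z) = 0$ composed with the action of $\Theta_7$; chasing the exact sequence, the subgroup acting through changing the smooth structure in a disk is exactly the image of $\Theta_7$ under connected sum, and triviality of the normal invariant forces every smoothing into this image. I would phrase this as: the forgetful map $S_\partial(N_r \times I) \to [\Sigma(N_{r,+}), G/O]$ is trivial, and its ``fiber'' over the basepoint is generated by $\Theta_7 = S_\partial(D^7)$ acting by connected sum — this is a standard consequence of the naturality of the surgery sequence under the inclusion $D^7 \hookrightarrow N_r \times I$.

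The main obstacle I anticipate is the $G/O$ computation — specifically, verifying that the $k$-invariants of $G/O$ (the secondary operations refining $Sq^2$ and the Pontrjagin classes) produce no surviving contributions to $[\Sigma(N_{r,+}), G/O]$. The degree-$6$ and degree-$7$ pieces are where $\pi_6(G/O) = \z_2$ and $\pi_7(G/O)=\z$ could contribute, and one must rule out classes supported on the $4$- and $5$-cells of $N_r$ propagating upward. Here the condition $r \in 8\z+5$ (equivalently the nontriviality of $h^*\xi_r$ and the specific form of $H^*(N_r)$ and its $w$-classes) should be exactly what makes the relevant $Sq^2$ surjective, annihilating the potential $\z_2$ in degree $6$; handling the free $\z$ in degree $7$ is automatic since $H^7(N_r\times I/\partial) = H^6(N_r)$ pairs against $\pi_7(G/O)$ only through the signature/$L$-class obstruction, which is a concordance invariant already accounted for by $L_7=0$. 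The rest is bookkeeping with the explicit cohomology ring from Proposition~\ref{Hcp}.
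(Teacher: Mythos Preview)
Your overall strategy coincides with the paper's: use the surgery exact sequence
\[
L_8(\z) \to S_\partial(N_r\times I) \to [\Sigma N_r, G/O] \to L_7(\z),
\]
show that $[\Sigma N_r, G/O]$ is trivial, and then observe that the Wall realization map $L_8(\z)\to S_\partial(N_r\times I)$ factors through $\Theta_7$ via connected sum. So the architecture is right.

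Where you go astray is in the homotopy groups of $G/O$. You assert $\pi_7(G/O)=\z$; in fact $\pi_7(G/O)=0$. (One way to see this: the surgery sequence for $S^7$ reads $L_8(\z)\to\Theta_7\to\pi_7(G/O)\to L_7(\z)=0$, and the first map is onto because $bP_8=\Theta_7$.) Once you have the correct values $\pi_3(G/O)=\pi_5(G/O)=\pi_7(G/O)=0$, the vanishing of $[\Sigma N_r, G/O]$ is immediate: $N_r$ has cohomology only in even degrees, so $\Sigma N_r$ has reduced cohomology only in degrees $3,5,7$, and every obstruction group $H^i(\Sigma N_r;\pi_i(G/O))$ is zero on the nose. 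This is exactly the paper's argument, in one line.

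Consequently, all the machinery you propose to deploy --- Steenrod squares, secondary $k$-invariants of $G/O$, the AHSS differentials, and especially the hypothesis $r\in 8\z+5$ --- is unnecessary here. The condition $r\in 8\z+5$ plays no role in this lemma; it enters only later, in the analysis of $I^h(N_r)$ via $\pi_6(N_r)$. Your speculation that this arithmetic condition is ``exactly what makes the relevant $Sq^2$ surjective'' is chasing a phenomenon that is not there.

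A smaller point: in your final paragraph you invoke $L_7(\z)=0$ when explaining why $\Theta_7$ surjects. The relevant $L$-group is $L_8(\z)\cong\z$, which acts on $S_\partial(N_r\times I)$ by Wall realization; this action factors through $L_8(\z)\to\Theta_7$ (plumbing the $E_8$-form and taking the boundary sphere), and surjectivity of $L_8(\z)\to S_\partial(N_r\times I)$ then gives the conclusion. The group $L_7(\z)=0$ only tells you the normal-invariant map is surjective, which is vacuous once the target is a point.
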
  
	\begin{proof}
Consider the smooth surgery exact sequence ( Theorem 11.22 of \cite{LuTi})
$$L_8(\z) \to S_\partial(N_r\times I) \to [\Sigma N_r, G/O] \to L_7(\z).$$
A direct calculation shows that the group $H^i(\Sigma N_r;\pi_i(G/O))$ vanishes for all $i$. Therefore  by obstruction theory, the homotopy set $[\Sigma N_r, G/O]$ consists of a single element. Therefore
the map $L_8(\z) \to S_\partial(N_r\times I)$ is surjective. By the construction, the Wall realization map $L_8(\mathbb Z) \to S_{\partial}(N_r \times I)$ factors through $L_8(\z) \to \Theta_7 \to S_\partial(N_r\times I)$.
	\end{proof}
		
	\begin{lemma}\label{Ih}
		Let $\mu_N:N_r\to N_r\vee S^6$ be the pinch map, then induced map
		$$\mu_N^* \colon \pi_6(N_r)\to I^h(N_r),\ \alpha \mapsto (\mathrm{id} \vee \alpha)\circ\mu_N$$ is surjective if $r\in8\z+5$. 
	\end{lemma}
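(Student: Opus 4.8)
The plan is to analyze the structure of $I^h(N_r)$ by comparing $N_r$ with a simpler reference space, using the fact that $N_r$ is (as established in Section 2) an $S^2$-bundle over $\mathbb P^2$ that is ``nearly'' a product below the top cell. Concretely, since $\dim N_r = 6$, a self-homotopy equivalence that is the identity on cohomology is determined up to homotopy by its behavior on the skeleta of $N_r$; the Torelli condition forces it to be trivial on the $4$-skeleton (at least after homotopy), so the only freedom comes from the top cell. I would first set up a CW structure on $N_r$ with a single $6$-cell, write $N_r = N_r^{(5)} \cup_{\psi} e^6$ with attaching map $\psi \in \pi_5(N_r^{(5)})$, and observe that $H^*(N_r)$ is torsion-free and generated in degree $2$ (Proposition \ref{Hcp}), so a homotopy equivalence $f$ lying in $I^h(N_r)$ induces the identity on $H^*(N_r^{(5)})$ as well.

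The key step is then an obstruction-theoretic argument. Given $f \in I^h(N_r)$, restrict it to the $5$-skeleton: I claim $f|_{N_r^{(5)}}$ is homotopic to the identity. This is where I would use the homotopy computations of \S\ref{subsec:hmtp} — in particular Corollary \ref{htg}, which pins down $\pi_3$, $\pi_4$, $\pi_5$ of $N_r$ for $r \in 8\mathbb Z + 5$ — together with the fact that the obstructions to a homotopy between $f|_{N_r^{(k)}}$ and the identity lie in groups built from $H^*(N_r^{(5)}; \pi_*(N_r))$, and that self-equivalences inducing the identity on $H_2$ automatically induce the identity on all of $H^*$ of the $4$-skeleton (since $H^*$ is a quotient of the polynomial algebra on $H^2$). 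Once $f|_{N_r^{(5)}} \simeq \mathrm{id}$, we may assume $f$ agrees with the identity on $N_r^{(5)}$ on the nose; then $f$ is determined by how it maps the top cell, i.e.\ by an element of $\pi_6(N_r)$, and unwinding the construction shows precisely that $f$ is of the form $(\mathrm{id} \vee \alpha) \circ \mu_N$ for some $\alpha \in \pi_6(N_r)$. This exhibits $\mu_N^*$ as surjective.

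I should also check that the assignment $\alpha \mapsto (\mathrm{id}\vee\alpha)\circ\mu_N$ does land in $I^h(N_r)$ and is the map described — that it is a homomorphism modulo the image of the relevant difference obstruction is standard (the co-H structure on $N_r$ given by $\mu_N$ makes $\pi_6(N_r)$ act), and that such maps induce the identity on cohomology is immediate since $\alpha$ hits only the top cell while $H^6(N_r)$ is generated by products of lower-degree classes, all of which are preserved.

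The main obstacle I anticipate is the claim that $f|_{N_r^{(5)}} \simeq \mathrm{id}$: one must rule out the possibility that a Torelli self-equivalence acts nontrivially on the $5$-skeleton (for instance by a ``partial Dehn twist'' along a lower cell), which requires knowing that the relevant obstruction groups — cohomology of $N_r^{(5)}$ with coefficients in $\pi_3, \pi_4, \pi_5$ — either vanish or are killed by the cohomological triviality constraint. This is exactly the point at which the delicate input of Corollary \ref{htg} (e.g.\ $\pi_4(N_r) = 0$ and the precise image of $\pi_5(N_r) \to \pi_5(\mathbb P^2)$) does the work, so I would organize the proof so that these facts are invoked cell by cell.
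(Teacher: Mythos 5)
Your proposal is correct and takes essentially the same approach as the paper: both arguments are obstruction-theoretic comparisons of $f \in I^h(N_r)$ with the identity, using Corollary \ref{htg} (in particular $\pi_4(N_r)=0$, together with the absence of odd-dimensional cells/cohomology) to kill every obstruction below the top dimension, leaving a single difference element in $H^6(-;\pi_6(N_r))\cong\pi_6(N_r)$ that is realized exactly by $(\mathrm{id}\vee\alpha)\circ\mu_N$. The only cosmetic differences are that the paper runs the obstruction argument relative to the subcomplex $S^2\vee S^2$ (a fiber and a section over $\mathbb{P}^1$) rather than skeleton by skeleton, and checks that $(\mathrm{id}\vee\alpha)\circ\mu_N$ lies in $I^h(N_r)$ via triviality of the Hurewicz map $\pi_6(N_r)\to H_6(N_r)$ instead of your (equally valid) ring-structure argument using decomposability of $H^6(N_r)$.
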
  
	\begin{proof}
By Corollary \ref{htg} the Hurewicz homomorphism $\pi_6(N_r) \to H_6(N_r)$ is trivial. It's ready to see that the composite $(\mathrm{id} \vee \alpha)\circ\mu_N \colon N \to N \vee S^6 \to N$ induces the identity on homology groups, hence is in $I^h(N_r)$.

Let $i_1 \colon S^2 \to N_r$ be the inclusion of a fiber and $i_2 \colon S^2 \to N_r$ be a section of the restricted bundle of $S(\xi_r)$ to $\p^1 \subset \p^2$. Then clearly $i_{1*}[S^2]$ and $i_{2*}[S^2]$ form a basis of $\pi_2(N_r)\cong H_2(N_r)$. Given $f \in I^h(N_r)$, we may assume its restriction on $i_1(S^2) \vee i_2(S^2)$ is the identity. By the data in given in Corollary \ref{htg}, there a unique  obstruction $\mathfrak o (f) \in H^6(N_r, S^2 \vee S^2; \pi_6(N_r))$ to $f$ being homotopic to the identity. The group $H^6(N_r, S^2 \vee S^2; \pi_6(N_r))$  is naturally isomorphic to $\pi_6(N_r)$. And by the construction of the obstruction class, if $\mathfrak o(f)$ corresponds to $\alpha \in \pi_6(N_r)$, then $f$ is homotopic to $(\mathrm{id} \vee \alpha) \circ \mu$. 
	\end{proof}
	
Let $\alpha \colon D^k \to SO(n+1)$ be a smooth map sending a collar neighborhood of the boundary to the identity. The \emph{Dehn twist} parametrized by $\alpha$ is a diffeomorphism
 $$De(\alpha) \colon S^n \times D^k \to S^n \times D^k, \ De(\alpha)(x,y)= (\alpha(y)x, y).$$	
This construction induces a homomorphism $De \colon \pi_k(SO(n+1))\to \m_\partial(S^n\times D^k)$, where $\m_\partial(S^n\times D^k)$ is the mapping class group relative to the boundary, i.e., diffeomorphisms and isotopies are assumed to be the identity in a collar neighborhood of the boundary. When $S^n \times D^k$ is embedded as a codimension $0$ submanifold of $M$, we extend $De(\alpha)$ by the identity on the complement to a diffeomorphism of $M$,  denoted by the same notation, and still call it a Dehn twist.
	
	In \cite[Theorem 7.9 and 7.12]{KreSu}, the group $\m_\partial(S^2\times D^4)$ is computed and a system of generator is given. Let $g_1$ be a diffeomorphism of $D^6$ relative to the boundary, representing a generator of  $ \m_\partial(D^6)\cong \Theta_7$, $t_1\in\pi_3(SO(4))$ be a generator of $\mathrm{Im}\, \pi_3(SO(3))\cong\mathbb{Z}$, $t_2$ be a generator of $\pi_4(SO(3))\cong \mathbb{Z}_2$ and $i_\eta:S^3\times D^3\to S^2\times D^4$ be an embedding representing the Hopf map $\eta\in\pi_3(S^2)$. Fix an embedded $6$-disk in $S^2 \times D^4$ and extend $g_1$ to the complement by the identity; let $g_2$ and $g_3$ be the Dehn twists in $i_{\eta}(S^3 \times D^3)$ with parameters $t_1$ and $t_2$, respectively. Then	$$\m_\partial(S^2\times D^4)=\mathbb{Z}_{28}\{g_1\}\oplus\mathbb{Z}_{12}\{g_2\}\oplus\mathbb{Z}_2\{g_3\}$$
For any given embedding of $S^2 \times D^4$ in a $6$-manifold, we extend these diffeomorphisms by the identity on the complement and still denote them by the same notation. 		
	\begin{prop}\label{sur}
		Let $i \colon S^2\times D^4\to N_r$ be a local trivialization of the $S^2$-bundle. Then $I^h(N_r)$ is generate by the Dehn twist $g_2$ and $I(N_r)$ is generate by the disk-supported diffeomorphism $g_1$ and the Dehn twist $g_2$ if $r\in8\z+5$. 
	\end{prop}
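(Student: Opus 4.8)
The plan is to run the surgery exact sequence $S_\partial(N_r\times I)\to I(N_r)\to I^h(N_r)\to S(N_r)$ recalled in \S\ref{subsec:gen}, identifying the kernel of the middle map with $\langle g_1\rangle$ and the image of $I(N_r)$ in $I^h(N_r)$ with the cyclic subgroup generated by the class of $g_2$. For the kernel: by Lemma \ref{t7} the connected-sum homomorphism $cs\colon\Theta_7\to S_\partial(N_r\times I)$ is surjective, so the image of $S_\partial(N_r\times I)$ in $I(N_r)$ equals the image of the composite $\Theta_7\to S_\partial(N_r\times I)\to I(N_r)$. This composite sends a homotopy $7$-sphere $\Sigma$ to the self-diffeomorphism of $N_r$ obtained by forming $N_r\#\Sigma$ inside a coordinate ball, i.e. to the disk-supported diffeomorphism corresponding to $\Sigma$ under $\m_\partial(D^6)\cong\Theta_7$; hence this image is exactly $\langle g_1\rangle$, and by exactness $\langle g_1\rangle=\ker\bigl(I(N_r)\to I^h(N_r)\bigr)$.

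Next I would reduce the statement to a single homotopy-theoretic computation. First note that $g_2\in I(N_r)$: a Dehn twist acts on the homology of the embedded handle, hence on $H_*(N_r)$, by an orientation-preserving rotation, so trivially. By Lemma \ref{Ih} the homomorphism $\mu_N^*\colon\pi_6(N_r)\to I^h(N_r)$ is surjective, and $\pi_6(N_r)\cong\z_6$ by Corollary \ref{htg}(4); hence $I^h(N_r)$ is cyclic and $\mu_N^*$ carries a generator of $\pi_6(N_r)$ to a generator of $I^h(N_r)$. So it suffices to show that the class of $g_2$ in $I^h(N_r)$ equals $\mu_N^*(\gamma)$ for some generator $\gamma$ of $\pi_6(N_r)$. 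Granting this, $\overline{g_2}$ generates $I^h(N_r)$, so $I(N_r)\to I^h(N_r)$ is onto; for any $\phi\in I(N_r)$ one picks $m$ with $\overline{\phi}=\overline{g_2}^{\,m}$, so that $\phi g_2^{-m}\in\ker\bigl(I(N_r)\to I^h(N_r)\bigr)=\langle g_1\rangle$ and hence $\phi\in\langle g_1,g_2\rangle$, which is the assertion.

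It remains to compute the homotopy class of $g_2$ as a self-map of $N_r$, which is the crux. Since $N_r$ admits a CW structure with cells only in dimensions $0,2,2,4,4,6$ and $\pi_4(N_r)=0$, the argument in the proof of Lemma \ref{Ih} shows that $g_2$ (which lies in $I^h(N_r)$, and which we may straighten to the identity on the $2$-skeleton) is homotopic to $(\mathrm{id}\vee\gamma)\circ\mu_N$ for a unique $\gamma\in\pi_6(N_r)$. I would pin down $\gamma$ using the standard description of the homotopy class of a Dehn twist: for an embedding $S^n\times D^k\hookrightarrow M$ with $n+k=\dim M$ and a parameter $\alpha\in\pi_k(SO(n+1))$ whose image under the evaluation $SO(n+1)\to S^n$ is null, the extended Dehn twist $\overline{De(\alpha)}$ is homotopic to $(\mathrm{id}\vee\gamma)\circ\mu_M$, where $\gamma$ is the image, under the homomorphism $\pi_{n+k}(S^n)\to\pi_{n+k}(M)$ induced by the core sphere $S^n\hookrightarrow M$, of the element $J(\widetilde\alpha)$ obtained from a lift $\widetilde\alpha\in\pi_k(SO(n))$ by the $J$-homomorphism. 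For $g_2$ one has $n=k=3$; the evaluation hypothesis holds because $t_1\in\mathrm{Im}\bigl(\pi_3(SO(3))\to\pi_3(SO(4))\bigr)=\ker\bigl(\pi_3(SO(4))\to\pi_3(S^3)\bigr)$, and $\widetilde{t_1}$ is then a generator of $\pi_3(SO(3))\cong\z$. By construction the core sphere $i\circ i_\eta(S^3\times\{0\})$ represents $i_{1*}\eta$, a generator of $\pi_3(N_r)\cong\z$ (Corollary \ref{htg}(1)), so the induced map $\pi_6(S^3)\to\pi_6(N_r)$ factors as $\pi_6(S^3)\xrightarrow{\eta_*}\pi_6(S^2)\xrightarrow{i_{1*}}\pi_6(N_r)$, in which $\eta_*$ is an isomorphism by the homotopy exact sequence of the Hopf fibration $S^1\to S^3\to S^2$, and $i_{1*}\colon\pi_6(S^2)\cong\z_{12}\to\pi_6(N_r)\cong\z_6$ is surjective with kernel of order $2$ (Corollary \ref{htg}(4)). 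Hence $\gamma$ is a generator of $\pi_6(N_r)$ provided $J(\widetilde{t_1})$ generates $\pi_6(S^3)\cong\z_{12}$, i.e. is the element $\nu'$; alternatively this last point, together with the naturality of the pinch-map description under $i$, can be extracted from the computation of $\m_\partial(S^2\times D^4)$ in \cite{KreSu}, where $g_2$ already has order $12$.

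I expect the main obstacle to be precisely this last step: setting up the homotopy classification of the Dehn twist $g_2$ inside $N_r$ (in particular verifying that the only obstruction to a homotopy to the identity is the $\pi_6$-class, which here is forced by $\pi_4(N_r)=0$ and the absence of odd cells) and then checking that the resulting element of $\pi_6(N_r)\cong\z_6$ is a generator rather than a proper multiple of one. Once $\gamma$ is identified as a generator, the rest is formal from Steps 1–2.
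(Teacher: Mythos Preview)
Your proposal is correct and follows essentially the same route as the paper: both use the exact sequence $S_\partial(N_r\times I)\to I(N_r)\to I^h(N_r)$, identify the kernel with $\langle g_1\rangle$ via Lemma~\ref{t7}, and then show that the class of $g_2$ generates $I^h(N_r)$ by computing its $\pi_6(N_r)$-obstruction as the image of $J(t_1)\in\pi_6(S^3)$ under $(i\circ\eta)_*$, using that $J\colon\pi_3(SO(3))\to\pi_6(S^3)$ is surjective together with Corollary~\ref{htg}(4). The only difference is packaging: the paper builds the identification of the Dehn twist's homotopy class through explicit commutative diagrams involving $\mu_3^*\colon\pi_6(S^3)\to\pi_0(G_\partial(S^3\times D^3))$ and the homotopy Dehn twist map $De^h$, whereas you invoke the same content as a ``standard description'' of the homotopy class of a Dehn twist---so your hesitation about the last step is exactly where the paper supplies the missing detail.
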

	\begin{proof}
Choose a localization $i \colon S^2\times D^4\to N_r$ such that $i(S^2\times D^4)$ and $S^2\vee S^2$ (defined in the proof of Lemma \ref{Ih}) are disjoint. The diffeomorphisms $g_1$ and $g_2$, which are extended by the identity on the complement of $i(S^2 \times D^4)$, clearly induce the identity on the cohomology groups and thus (their isotopy classes) are in $I(N_r)$.    
		
		By Lemma \ref{t7} and the exactness of $S_\partial(N_r\times I)\to I(N_r)\to I^h(N_r)$, the kernel of the forgetful homomorphism $I(N_r) \to I^h(N_r)$ is generated by $g_1$. Thus we only need to show that the image of $g_2$ is a generator of $I^h(N_r)$. This will be done by showing that the image of $g_2$ generates  the image of $\mu_N^*$ (see Lemma \ref{Ih}).
		
		Let $\mu_3:S^3\times D^3 \to (S^3\times D^3)\vee S^6$ be the pinch map. It induces a map
		$$\mu_3^* \colon \pi_6(S^3)\to \pi_0(G_\partial(S^3\times D^3)), \ \alpha \mapsto (id\vee\alpha)\circ\mu_3.$$
where we represent $\alpha \in \pi_6(S^3)$ by a map $S^6 \to S^3 \times \{0\}$. 
View $S^3\times D^3$ as a submanifold of $N_r$ through the embedding $i\circ i_\eta \colon S^3 \times D^3 \to S^2 \times D^4 \to N_r$, extending the map $\mu^*(\alpha)$ by the identity on the complement we have a homotopy equivalence $\mu^*(\alpha)$ of $N_r$. For $\alpha\in\pi_6(S^3)$, consider the composite $i \circ \eta \circ \alpha \colon S^6 \to S^3 \to S^2 \to N_r$. From the construction of the maps one readily verifies that $\mu_3^*(\alpha)=\mu_N^*(i \circ\eta \circ \alpha)\in I^h(N_r)$, i.e., there is a commutative diagram
$$\xymatrix{\pi_6(S^3) \ar[r]^{(i \circ \eta)_*} \ar[d]_{\mu_3^*} & \pi_6(N_r) \ar[d]^{\mu_N^*} \\
 \pi_0(G_\partial(S^3\times D^3)) \ar[r]^{\ \ \ \ \ \Phi} & I^h(N_r)}$$
 where the map $\Phi$ at the bottom is the extension by the identity on the complement of $i\circ i_{\eta}(S^3 \times D^3)$.
		
It can be verified by elementary homotopy theory (compare \cite[p.98]{BeMa}) that the homotopy Dehn twist map $De^h:\Omega^3 G(S^3)\to G_\partial(S^3\times D^3)$ defined by $De^h(\alpha)(x,y)=(\alpha(y)x,y)$ 
induces an isomorphism 
$\pi_0(\Omega^3G(S^3)) \to \pi_0(G_\partial(S^3\times D^3))$.  Under the natural isomorphisms
$$\pi_0(\Omega^3G(S^3)) \cong \pi_3(G(S^3), \mathrm{id}) \cong \pi_3((\Omega^3S^3)_{\mathrm{id}}) \cong \pi_6(S^3),$$
where $(\Omega^3S^3)_{\mathrm{id}}$ is the connected component containing $\mathrm{id}_{S^3}$, this map can be identified with $\mu_3^* \colon \pi_6(S^3) \to \pi_0(G_\partial(S^3\times D^3))$. The following commutative diagram
$$\xymatrix{\pi_3(SO(3))\ar[d]\ar[rr]^J&&\pi_3(\Omega^3S^3)\ar[d]_{\mu_3^*}\\
			\pi_3(SO(4))\ar[r]^{De \ \ \ \ }&\m_{\partial}(S^3\times D^3)\ar[r]&\pi_0(G_\partial(S^3\times D^3))}$$ 
shows that the image of $g_2$ in $I^h(N_r)$ equals $\Phi(\mu_3^*(J(t_1)))$.
The $J$-homomorphism $J \colon \pi_3(SO(3))\to\pi_6(S^3)$ is surjective. Therefore by Corollary \ref{htg} and Lemma \ref{Ih},  the image of $g_2$ generates $I^h(N_r)$.   This finishes the proof of this proposition.	
		
	\end{proof}
	\subsection{A brief introduction to Kreck's result}\label{bri}

In this subsection we give a brief introduction of some results in \cite{Kre18} and explain how these results work for our situation.

Let $M$ be a simply connected $6$-manifold with $H^2(M)$ torsionfree, $H^3(M)=0$ and $H^2(M)\cup H^2(M)=H^4(M)$. A polarization of $M$ is a basis $b=(b_1,b_2,..,b_m)$ of $H^2(M)$ such that  $w_2(M)\equiv b_1\pmod 2$ if $M$ is non-spin.

Let $B(m)=K(\z^m,2)\times BSpin$, define $p_0,p_1:B\to BSO$ be 
$$p_0=\oplus\circ(\star, \pi):K(\z^m,2)\times BSpin \to BSO\times BSO\to BSO$$ 
$$p_1=\oplus\circ(pr_1^\star O(-1), \pi):K(\z^m,2)\times BSpin\to BSO\times BSO\to BSO$$ 
here $\pi \colon BSpin\to BO$ is the canonical projection, $pr_1 \colon \z^m\to \z$ is the projection to the first factor, $O(-1)$ is the tautological line bundle over $CP^\infty=K(\z,2)$ and $\oplus :BSO\times BSO\to BSO$ is the $H$-space structure of $BSO$. 

Then we have pullback squares:
$$\xymatrix{
	B(m)\ar[r]^{pr}\ar[d]^{p_0}&K(\z^m,2)\ar[d]^0&&B(m)\ar[r]^{pr}\ar[d]^{p_1}&K(\z^m,2)\ar[d]^{pr_1(mod2)}\\
	BSO\ar[r]^{w_2}&K(\z_2,2)&&BSO\ar[r]^{w_2}&K(\z_2,2)
}$$
here $w_2:BSO\to K(\z_2,2)$ is the universal Stiefel-Whitney class.

Let $b=(b_1,b_2,..,b_m)$ be a polarization of $M$. If $M$ is spin, $-[TM]\in KSO(M)=[M,BSO]$ admits a lifting $g_{Spin}$ to $BSpin$. Let $(B,p):=(B(m),p_0)$ and $\hat{g}:=(b,g_{Spin}) \colon M \to B$. If $M$ is non-spin, view $b_1$ as a complex line bundle over $M$, then $-[TM\oplus b_1]\in KSO(M)=[M,BSO]$ admits a lifting $g_{Spin^c}$ to $BSpin$. Let $(B,p):=(B(m),p_1)$ and $\hat{g}:=(b,g_{Spin^c}) \colon M \to B$. Since $p\circ\hat{g}=-[TM]\in KSO(M)=[M,BSO]$ we may assume $\hat{g}$ is a normal $(B,p)$-structure ( i.e. a lift of the normal Gauss map $M\to BSO$ ) of $M$ and call it the normal $(B,p)$-structure induced by polarization $b$. In fact $p$ is the normal $2$-type of $M$ ( i.e. the $2$-stage Moore-Postnikov tower of the normal Gauss map).

\cite{Kre18} proved that

\begin{thm}\label{Kre18}(M.Kreck 2018)
	
	Suppose $b$ is a polarization of $M$, $\hat{g}:\colon M \to B$ is the normal $(B,p)$-structure induced by $b$, and $[f]\in I(M)$. Then $[f]=0$ if and only if:
	\begin{enumerate}
		\item There exist a normal $(B,p)$-structure $\hat{G}$ on the mapping tours $T_f$ extending $\hat{g}$.
		\item There exist a $(B,p)$-coboundary $(W,\bar{G})$ of $(T_f,\hat{G})$ such that $sign(W)=0$.
		\item $\langle\bar{G}^\star\alpha\cup\bar{G}^\star\beta,[W,\partial W]\rangle=0$, $\forall$ $\alpha,\beta\in Ker(\hat{G}^\star:H^4(B)\to H^4(T_f))$.
	\end{enumerate}
\end{thm}

\begin{nota}
	The notation $\langle\bar{G}^\star\alpha\cup\bar{G}^\star\beta,[W,\partial W]\rangle$ means $\langle\bar{\alpha}\cup\bar{\beta},[W,\partial W]\rangle$, where $\bar{\alpha}, \bar{\beta}\in H^4(W,\partial W)$ are preimage of $\bar{G}^*\alpha, \bar{G}^*\beta\in H^4(W)$, respectively, and this is well defined.
\end{nota}

\begin{rmk}
	The original theorem uses $\mathbb{Q}$ coefficient, but in these cases $H^4(B)$ and $H^4(T_f)$ are free Abelian groups. 
\end{rmk}

\begin{rmk}\label{rmk}
	In fact, the prove of the theorem in \cite{Kre18} shows that under the above condition $(W,\bar{G})$ is $(B,p)$-bordant to $(W',\bar{G}')$ relative to boundary such that $(W';M\times [0,\frac{1}{2}],M\times [\frac{1}{2},1])$ is an $s$-cobordism. 
\end{rmk}

\subsection{Generalized Kreck-Stolz invariant}
Assume the same as last subsection, similar to \cite{KreSu} we define the generalized Kreck-Stolz invariant on $I(M,D)$, and investigate the conjugation action of $FMCG(M,D)$ on $I(M,D)$.

Let $K:=Ker(\hat{g}^\star:H^4(B)\to H^4(M))$. For $(W,\bar{G})$ a $8$-dimensional $(B,p)$-manifold such that the restriction of $\bar{G}^\star K$ on $\partial W$ are $\{0\}$. Define $KS(W,\bar{G})\in Hom(Sym^2K,\z)$ to be:
$$KS(W,\bar{G})([\alpha\otimes \beta])=\langle\bar{G}^\star\alpha\cup\bar{G}^\star\beta,[W,\partial W]\rangle$$

Fix an embedded $6$-disk $D \subset M$, let $FMCG(M,D)$ (resp.$I(M,D)$) be the group of $D$-fixing isotopy classes of $D$-fixing diffeomorphisms (resp. which induce the identity on cohomology). By the disk theorem of Palais and the isotopy extension theorem, the forgetful homomorphism $I(M,D)\to I(M)$ is surjective. By a theorem from (\cite{Wa63} p.265), $\mathrm{Ker}(I(M,D)\to I(M))$ is trivial or isomorphic to $\mathbb{Z}_2$. 

Given a diffeomorphism $f \colon M \to M$ with $f|_D = \mathrm{id}$, let $T_f$ be the mapping torus of $f$ with the preferred embedding $S^1\times D \subset T_f$. Assume that the action of $f$ on the cohomology groups is trivial, by Wang's sequence $H^{2i}(T_f)=H^{2i}(M)=H^{2i+1}(T_f)$. Let $g_D:S^1\times D\to B$ be the normal $(B,p)$ structure induced by $D^2\times D$. Then we have:

$$\xymatrix{
	&hofib(p)\ar[d]\ar[r]^=&hofib(p)\ar[d]\\
	M\cup S^1\times D\ar[r]^{\ \ \ \ \ \hat{g}\cup g_D}\ar[d]&B\ar[r]^{pr}\ar[d]^{p}&K(\z^m,2)\ar[d]^{0\ or\ pr_1(mod2)}\\
	T_f\ar[r]^{-[T_{T_f}]}\ar[rru]^{b}&BSO\ar[r]^{w_2}&K(\z_2,2)
}$$

It's ready to see by obstruction theory that there is a normal $B$-structure $\hat{G}$ on $T_f$, extending $\hat{g}\cup g_D$. It is shown in \cite[Theorem 6]{Kre18} that the $7$-dimensional $(B,p)$-bordism group $\Omega_7(B,p)$ is trivial, therefore $(T_f,\hat{G})$ admits a coboundary $(W,\bar{G})$ whose signature $\mathrm{sign}(W)$  is $0$ ( after connecting sum with some $\mathbb{H}P^2$ ). Since $H^4(T_f)=H^4(M)$ the restriction of $\bar{G}^*K$ on $\partial W= T_f$ vanish and the $KS$-invariant is defined by

$$KS \colon I(M,D)\to Hom(Sym^2K,\z)/L$$
$$KS(f):=KS(W,\bar{G})$$
$$L:=\{KS(X):[X]\in \Omega_8(B,p), Sign(X)=0\}$$
	\begin{prop}\label{KS}
	$KS \colon I(M,D)\to Hom(Sym^2K,\z)/L$ is a well defined injective homomorphism, and thus $I(M)$, $I(M,D)$ are abelian. 
\end{prop}
\begin{proof}
	This proof is along the same line of the proof of Proposition 6.2 of \cite{KreSu}, with more details to be checked. 
	
	To prove $KS$ is well defined, suppose $(W_1,\bar{G}_1),(W_2,\bar{G}_2)$ are two $B$ coboundary of $(T_f,\hat{G})$ and $\alpha_1,\alpha_2\in K$. Chose embeddings $i_j: W_j\to \mathbb{R}^N\times (-1)^j[0,+\infty)$ good enough such that $i_j(W_j)\cap\mathbb{R}^N=T_f$. Define $W:=i_1(W_1)\cup i_2(W_2)\cong W_1\cup_\partial W_2$ then $(W,G_w:=\bar{G}_1\cup \bar{G}_2)$ is a well defined element in $\Omega_8(B,p)$, by \cite{Wa69} $sign(W)=0$. Consider the following diagram
	$$\xymatrix{\oplus_j H^4(W_j,T_f)\ar[r]&\oplus_j H^4(W_j)\ar[r]&\oplus_j H^4(T_f)\\
		H^4(W,T_f)\ar[r]\ar[u]^\cong &H^4(W)\ar[r]\ar[u]&H^4(T_f)\ar[u]^{injective}
	}$$
	
	This implies the preimage $\bar{\alpha}_i$ of $G_w^*\alpha_i\in H^4(W)$ in $H^4(W,T_f)\cong\oplus_j H^4(W_j,T_f)$ is a preimage of $(\bar{G}^\star_1\alpha_i, \bar{G}^\star_2\alpha_i)$ and, $$KS(W_1,\bar{G}_1)(\alpha_1\otimes\alpha_2)-KS(W_2,\bar{G}_2)(\alpha_1\otimes\alpha_2)$$
	$$=\langle\bar{G}_1^\star\alpha_1\cup\bar{G}_1^\star\alpha_2,[W_1,\partial W_1]\rangle-\langle\bar{G}_2^\star\alpha_1\cup\bar{G}_2^\star\alpha_2,[W_2,\partial W_2]\rangle$$
	$$=\langle\bar{\alpha}_1\cup\bar{\alpha}_2,[W,T_f]\rangle$$
	$$=KS(W,G_w)(\alpha_1\otimes\alpha_2)\in L$$ 
	
	To prove $KS$ is a homomorphism, suppose $f_3,f_4\in I(M,D)$ and $(W_j,\bar{G}_j)$ is a $B$ coboundary of $(T_{f_j},\hat{G})$. Identify $(n,t)\in N_r\times [\frac{1}{3},\frac{2}{3}]\subset T_{f_3}$ and $(n,1-t)\in N_r\times [\frac{1}{3},\frac{2}{3}]\subset T_{f_4}$, define $W':=W_3\cup_{N_r\times [\frac{1}{3},\frac{2}{3}]}W_4$, then $\partial W'=T_{f_3\circ f_4}$. By the $M-V$ sequence of $(W';W_3,W_4)$ there are unique elements $b'_i\in [W',K(\z,2)]$ extending the cohomology factors of $\bar{G}_j$ and
	$$\xymatrix{
		&hofib(p)\ar[d]\ar[r]^=&hofib(p)\ar[d]\\
		W_3\sqcup W_4\ar[r]^{\ \ \bar{G}_3\sqcup \bar{G}_4}\ar[d]&B\ar[r]^{pr}\ar[d]^{p}&K(\z^m,2)\ar[d]^{0\ or\ pr_1(mod2)}\\
		W'\ar[r]^{-[TW']}\ar[rru]^{b'_\star\ \ \ \ }&BSO\ar[r]^{w_2}&K(\z_2,2)
	}$$
	By obstruction theory we can define a $(B,p)$-normal structure $G'_w$ on $W'$ extending $\bar{G}_j$. Consider the following diagram
	
	$$\xymatrix{\oplus_j H^4(W_j,T_{f_j})\ar[r]&\oplus_j H^4(W_j)\ar[r]&\oplus_j H^4(T_{f_j})\\
		H^4(W',T_{f_3}\cup T_{f_4})\ar[r]\ar[u]^\cong \ar[d]&H^4(W')\ar[r]\ar[u]\ar[d]&H^4(T_{f_3}\cup T_{f_4})\ar[u]^{injective}\ar[d]\\
		H^4(W',T_{f_3\circ f_4})\ar[r]&H^4(W')\ar[r]&H^4(T_{f_3\circ f_4})\\
	}$$
	Arguments analog to the proof that $KS$ is well defined, one can show $KS$ is a homomorphism through this diagram. Note that we need $H^4(T_{f_3}\cup T_{f_4})\to \oplus_j H^4(T_{f_j}))$ being injective since we need a preimage of $G'_w\alpha\in H^4(W')$ in $H^4(W',T_{f_3}\cup T_{f_4})$. 
	
	If $M$ is non-spin, by \ref{Kre18} and the following lemma, $KS$ is injective.
	
	If $M$ is spin, by remark\ref{rmk}, $KS(f)=0$ implies a $(B,p)$-coboundary $(W'',\hat{G})$ of $(T_f,\hat{G})$ such that $(W'';M\times [0,\frac{1}{2}],M\times [\frac{1}{2},1])$ is
	an $s$-cobordism. Since the spin structure of $W''$ is compatible to $S^1\times D$ we may extend $S^1\times D$ to be $D^2\times D\subset W$ and define a relative $h$-cobordism. Hence by the peudo-isotopy theorem of Cerf, $f=id\in I(M, D)$. 
	
\end{proof}

\begin{lemma}
	If $X$ is a simply connected non-spin closed manifold and $dimX\leq5$, then the forgetful map $MCG(X,D)\to MCG(X)$ is isomorphism.
\end{lemma}
\begin{proof}
	Surjective follows from the disk theorem of Palais and the isotopy extension theorem, only need to show it is injective. Suppose $f\in Ker(MCG(X,D)\to MCG(X))$ and $i:S^1\times D\to T_f$ the preferred embedding. Since $f$ is isotopic to $id$ we have $T_f\cong X\times S^1=\partial (X\times D^2)$ and $i(S^1\times\{0\})\in X\times D^2$ extend to an embedding $j:D^2\to X\times D^2$ compatible with boundary. Since $D^2$ is contractible the normal bundle of $j(D^2)\in X\times D^2$ is trivial and its trivialization induce a framing of the normal bundle of $i(S^1\times\{0\})\in T_f$. If this framing is not compatible with $i(S^1\times D)$ we may alter $j$ by   taking connected sum with an embedding $S^2$ with non-trivial normal bundle since $X$ is non-spin. Thus $S^1\times D$ extended to $D^2\times D\subset X\times D^2$ and define a relative $h$-cobordism. Hence by the peudo-isotopy theorem of Cerf, $f=id\in MCG(X,D)$. 
\end{proof}

\begin{rmk}\label{S2S4}
	For $N$ a simply connected $6$-manifold with $H^2(M)$ torsionfree and $H^3(M)=0$ (possibly not satisfying $H^2(M)\cup H^2(M)=H^4(M)$), $(B,p)$ and $KS$ are well defined in the same way, but (in general) $KS$ may no longer being injective.
\end{rmk}

For a fixed base $\sigma=(\sigma_1,\sigma_2,..,\sigma_N)$ of $Sym^2K$,  we identify $KS$ as the $KS$-invariant induced by $s$:
$$KS_\sigma=(S_1,S_2,..,S_N) \colon I(N_r,D)\to \mathbb{Z}^N/L_\sigma$$
$$S_i(f)=KS(f)(\sigma_i)$$
$$L_\sigma=\{KS(X)(\sigma_1,\sigma_2,..,\sigma_N):X\in\Omega_8(B,p), Sign(X)=0\}$$
	
Suppose $h\in MCG(M,D)$, then $\hat{g}\circ h:M\to B$ is another normal $2$-type of $M$. By \cite{Kre99} there exist a fiber homotopy equivalence $\hat{h}:B\to B$ covering $p:B\to BSO$ such that $\hat{g}=\hat{h}\circ \hat{g}\circ h$. Thus $\hat{h}$ induce automorphisms of $K$ and $L$. Let $f\in I(M,D)$ and $(T_f,\hat{G})$, $(W,\bar{G})$ be the $(B,p)$ manifold needed in the definition of $KS(f)$. We have.

$$\xymatrix{
	M\ar[r]^h\ar[d]^i&M\ar[d]^i\ar[rdd]^{\hat{g}}&&\\
	T_{h^{-1}\circ f\circ h}\ar[r]^{h\times id}&T_f\ar[d]&&\\
	&W\ar[r]^{\bar{G}}&B\ar[r]^{\hat{h}}&B
	}$$ 
This implies 
\begin{prop}\label{conju}
	$KS(h^{-1}\circ f\circ h)=deg(f)KS(f)\circ\hat{h}^\star$.
\end{prop}

\begin{proof}
	Since $p=\hat{h}\circ p\circ h$, by definition we have 
	$$KS(f)=KS(deg(f)W,\hat{h}\circ\bar{G})$$
	$$=deg(f)KS(W,\bar{G})\hat{h}^\star$$
	$$=deg(f)KS(f)\circ\hat{h}^\star$$
\end{proof}

\subsection{Determination of $I(N_r)$}\label{subsec:det}
 In this subsection we use the generalized Kreck-Stolz invariant developed in the last two section to estimate a lower bound for the subgroup (of $I(N_r)$) generated by $g_1,g_2,g_3$ and an upperbound for the Torelli group when $N_r$ is spin. 
 	
Since we are considering spin case, by Proposition \ref{prop:real3} \ref{Hcp}, we may assume $r=1-4l$ and identify $N_r$ as $\mathbb{P}\gamma_{1,l}$. Let $\{s, t\}$ be its canonical generator (defined in Proposition \ref{Hcp}) and hence a polarization of $N_r$. Recall the definitions in chapter \ref{bri} we have:
$$B=B(2)=K(\z^2,2)\times BSpin,\ p=p_0=\oplus\circ(\star,p)$$
$$\hat{g}=(s,t,g_{Spin}):M\to K(\z^2,2)\times BSpin=B$$
	
Let $x,y\in H^2(B)=H^2(K(\z^2,2))=Hom(\z^2,\z)$ be the elements correspond to the projection to the first and second factor from $\z^2$ to $\z$. Then $H^2(B)=\mathbb{Z}\{x,y\}, H^4(B)=\mathbb{Z}\{x^2,xy,y^2,\hat{p}_1\}$, where $\hat{p}_1$ is the first spin Potrjagin class. The homomorphism $g^* \colon H^4(B) \to H^4(N_r)$ is given by  
		$$\hat{g}^* \colon H^4(B)\to H^4(N_r), (x^2,xy,y^2,\hat{p}_1)\to (s^2,st,st-ls^2,(2l-2)s^2),$$
and in $H^4(N_r)$ we have the relations (see Proposition \ref{Hcp})
$$\hat{g}^*(\hat{p}_1)=-\hat{p}_1(N_r)=(2l-2)s^2, \ \ t^2-ts+ls^2=0.$$
Therefore 
	$$K:=\mathrm{Ker}(\hat{g}^* \colon H^4(B)\to H^4(N_r))=\mathbb{Z}\{\alpha,\beta\}$$
where $\alpha:=y^2-xy+lx^2$,  $\beta:=\hat{p}_1+2(1-l)x^2$, and $Sym^2(K)$ has basis $$\sigma=(\sigma_1,\sigma_2,\sigma_3)=([\alpha\otimes\alpha],[\alpha\otimes\alpha]+[\alpha\otimes\beta],[\beta\otimes\beta]).$$ Thus we have the $KS$-invariant induced by this basis:
	$$KS_\sigma \colon I(N_r,D)\to \mathbb{Z}^3/L_\sigma$$
Which (by Proposition \ref{KS}) is a well defined injective homomorphism.

By applying Proposition \ref{conju}, we implies the third part of our main theorem:

\begin{prop}\label{rconju}
	When $r=1-4l$, the action of $MCG(N_r)/I(N_r)$ on $I(N_r)$ induced by conjugation is given by : $$[h^{-1}\circ f\circ h]=deg(h)[f],\ \ \forall h\in\m(N_r),f\in I(N_r).$$ 
\end{prop}
\begin{proof}
	By identifying $N_r$ as $\mathbb{P}\gamma_{1,l}$, we may identify $MCG(N_r)/I(N_r)$ as $A_{1,l}\subset GL_2(\z)$ via $R_{1,l}$. Then $h\in MCG(N_r)$ defines an automorphism $K(h,2)$ of $K(\z^2,2)$ such that $\pi_2K(h,2)=R_{1,l}(h)$ and the fiber homotopy equivalence $$\hat{h}=(K(h^{-1},2),id):B=K(\z^2,2)\times BSpin\to K(\z^2,2)\times BSpin=B$$ satisfying $\hat{g}=\hat{h}\circ \hat{g}\circ h$. By Proposition \ref{conju} we only need to compute the influence of $\hat{h}$ on $K$.
	
	By Lemma \ref{A+}, $A_{1,l}$ is ether generate by $X_1$ ( if $l\neq 1$ ) or $X_1$ and $H$ ( if $l=1$ ). 
	
	If $R_{1,l}(h^{-1})=X_1$, we have,
	$$\hat{h}^\star(x)=-x,\ \hat{h}^\star(y)=y-x,$$
	$$\hat{h}^\star(\alpha)=(y-x)^2-(-x)(y-x)+lx^2=y^2-xy+lx^2=\alpha$$
	$$\hat{h}^\star(\beta)=\hat{p}_1+2(1-l)(-x)^2=\hat{p}_1+2(1-l)x^2=\beta.$$
	
	If $l=1$ and $R_{1,1}(h^{-1})=X_1$, we have,
	$$\hat{h}^\star(x)=y,\ \hat{h}^\star(y)=x,$$
	$$\hat{h}^\star(\alpha)=(x)^2-(y)(x)+1y^2=y^2-xy+lx^2=\alpha$$
	$$\hat{h}^\star(\beta)=\hat{p}_1+2(1-1)y^2=\hat{p}_1+2(l-1)x^2=\beta.$$
	
	This implies $\hat{h}$ fix $\alpha,\beta$ and thus $K$. The Proposition is then implied from Proposition \ref{conju}.

\end{proof}

 Now we determine the lattice $L_\sigma$. The cup product defines an homomorphism $U:Sym^2K\to H^8(B)$ and a cohomology class $c\in H^8(B)$ defines a characteristic number $c:\Omega_8(B,p)\to \z$ takes $[M,f]$ to $\langle c,f_\star[M]\rangle$. These constructions satisfying
$$KS(X)(s)=U(s)(X),\ \forall X\in \Omega_8(B,p),\ s\in Sym^2K.$$
Thus $L_\sigma$ is exactly the lattice $$\{(U(\sigma_1),U(\sigma_2),U(\sigma_3))(X):X\in \Omega_8(B,p),Sing(X)=0\}.$$

To determine $L_\sigma$, we need to know $\Omega_8(B,p)=\Omega_8^{Spin}(K(\z^2,2))$. The cobordism group $\Omega_8(B,p)$ and the corresponding characteristic numbers are computed in \cite{Xu25}. We present the results in the following proposition. A sketch of the proof is given in the Appendix. 
	
	\begin{prop}\label{XFP}
		$\Omega_8(B,p)$ is a free $\mathbb{Z}$-module with basis $\{b_1,b_2...b_{10}\}$. The characteristic numbers are given in the following table.
\begin{center}		
		\begin{tabular}{|l|c|c|c|c|c|c|c|c|c|c|}
			\hline
		&$\hat{p}_1x^2$&$\hat{p}_1xy$&$\hat{p}_1y^2$&$x^4$&$x^3y$&$x^2y^2$&$xy^3$&$y^4$&$\hat{p}_1^2$&$Sign$\\
		\hline
		$b_1$&$24$&$0$&$0$&$0$&$0$&$0$&$0$&$0$&$0$&$0$\\
		$b_2$&$0$&$12$&$0$&$0$&$0$&$0$&$0$&$0$&$0$&$0$\\
		$b_3$&$0$&$0$&$24$&$0$&$0$&$0$&$0$&$0$&$0$&$0$\\
		$b_4$&$-2$&$0$&$0$&$2$&$0$&$0$&$0$&$0$&$0$&$0$\\
		$b_5$&$0$&$-2$&$0$&$0$&$1$&$0$&$0$&$0$&$0$&$0$\\
		$b_6$&$0$&$6$&$0$&$0$&$0$&$2$&$0$&$0$&$0$&$0$\\
		$b_7$&$0$&$-2$&$0$&$0$&$0$&$0$&$1$&$0$&$0$&$0$\\
		$b_8$&$0$&$0$&$-2$&$0$&$0$&$0$&$0$&$2$&$0$&$0$\\
		$b_9$&$0$&$0$&$0$&$0$&$0$&$0$&$0$&$0$&$2^57$&$0$\\
		$b_{10}$&$0$&$0$&$0$&$0$&$0$&$0$&$0$&$0$&$1$&$1$\\

		\hline
		\end{tabular}
\end{center}		
	\end{prop}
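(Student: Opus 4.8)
The plan is to identify $\Omega_*(B,p)$ with a twisted spin bordism group, compute its rank via the Atiyah--Hirzebruch spectral sequence, rule out torsion, and then determine the lattice of characteristic numbers by pairing explicit generating manifolds against index-theoretic integrality constraints. Since $p\colon B\to BO$ factors through $BSpin\to BO$, a normal $(B,p)$-structure on a closed manifold $M$ is precisely a spin structure on the stable normal bundle together with a map $M\to \mathbb{C}P^\infty\times\mathbb{C}P^\infty$, i.e.\ an ordered pair of classes $x,y\in H^2(M;\z)$. Hence $\Omega_n(B,p)\cong\Omega_n^{Spin}(\mathbb{C}P^\infty\times\mathbb{C}P^\infty)$, and the characteristic numbers in the statement are the associated spin characteristic numbers (using $H^4(BSpin;\z)=\z\{\hat p_1\}$ and $H^*(\mathbb{C}P^\infty;\z)=\z[x]$). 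Because $\mathrm{sign}$ is split surjective on $\Omega_8(B,p)$ (split off by $\mathbb{HP}^2$ with a constant map), it is equivalent to show that $\Omega_8(B,p)$ is free of rank $10$ and that the nine manifolds whose characteristic-number vectors appear in the table form a $\z$-basis of $\Omega_8^0(B,p)$; note that on $\Omega_8^0(B,p)$ the Hirzebruch signature formula expresses $p_2$ as a rational multiple of $\hat p_1^2$, so the nine numbers $\hat p_1^2,\hat p_1 x^2,\hat p_1 xy,\hat p_1 y^2,x^4,x^3y,x^2y^2,xy^3,y^4$ already embed $\Omega_8^0(B,p)$ into $\z^9$ and the task is to determine the image.

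For the rank and torsion I would run the homological AHSS $E^2_{p,q}=H_p(\mathbb{C}P^\infty\times\mathbb{C}P^\infty;\Omega_q^{Spin})$ over the (finite) $8$-skeleton. Since $\Omega_q^{Spin}$ vanishes for $q=3,5,6,7$ and the base has no odd homology, in total degree $8$ the only possibly nonzero differential is $d_2$, which on the bottom rows is the dual of $Sq^2$ precomposed with mod-$2$ reduction; as $H^*(\mathbb{C}P^\infty\times\mathbb{C}P^\infty;\z_2)=\z_2[x,y]$ with $Sq^2(x^ay^b)=a\,x^{a+1}y^b+b\,x^ay^{b+1}$ and $Sq^1 x=Sq^1 y=0$, every such differential is explicit. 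One reads off that the $E^\infty$-page in total degree $8$ is free of ranks $5,3,2$ in filtration degrees $8,4,0$, plus at most a $2$-torsion summand in filtration degree $6$; in particular $\Omega_8(B,p)$ has rank $10$ and $\Omega_8^0(B,p)$ has rank $9$. To see that there is no torsion I would use the Anderson--Brown--Peterson splitting $MSpin_{(2)}\simeq ko\vee\Sigma^8 ko\vee\cdots$, which reduces the $2$-primary torsion of $\Omega_8^{Spin}(\mathbb{C}P^\infty\times\mathbb{C}P^\infty)$ to that of $ko_8$ of a product of complex projective spaces, and control the latter via the (torsion-free) $KO^*$ of complex projective spaces; alternatively one exhibits directly a normal $B$-manifold of infinite order whose class splits off the filtration-$6$ summand, making the relevant extension nontrivial.

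For the identification of the lattice I would proceed in two directions. First, construct the nine generators explicitly---products, low-dimensional spin bundles over $\mathbb{C}P^a\times\mathbb{C}P^b$, $K3$-type $4$-manifolds and spin hypersurfaces, always with $x,y$ pulled back from the two projective factors---and compute their characteristic classes to recover the rows of the table; the visibly (block-)triangular shape then exhibits these nine classes as independent spanning a sublattice of $\z^9$ of the prescribed covolume. Second, show this sublattice is the whole image of $\Omega_8^0(B,p)$ by invoking the necessary integrality conditions from the index theorem: for every virtual bundle $\zeta$ assembled from $L_1,L_2$ (it suffices to take $\zeta\in\{1,L_i,L_i^2,L_1L_2,L_1\otimes\bar L_2,\dots\}$) the twisted Dirac index $\langle\hat A(TM)\,\mathrm{ch}(\zeta),[M]\rangle$ is an integer, and for $\zeta$ of real type the corresponding $KO$-valued index is even; translating these into congruences on the nine characteristic numbers and checking that the constructed generators satisfy them with equality completes the identification.

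The crux, I expect, is the last step together with the no-torsion claim: writing down the explicit generators and the necessary $\hat A$-integrality congruences is routine, but proving that these generators span the \emph{full} lattice---rather than a proper finite-index sublattice---requires a careful matching of covolumes, and this is exactly where the (filtration-$6$, $2$-primary) subtlety in the AHSS has to be pinned down. I anticipate this bookkeeping, rather than any single conceptual obstacle, to be the bulk of the proof, and it is essentially what is carried out in \cite{Xu25} (sketched in the Appendix).
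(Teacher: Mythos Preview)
Your proposal is correct and follows essentially the same route as the paper's Appendix: identify $\Omega_*(B,p)$ with $\Omega_*^{Spin}(\mathbb{C}P^\infty\times\mathbb{C}P^\infty)$, run the Atiyah--Hirzebruch spectral sequence with $d_2$ given by the dual of $Sq^2$, obtain the filtration with a potential $\mathbb{Z}_2$ in filtration~$6$, and then resolve both the extension problem and the lattice determination by constructing explicit spin $8$-manifolds (products of $\mathbb{C}P^1$'s, a quadric $V_2^4\subset\mathbb{C}P^5$, $\mathbb{C}P^1\times\mathbb{C}P^3$, together with $\mathbb{HP}^2$ and the Bott manifold) and computing their characteristic numbers. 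The one genuine variation is your proposed upper bound on the lattice via twisted $\hat A$-integrality and $KO$-theoretic parity constraints: the paper does not invoke this, instead bounding from above purely by the associated graded of the AHSS filtration and then matching the explicit generators against it (this is how the filtration-$6$ $\mathbb{Z}_2$ is seen to be absorbed rather than surviving as torsion). Both methods work; the paper's is more self-contained, while yours via the index theorem would give a cleaner conceptual reason for the specific divisibilities ($24$, $12$, $2$, etc.) appearing in the table.
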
 
	
To compute the range $\z^3 /L_\sigma$ of the $KS$-invariant. First of all, 
from the information given in the above proposition,  by a direct calculation one obtains the following table. 
\begin{center}	
	\begin{tabular}{|l|c|c|c|}
		\hline
		&$U(\sigma_1)$&$U(\sigma_2)$&$U(\sigma_3)$\\
		\hline
		$b_1$&$0$&$24l$&$2^53(1-l)$\\
		$b_2$&$0$&$12$&$0$\\
		$b_3$&$0$&$24$&$0$\\
		$b_4$&$2l^2$&$2l-2l^2$&$2^3(l^2-l)$\\
		$b_5$&$-2l$&$0$&$0$\\
		$b_6$&$4l+2$&$0$&$0$\\
		$b_7$&$-2$&$0$&$0$\\
		$b_8$&$2$&$0$&$0$\\
		$b_9$&$0$&$0$&$2^57$\\
		\hline
	\end{tabular}
\end{center}	 
One may simplify the matrix  by elementary row operations and shows that the lattice $L_\sigma$ is generated  by the row vectors of the following matrix.
\begin{center}
	  \begin{tabular}{|l|c|c|c|}
	 	\hline
	 	&$U(\sigma_1)$&$U(\sigma_2)$&$U(\sigma_3)$\\
	 	\hline	 	
	 	$X_1$&$2$&$0$&$0$\\	 	
	 	$X_2$&$0$&$12$&$0$\\
	 	$X_3$&$0$&$2l-2l^2$&$2^3(l^2-l)$\\
	 	$X_4$&$0$&$0$&$2^53(1-l)$\\
        $X_5$&$0$&$0$&$2^57$\\
	 	\hline
	 \end{tabular}
\end{center}	 
To make a further simplification, let $A=\mathrm{gcd}(6,l^2-l)$ and suppose 
$$A=6B-(l^2-l)C, \ \ 6=AD.$$ 
Then we have an invertible matrix
	$$\begin{pmatrix}
		B&C\\(l^2-l)/A&D
	\end{pmatrix}$$
The lattice $L_\sigma$ is generated by the row vectors of the following table
\begin{center}	
	 \begin{tabular}{|l|c|c|c|}
		\hline
		&$U(\sigma_1)$&$U(\sigma_2)$&$U(\sigma_3)$\\
		\hline	 	
		$X_1$&$2$&$0$&$0$\\	 	
		$BX_2+CX_3$&$0$&$2A$&$8C(l^2-l)$\\
		$(l^2-l)/AX_2+DX_3$&$0$&$0$&$8D(l^2-l)$\\
		$X_4$&$0$&$0$&$2^53(1-l)$\\
		$X_5$&$0$&$0$&$2^57$\\
		\hline
	\end{tabular}
\end{center}	
Since $D\in\{1,3\}$ we have $\mathrm{gcd}(8D(l^2-l),2^53(1-l),2^57)=8\mathrm{gcd}(28,4l-4,l^2-l)$ is a factor of $8C(l^2-l)$. This gives a further reduction. The lattice $L$ is generated by the row vectors of the following table
\begin{center}	
	\begin{tabular}{|l|c|c|c|}
		\hline
		&$U(\sigma_1)$&$U(\sigma_2)$&$U(\sigma_3)$\\
		\hline	 	
		$X_1$&$2$&$0$&$0$\\	 	
		$Y_2$&$0$&$2\mathrm{gcd}(6,l^2-l)$&$0$\\
		$Y_3$&$0$&$0$&$8\mathrm{gcd}(28,4l-4,l^2-l)$\\
		\hline
	\end{tabular}
\end{center}		
	
\begin{prop}
 $L_\sigma=2\mathbb{Z}\oplus2\mathrm{gcd}(6,l^2-l)\mathbb{Z}\oplus8\mathrm{gcd}(28,4l-4,l^2-l)\mathbb{Z}$. 
\end{prop}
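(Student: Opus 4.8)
The plan is to finish the lattice reduction begun above. Recall that, starting from Proposition \ref{XFP}, one expands the classes $s_1 = \alpha^2$, $s_2 = \alpha^2 + \alpha\beta$, $s_3 = \beta^2$ as polynomials in the generators $x, y, \hat p_1$ of $H^*(B)$ (using $\alpha = y^2 - xy + lx^2$, $\beta = \hat p_1 + 2(1-l)x^2$) and pairs them with the characteristic numbers of the basis $\{b_1, \dots, b_9\}$; the resulting table has rows spanning $L$. What remains is to check that the indicated sequence of operations is lattice-preserving and ends in a diagonal matrix.

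I would first isolate the two arithmetic facts on which the simplifications rest. Since $l^2 - l = l(l-1)$ is a product of consecutive integers it is even, so $A = \gcd(6, l^2 - l)$ is $2$ or $6$ and hence $D = 6/A \in \{1, 3\}$; in particular $D$ is coprime to $28$. And $h := \gcd(28, 4l - 4, l^2 - l)$ divides $l^2 - l$, hence $8h \mid 8C(l^2 - l)$ for every $C \in \mathbb{Z}$.

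Next come the three reductions, each of which I would verify in turn. The passage from the nine-row table to $\{X_1, \dots, X_5\}$ is a sequence of integer row operations: the $b_8$-row is $(2, 0, 0)$ and, since every first-column entry is even, it clears that column — killing the $b_5$-, $b_6$-, $b_7$-rows and sending the $b_4$-row to $X_3$ — and then the $b_2$-row $(0, 12, 0) = X_2$ clears the second column of the $b_1$- and $b_3$-rows, leaving $X_4$ and $X_5$. The second reduction replaces $(X_2, X_3)$ by $\bigl(BX_2 + CX_3,\ \tfrac{l^2 - l}{A}X_2 + DX_3\bigr)$, where $A = 6B - (l^2 - l)C$ and $6 = AD$; the associated integer matrix $\bigl(\begin{smallmatrix} B & C \\ (l^2-l)/A & D \end{smallmatrix}\bigr)$ has determinant $1$ — multiply through by $A$ and use the two identities — so it is a basis change of the rank-two sublattice it acts on, and those same identities produce the new second-column entries $2A$ and $0$. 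For the third reduction, the three rows that are now zero in the first two columns span, in the last coordinate, the subgroup generated by $\gcd\bigl(8D(l^2-l),\, 2^53(1-l),\, 2^57\bigr)$; extracting a factor $8$ and using that $3$ and $D$ are units modulo $28$, this equals $8h = 8\gcd(28, 4l - 4, l^2 - l)$, so those three rows collapse to $(0, 0, 8h)$, and since $8h \mid 8C(l^2 - l)$ one further row operation clears the last off-diagonal entry $8C(l^2-l)$.

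At this point the generating matrix is diagonal with entries $2$, $2A = 2\gcd(6, l^2 - l)$, $8h = 8\gcd(28, 4l - 4, l^2 - l)$, so $L$ is precisely the orthogonal direct sum $2\mathbb{Z} \oplus 2\gcd(6, l^2 - l)\mathbb{Z} \oplus 8\gcd(28, 4l - 4, l^2 - l)\mathbb{Z}$, as claimed. The only part that is not routine bookkeeping is the last step — clearing the off-diagonal entry and identifying the final third-column generator — which is exactly where the parity of $l(l-1)$ (forcing $D \in \{1, 3\}$) and the divisibility $h \mid l^2 - l$ enter; everything before it is a check that the stated integer row operations were carried out correctly.
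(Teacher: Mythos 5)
Your proposal is correct and follows essentially the same route as the paper: the same reduction of the nine characteristic-number rows to $X_1,\dots,X_5$, the same unimodular change of basis via $A=6B-(l^2-l)C$, $6=AD$, and the same gcd argument using $D\in\{1,3\}$ coprime to $28$ and $8\gcd(28,4l-4,l^2-l)\mid 8C(l^2-l)$ to diagonalize the generating matrix. The only difference is that you spell out the verifications (evenness of $l^2-l$, determinant $1$, the divisibility clearing the off-diagonal entry) that the paper leaves implicit.
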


The final step in the determination of $I(N_r)$ is to compute the orders of the generators $g_1$ and $g_2$, via $KS_\sigma$. Before doing this, let us recall how \cite[Theorem 7.9 and 7.12]{KreSu} detect $g_1, g_2, g_3\in\m_\partial(S^2\times D^4)$ using $KS$-invariant on $I(S^2\times S^4,D)$.

By Remark \ref{S2S4}, consider $N=S^2\times S^4, (B',p')=(B(1),p_0)$. Then $H^4(B')=\mathbb{Z}\{\alpha',\}$, where $\beta'$ is the first spin Potrjagin class and $\alpha'$ is the square of a generator of $H^2(B')$. Direct computation shows$$K':=Ker(\hat{g}^\star:H^4(B')\to H^4(S^2\times S^4))=H^4(B').$$ thus $Sym^2(K')$ has basis $$\sigma'=(\sigma'_1,\sigma'_2,\sigma'_3):=([\alpha'\otimes\alpha'],[\alpha'\otimes\alpha']+[\alpha'\otimes\beta'],[\beta'\otimes\beta']).$$ Thus we have the $KS$-invariant induced by this basis:
$$KS_\sigma' \colon I(S^2\times S^4,D)\to \mathbb{Z}^3/L'_\sigma$$

Using the natural decomposition $S^2 \times S^4 = (S^2 \times D^4)_+ \cup (S^2 \times D^4)_-$, one has a homomorphism  $\m_{\partial}(S^2 \times D^4) \to I(S^2 \times S^4, D)$ defined by ``extension by the identity on $(S^2 \times D^4)_-$", where $I(S^2 \times S^4,D)$ is the Torreli group of $S^2 \times S^4$ relative to a fixed $6$-dimensional disk in  $(S^2 \times D^4)_-$. Thus we have:
$$\xymatrix{E':\m_{\partial}(S^2 \times D^4)\ar[r]&I(S^2 \times S^4, D)\ar[r]^{\ \ KS_\sigma'}&\mathbb{Z}^3/L'_\sigma}$$

It is shown in \cite[Theorem 6.2, 7.9]{KreSu} that the ``extension by the identity on $(S^2 \times D^4)_-$" map induce an isomorphism $\m_{\partial}(S^2 \times D^4) \cong I(S^2 \times S^4, D)$ and
$$L'_\sigma=2\mathbb{Z}\oplus2\cdot12\mathbb{Z}\oplus8\cdot28\mathbb{Z},$$
$$E'(ag_3+bg_2+cg_1)=[(a,2b,8c)].$$

Let $S^2 \times D^4 \subset \mathbb{P}(\gamma_{1,l})=N_r$ be a local trivialization which has no intersection with the fixed disk $D$. It induces a homomorphism 	$\m_\partial(S^2\times D^4)\to I(N_r,D)$. Denote the composition of this homomorphism and $KS_\sigma$ by
$$E \colon \m_\partial(S^2\times D^4) \to \z^3/L_\sigma.$$

\begin{prop}\label{hom}
		$$E(ag_3+bg_2+cg_1)=[(a,2b,8c)]\in \mathbb{Z}_2\oplus\mathbb{Z}_{2\mathrm{gcd}(6,l^2-l)}\oplus\mathbb{Z}_{8\mathrm{gcd}(28,4l-4,l^2-l)}. $$
	\end{prop}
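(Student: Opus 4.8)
The plan is to identify $E$ with the composite $\pi\circ S\circ\Psi$, where $\Psi\colon\m_\partial(S^2\times D^4)\xrightarrow{\ \cong\ }I(S^2\times S^4,D)$ is the isomorphism ``extend by the identity on $(S^2\times D^4)_-$'' recalled above, and $\pi\colon\z^3/L'\to\z^3/L$ is the canonical projection, which makes sense because $L'=2\z\oplus 24\z\oplus 224\z\subseteq L$ (using $2\,\mathrm{gcd}(6,l^2-l)\mid 12$ and $8\,\mathrm{gcd}(28,4l-4,l^2-l)\mid 224$). Granting this, the formula $S(ag_3+bg_2+cg_1)=[(a,2b,8c)]$ together with the description $L=2\z\oplus 2\,\mathrm{gcd}(6,l^2-l)\z\oplus 8\,\mathrm{gcd}(28,4l-4,l^2-l)\z$ obtained above gives exactly the asserted value of $E$.

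For the identification, the decisive remark is that $\alpha=y^2-xy+lx^2$ and $\beta=\hat p_1+2(1-l)x^2$ already vanish on $N_r$, since by construction they generate $\ker(\hat g^*\colon H^4(B)\to H^4(N_r))$. As $H^3(N_r)=0$ and $\text{ext}(h)$ is a Torelli diffeomorphism, the Wang sequence gives $H^4(T^{N_r}_{\text{ext}(h)})\cong H^4(N_r)$, so $\hat G^*\alpha$ and $\hat G^*\beta$ vanish on the mapping torus; the same computation (now with $H^4=0$) shows they vanish on $T^{S^2\times D^4}_h$. If one extends the normal $B'$-structure used in the definition of $S$ to a normal $B$-structure on $S^2\times S^4$ whose first $\mathbb{C}P^\infty$-factor is trivial, then on $S^2\times S^4$ and on the trivialisation patch $S^2\times D^4\subset N_r$ one has $x=0$, so $\alpha$ restricts to $(x')^2=\alpha'$ and $\beta$ to $\hat p_1=\beta'$. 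Thus the classes entering the two invariants agree on the ``core'' $S^2\times D^4$, and both are trivial on all of $N_r$ (resp.\ $S^2\times S^4$).

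Concretely: since $\Omega_6^{\mathrm{SO}}=0$, the complement $C=N_r\setminus\mathring{i(S^2\times D^4)}$ is oriented-cobordant to $(S^2\times D^4)_-$ rel their common boundary $S^2\times S^3$; choosing such a $7$-dimensional cobordism $Q_0$ and carrying $h$ along by the identity, one obtains a cobordism $\mathcal Z$ from $T^{S^2\times S^4}_{\Psi(h)}$ to $T^{N_r}_{\text{ext}(h)}$ that is a product over the twisted core $T^{S^2\times D^4}_h$ and equals $Q_0\times S^1$ over the complement. As $\Omega_7(B,p)=0$ (Theorem 6 of \cite{Kre18}), $\mathcal Z$ carries a normal $B$-structure extending $\hat G$ on the $T^{N_r}$-end and the chosen extended structure on the $T^{S^2\times S^4}$-end; one arranges that $\alpha,\beta$ restrict to zero on $\partial Q_0$ and on $T^{S^2\times D^4}_h$. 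Since $\dim Q_0=7$ forces $H^8(Q_0,\partial Q_0)=0$, and the relative lifts on $T^{S^2\times D^4}_h\times I$ live in $H^4(T^{S^2\times D^4}_h\times I,\partial)\cong H_4(S^2\times S^1)=0$, every degree-$8$ characteristic number in $\alpha,\beta$ vanishes on $\mathcal Z$. Hence, if $V$ is a signature-zero $B$-coboundary of $T^{S^2\times S^4}_{\Psi(h)}$ realising $S(\Psi(h))$ (with the extended $B$-structure), then $W=V\cup_{T^{S^2\times S^4}_{\Psi(h)}}\mathcal Z$ is a $B$-coboundary of $T^{N_r}_{\text{ext}(h)}$ with $\mathrm{sign}(W)=\mathrm{sign}(V)+\mathrm{sign}(\mathcal Z)$, and additivity of fundamental classes gives $KS(\text{ext}(h))\equiv S(\Psi(h))$ modulo the lattice, provided $\mathrm{sign}(\mathcal Z)=0$; this is exactly $E(h)=\pi\bigl(S(\Psi(h))\bigr)$ in $\z^3/L$.

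The step I expect to be the main obstacle is the vanishing $\mathrm{sign}(\mathcal Z)=0$: $\mathcal Z$ is the mapping torus of a self-diffeomorphism of the compact manifold-with-boundary $P=(S^2\times D^4\times I)\cup_{S^2\times S^3\times I}Q_0$, and one must see that the total space of such a fibre bundle over $S^1$ has vanishing signature --- the ``$d\theta$-part'' of the image of $H^4(\mathcal Z,\partial\mathcal Z)\to H^4(\mathcal Z)$ should be a Lagrangian for the intersection form, by the relative Wang sequence and Poincar\'e--Lefschetz duality on $P$; presented instead as $(T^{S^2\times D^4}_h\times I)\cup(Q_0\times S^1)$ glued along the corner $S^2\times S^3\times S^1\times I$, the same fact requires checking that Wall's non-additivity term there vanishes. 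A secondary point is the compatibility of the $B$-structures of $V$ and $\mathcal Z$ along $T^{S^2\times S^4}_{\Psi(h)}$ (insisting that the $x$-class be trivial on the core and equal $\hat g^*s$ on $C$, which is possible because $s$ restricts to $0$ on the fibre boundary $S^2\times S^3$). As a sanity check one can evaluate $E(g_1)$ by hand: $T^{N_r}_{g_1}$ differs from $N_r\times S^1$ by a connected sum with a generator of $\Theta_7\cong\z_{28}$, a coboundary is $(N_r\times D^2)\,\natural\,P_8\,\natural\,(\text{copies of }\mathbb{HP}^2\setminus\mathring D^8)$ with $P_8$ the $E_8$-plumbing, and since $\alpha,\beta$ vanish on $N_r\times D^2$ and on the parallelisable $P_8$ while $\beta^2=\hat p_1^2$ contributes the generator on each $\mathbb{HP}^2$-summand, this reproduces verbatim the computation giving $S(g_1)=[(0,0,8)]$, hence $E(g_1)=[(0,0,8)]$.
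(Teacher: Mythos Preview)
Your overall strategy---identify $E$ with $\pi\circ S\circ\Psi$ by building a $B$-coboundary of $T_{e(f)}$ out of a given $B'$-coboundary $V$ of $T_{\Psi(f)}$, then observing that $i_B^*\alpha=\alpha'$ and $i_B^*\beta=\beta'$---is exactly the paper's. The difference is in the construction of the coboundary, and the paper's is considerably simpler.

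Rather than manufacturing a cobordism $\mathcal Z$ between the two mapping tori via an auxiliary $7$-manifold $Q_0$, the paper glues $N_r\times D^2$ directly to $V$. Since $f$ is the identity on $(S^2\times D^4)_-$, one identifies $(S^2\times D^4)_-\times S^1\subset T_{\Psi(f)}=\partial V$ with $i(S^2\times D^4)\times S^1\subset N_r\times S^1=\partial(N_r\times D^2)$ and sets
\[
W=(N_r\times D^2)\ \cup_{(S^2\times D^4)\times S^1}\ V.
\]
Then $\partial W=(C\times S^1)\cup T^{S^2\times D^4}_f=T_{e(f)}$ on the nose. The $B$-structure on $W$ is simply $(\hat g\times\mathrm{id}_{D^2})\cup i_{B*}G_V$, with no extension problem to solve. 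Because $\alpha,\beta\in\ker\hat g^*$, the relative lifts $\bar\alpha,\bar\beta$ may be taken to vanish on the $N_r\times D^2$ piece, so the characteristic numbers computed on $W$ equal those on $V$, giving $KS(e(f))=S(f)$ and hence the formula.

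This bypasses both of the obstacles you flagged. There is no $Q_0$, so no question of endowing an abstract oriented cobordism with a compatible $B$-structure; and there is no $\mathcal Z$ whose signature must be shown to vanish. (Incidentally, your appeal to $\Omega_7(B,p)=0$ to put a $B$-structure on $\mathcal Z$ is a non sequitur: vanishing of that bordism group says closed $7$-dimensional $B$-manifolds bound $B$-manifolds, not that a given $8$-manifold with prescribed boundary $B$-structure admits an extending $B$-structure. You would instead need to argue via obstruction theory for the map to $\mathbb{CP}^\infty\times\mathbb{CP}^\infty$ and choose $Q_0$ spin from the start using $\Omega_6^{Spin}=0$.) The paper's shortcut also makes the signature bookkeeping trivial: the map $H^4(N_r\times D^2,\partial)\to H^4(N_r\times D^2)$ is zero since restriction $H^4(N_r)\to H^4(N_r\times S^1)$ is injective, so $\mathrm{sign}(N_r\times D^2)=0$.

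Your sanity check for $E(g_1)$ is fine and indeed matches what the direct construction gives.
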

	\begin{proof}
		The statement of the proposition follows from the following commutative diagram 
        $$\xymatrix{\m_{\partial}(S^2 \times D^4)\ar[r]^{E'}\ar[d]^e&\mathbb{Z}_2\oplus2\mathbb{Z}_{2\cdot12}\oplus8\mathbb{Z}_{8\cdot28}\ar[d]^{\mathrm{id}_{\mathbb{Z}^3}}\\I(N_r,D)\ar[r]^{KS_\sigma \ \ \ \ \ \ \ \ \ \ \ \ \ \ \ \ }&\mathbb{Z}_2\oplus\mathbb{Z}_{2\mathrm{gcd}(6,l^2-l)}\oplus\mathbb{Z}_{8\mathrm{gcd}(28,4l-4,l^2-l)}
        }$$
where $e$ is the ``extension by the identity" homomorphism through a fixed local trivalization $S^2\times D^4\subset N_r$; the right hand side vertical map is induced by the identity of $\z^3$, and is indeed the quotient map on each direct summand.
        
Now we give a proof of the commutativity of the diagram. Let $i_B \colon B'=\{*\}\times K(\z,2)\times BSpin\to K(\z,2)\times K(\z,2)\times BSpin=B$ be the inclusion map. Since it is compatible with the projections $p$ and $p'$,  $i_{B*}$ takes a $B'$-manifold to a $B$-manifold. Given $f\in I(S^2\times S^4,D)$, since it is in the image of $\m_{\partial}(S^2 \times D^4)$, we may assume that $f$ is the identity on $(S^2 \times D^4)_-$. Let $(V,G_V)$ be the $B'$-coboundary needed in computing $KS_\sigma'(f)$, identify $(S^2\times D^4)_-\times S^1\subset T_f=\partial V$ with $S^2\times D^4\times S^1\in \partial(N_r\times D^2)$ and obtain a coboundary $W=(N_r\times D^2)\cup_{S^2\times D^4\times S^1}V$. We have $\partial W=T_{e(f)}$ and similar argument analog to the proof of Proposition \ref{KS} shows that there is a normal $B$-structure $G_w$ on $W$ extending $i_B(V,G_v)$ and $(N_r,\hat g)\times D^2$. Also similar to Proposition \ref{KS} the $(N_r,\hat g)\times D^2$ part doesn't contribute when we computing $KS_\sigma(e(f))$ using $(W,G_w)$. Since $\alpha'=i_B^*\alpha, \beta'=i_B^*\beta$
        we have $KS_\sigma(e(f))=KS_\sigma'(f)$ and the proposition is proved.
	\end{proof}	

The proof of Theorem \ref{Mth} is completed by the following propositions.	

\begin{prop}
	For $r=1-4l$, $$\mathrm{gcd}(6,l^2-l)\mathrm{gcd}(28,l^2-l,4l-4)\leq |I(N_r)|\leq32\mathrm{gcd}(6,l^2-l)\mathrm{gcd}(28,l^2-l,4l-4).$$ 
\end{prop}
\begin{proof}
	The above proposition shows $$16^{-1}|\z^3/L_\sigma|\leq|I(N_r,D)|\leq|\z^3/L_\sigma|=32\mathrm{gcd}(6,l^2-l)\mathrm{gcd}(28,l^2-l,4l-4).$$
	By \cite{Wa63} the kernel of $I(N_r,D)\to I(N_r)$ is isomorphic to $\{0\}$ or $\mathbb{Z}_2$, thus
	$$2^{-1}|I(N_r,D)|\leq I(N_r)\leq|I(N_r,D)|.$$
	This prove the proposition.
\end{proof}
	
    \begin{prop}
	For $r=8n+5$,
		$$I(N_r)=\mathbb{Z}_{2\mathrm{\mathrm{gcd}}(3,2n^2+3n+1)}\{g_2\}\oplus \mathbb{Z}_{2\mathrm{gcd}(14,n+1)}\{g_1\}$$ 
	\end{prop}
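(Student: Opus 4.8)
The plan is to determine $I(N_r)$ in two stages: first pin down the relative Torelli group $I(N_r,D)$ using the injective invariant $KS$ of Proposition~\ref{KS} together with the values of the composite $E$ recorded in Proposition~\ref{hom}, and then descend along the forgetful epimorphism $I(N_r,D)\to I(N_r)$, whose kernel has order at most $2$ by \cite{Wa63}.

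First I would carry out the arithmetic translation. Writing $N_r=\mathbb{P}(\gamma_{1,l})$, so that $r=1-4l$, the hypothesis $r=8n+5$ gives $l=-(2n+1)$; hence $l^2-l=l(l-1)=2(n+1)(2n+1)=2(2n^2+3n+1)$ and $4l-4=-8(n+1)$. A short computation, using that $2n+1$ is odd, then yields
$$\mathrm{gcd}(6,l^2-l)=2\,\mathrm{gcd}(3,2n^2+3n+1),\qquad \mathrm{gcd}(28,4l-4,l^2-l)=2\,\mathrm{gcd}(14,n+1),$$
so the target $\mathbb{Z}^3/L$ of $KS$ becomes $\mathbb{Z}_2\oplus\mathbb{Z}_{4\,\mathrm{gcd}(3,2n^2+3n+1)}\oplus\mathbb{Z}_{16\,\mathrm{gcd}(14,n+1)}$. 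Now regard $g_1$, $g_2$ and $g_3$ as elements of $I(N_r,D)$ through a local trivialization $S^2\times D^4\subset N_r$ disjoint from $D$; since $E$ is $KS$ precomposed with $\m_\partial(S^2\times D^4)\to I(N_r,D)$, Proposition~\ref{hom} gives $KS(g_1)=[(0,0,8)]$, $KS(g_2)=[(0,2,0)]$ and $KS(g_3)=[(1,0,0)]$. Because $KS$ is injective on $I(N_r,D)$, and because $[(0,0,8)]$ and $[(0,2,0)]$ lie in distinct cyclic summands, of orders $2\,\mathrm{gcd}(14,n+1)$ and $2\,\mathrm{gcd}(3,2n^2+3n+1)$ respectively, I would conclude that
$$\langle g_1,g_2\rangle\ \cong\ \mathbb{Z}_{2\,\mathrm{gcd}(3,2n^2+3n+1)}\{g_2\}\oplus\mathbb{Z}_{2\,\mathrm{gcd}(14,n+1)}\{g_1\}\quad\text{inside }I(N_r,D).$$

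Next I would descend to $I(N_r)$. Both $I(N_r,D)$ and $I(N_r)$ are abelian --- the former because $KS$ embeds it in $\mathbb{Z}^3/L$, the latter because by Proposition~\ref{sur} it is generated by the commuting elements $\bar g_1,\bar g_2$. Let $\pi\colon I(N_r,D)\to I(N_r)$ be the forgetful epimorphism and $K=\ker\pi$, which is trivial or $\cong\mathbb{Z}_2$ by \cite{Wa63}. Since $\pi$ is onto and $I(N_r)=\langle\bar g_1,\bar g_2\rangle$ with $\bar g_i=\pi(g_i)$, we have $I(N_r,D)=\langle g_1,g_2\rangle+K$, so the composite $\langle g_1,g_2\rangle\hookrightarrow I(N_r,D)\xrightarrow{\ \pi\ }I(N_r)$ is surjective with kernel $\langle g_1,g_2\rangle\cap K$. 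The crux is that this intersection vanishes, and this is exactly where the auxiliary twist $g_3$ is used: if $\langle g_1,g_2\rangle\cap K\neq 0$ then $K\cong\mathbb{Z}_2$ would be contained in $\langle g_1,g_2\rangle$, forcing $I(N_r,D)=\langle g_1,g_2\rangle$ and in particular $g_3\in\langle g_1,g_2\rangle$; applying the injective $KS$, the class $[(1,0,0)]$ would then lie in the subgroup generated by $[(0,0,8)]$ and $[(0,2,0)]$, which is impossible since every element of that subgroup has vanishing first coordinate in $\mathbb{Z}_2$. Hence the composite is an isomorphism, $I(N_r)\cong\langle g_1,g_2\rangle$, and the stated formula follows.

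The step I expect to be the main obstacle is this last one: controlling the order-$\le 2$ kernel $K=\ker(I(N_r,D)\to I(N_r))$ and verifying that it cannot collapse $\langle g_1,g_2\rangle$. The Dehn twist $g_3$, which is detected by the $\mathbb{Z}_2$-coordinate of $KS$ even though it plays no role in generating $I(N_r)$, is precisely the device that rules this out, without our ever having to identify $K$ explicitly. Beyond that the argument is routine; the only care needed is bookkeeping: keeping the two incarnations of $g_1$ (the disk-supported generator of $\Theta_7$ and its image in $I(N_r,D)$), and of $g_2$, consistent, and carrying out the gcd simplifications correctly in terms of $n$.
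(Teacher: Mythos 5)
Your proposal is correct and takes essentially the same route as the paper: translate the gcd's via $l=-(2n+1)$, read off $KS(g_1)=[(0,0,8)]$, $KS(g_2)=[(0,2,0)]$, $KS(g_3)=[(1,0,0)]$ from Proposition \ref{hom} to identify $\langle g_1,g_2\rangle\subset I(N_r,D)$, use Proposition \ref{sur} for surjectivity onto $I(N_r)$, and control the kernel of $I(N_r,D)\to I(N_r)$ of order at most $2$ via \cite{Wa63}. The only difference is that you make explicit (through the $\mathbb{Z}_2$-coordinate detecting $g_3$) the argument that this kernel cannot meet $\langle g_1,g_2\rangle$, a step the paper compresses into its final ``Hence''; this is exactly the intended reasoning.
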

	\begin{proof}
		Since $N_r=\mathbb{P}\gamma_{1,l}$ we have $l=-2n-1$. Therefore 
		$$\mathrm{gcd}(6,l^2-l)=2\mathrm{gcd}(3,2n^2+3n+1)$$
		 $$\mathrm{gcd}(28,4l-4,l^2-l)=\mathrm{gcd}(28,8(n+1),2(2n+1)(n+1))=2\mathrm{gcd}(14,n+1).$$
By Proposition \ref{KS} and \ref{hom}, $I(N_r,D)$ contains a subgroup 
$$\mathbb{Z}_2\{g_3\}\oplus\mathbb{Z}_{2\mathrm{gcd}(3,2n^2+3n+1)}\{g_2\}\oplus \mathbb{Z}_{2\mathrm{gcd}(14,n+1)}\{g_1\}.$$ By Proposition \ref{sur}, the subgroup $\mathbb{Z}_{2\mathrm{gcd}(3,2n^2+3n+1)}\{g_2\}\oplus \mathbb{Z}_{2\mathrm{gcd}(14,n+1)}\{g_1\}$ is mapped onto $I(N_r)$. By \cite{Wa63} the kernel of $I(N_r,D)\to I(N_r)$ is isomorphic to $\{0\}$ or $\mathbb{Z}_2$. Hence $\mathbb{Z}_{2\mathrm{gcd}(3,2n^2+3n+1)}\{g_2\}\oplus \mathbb{Z}_{2\mathrm{gcd}(14,n+1)}\{g_1\}\to I(N_r)$ is an isomorphism. 
	\end{proof}
	
	\begin{cor}
		For $r=8n+5$,
		$$I^h(N_r)=\mathbb{Z}_{2\mathrm{\mathrm{gcd}}(3,2n^2+3n+1)}\{g_2\}$$
	\end{cor}
	\begin{proof}
		By Proposition \ref{sur}, Lemma \ref{t7} and the exact sequence before lemma \ref{t7} $I^h(N_r)\cong I(N_r)/\langle g_1\rangle$.
	\end{proof}
	
	\section{Appendix}
	
The bordism group $\Omega_8(B(2),p_0)=\Omega_8^{Spin}(K(\z^2,2))$ was computed in \cite{Xu25}. In this appendix we give another proof avoiding the use of Adams spectral sequence.

Let's first recall some known results of the spin bordism group $\Omega_*^{Spin}$ \cite[p.92]{LaMi}:
\begin{center}		
	\begin{tabular}{|l|c|c|c|c|c|c|c|c|c|}
		\hline
		$i$&$0$&$1$&$2$&$3$&$4$&$5$&$6$&$7$&$8$\\
		\hline
		$\Omega_i^{Spin}$&$\z$&$\z_2$&$\z_2$&$0$&$\z$&$0$&$0$&$0$&$\z^2$\\
		\hline
	\end{tabular}
\end{center}
generators of $\Omega_4^{Spin}$	and $\Omega_8^{Spin}$ are also given in \cite[p.92]{LaMi}, they can be detected by cohomology classes of $BSpin$ and signature:
$$\xymatrix{\hat{p}_1:\Omega_4^{Spin}\ar[r]^{\ \ \ \cong}&24\z}$$
$$\xymatrix{(\hat{p}_1^2,Sign-\hat{p}_1^2):\Omega_8^{Spin}\ar[r]^{\ \ \ \ \ \ \ \ \ \cong}&\z\oplus224\z}$$

Since the Atiyah-Hirzebruch spectral sequence of the functor $\Omega_\star^{Spin}(-):Top\to Ab_\star$ is natural, the $E^2$-page differentials (of AHSS) are natural transformations $d^2_{p,q}:H_p(-,\Omega_q^{Spin})\to H_{p-2}(-,\Omega_{q+1}^{Spin})$. And \cite[p.273]{Zh01} shows: $$d^2_{p,0}:H_8(K(\z^2,2),\Omega_0^{Spin})\to H_{p-2}(K(\z^2,2),\Omega_1^{Spin})$$
$$d^2_{p,1}:H_8(K(\z^2,2),\Omega_1^{Spin})\to H_{p-2}(K(\z^2,2),\Omega_2^{Spin})$$
can be identify as:
$$\xymatrix{d^2_{8,0}:H_8(K(\z^2,2),\mathbb{Z})\ar[r]^{\rho_2}&H_8(K(\z^2,2),\mathbb{Z}_2)\ar[r]^{Sq^{2*}}&H_6(K(\z^2,2),,\mathbb{Z}_2)}$$
$$\xymatrix{d^2_{8,1}:H_8(K(\z^2,2),\mathbb{Z}_2)\ar[r]^{Sq^{2*}}&H_6(K(\z^2,2),\mathbb{Z}_2)}$$
Here $Sq^{2*}$ is the dual of the Steenrod square $Sq^2$ and $\rho_2$ is the "reduction modulo 2".

	\begin{proof}[Proof of Proposition \ref{XFP}]
The bordism group $\Omega_8(B,p)$ is the spin bordism group $\Omega_8^{Spin}(K(\z^2,2))$. The Atiyah-Hirzebruch spectral sequence for this generalized homology group has $E^2$-terms  $E_{p,q}^2=H_p(K(\z^2,2),\Omega_q^{Spin})$, thus the $E^2$-page looks like 
\begin{center}		
		\begin{tabular}{|l|c|c|c|c|c|c|c|c|c|c|}
			$8$&$\mathbb{Z}^2$&$0$&$\mathbb{Z}^4$&$0$&$\mathbb{Z}^6$&$0$&$\mathbb{Z}^8$&$0$&$\mathbb{Z}^{10}$&$0$\\
			$7$&$0$&$0$&$0$&$0$&$0$&$0$&$0$&$0$&$0$&$0$\\
			$6$&$0$&$0$&$0$&$0$&$0$&$0$&$0$&$0$&$0$&$0$\\
			$5$&$0$&$0$&$0$&$0$&$0$&$0$&$0$&$0$&$0$&$0$\\
			$4$&$\mathbb{Z}$&$0$&$\mathbb{Z}^2$&$0$&$\mathbb{Z}^3$&$0$&$\mathbb{Z}^4$&$0$&$\mathbb{Z}^5$&$0$\\
			$3$&$0$&$0$&$0$&$0$&$0$&$0$&$0$&$0$&$0$&$0$\\
			$2$&$\mathbb{Z}_2$&$0$&$\mathbb{Z}_2^2$&$0$&$\mathbb{Z}_2^3$&$0$&$\mathbb{Z}_2^4$&$0$&$\mathbb{Z}_2^5$&$0$\\
		    $1$&$\mathbb{Z}_2$&$0$&$\mathbb{Z}_2^2$&$0$&$\mathbb{Z}_2^3$&$0$&$\mathbb{Z}_2^4$&$0$&$\mathbb{Z}_2^5$&$0$\\
			$0$&$\mathbb{Z}$&$0$&$\mathbb{Z}^2$&$0$&$\mathbb{Z}^3$&$0$&$\mathbb{Z}^4$&$0$&$\mathbb{Z}^5$&$0$\\
			\hline
			&$0$&$1$&$2$&$3$&$4$&$5$&$6$&$7$&$8$&$9$\\
			
			\hline
		\end{tabular}
\end{center}
		
The coefficient group $\Omega_8^{Spin}\to \Omega_8^{Spin}(K(\z^2,2))$ is a direct summand. One only needs to analyze the following differentials
		$$d^2_{8,0}:H_8(K(\z^2,2),\Omega_0^{Spin})\to H_6(K(\z^2,2),\Omega_1^{Spin})$$
		$$d^2_{8,1}:H_8(K(\z^2,2),\Omega_1^{Spin})\to H_6(K(\z^2,2),\Omega_2^{Spin})$$
		By \cite{Zh01}, they identify as $d^2_{8,0}=Sq^{2*}\circ \rho_2$ and $d^2_{6,0}=Sq^{2*}$.
		
		By direct computation of the above differential, there is a filtration $\Omega_8^{Spin}=F_0\subset F_4\subset F_6\subset F_8=\Omega_8^{Spin}(K(\z^2,2))$ with quotient groups
		$$F_8/F_6=Kerd^2_{8,0}=\mathbb{Z}\{2x^{4*},x^3y^*,2x^2y^{2*},xy^{3*},2y^{4*}\}\cong \mathbb{Z}^5$$
		$$F_6/F_4=coKerd^2_{8,1}=\mathbb{Z}_2\{x^2y^*-xy^{2*}\}\cong\mathbb{Z}_2$$
		$$F_4/F_0=H^4(K(\z^2,2),\Omega_4^{Spin})\cong\mathbb{Z}^3$$
		$$F_0=\Omega_8^{Spin}=\mathbb{Z}\{\mathbb{HP}^2,Bott\}\cong\mathbb{Z}^2$$
		Here (by considering the "Hurewicz natural transformation" $\Omega_\star^{Spin}(-)\to H_\star(-)$) $F_8/F_6$ is viewed as the image of Hurewicz map $\Omega_8^{Spin}(K(\z^2,2))\to H_8(K(\z^2,2))$, and $\{x^iy^{n-i*}\}_{i=0}^n\in H_8(K(\z^2,2))$ is the dual basis of $\{x^iy^{n-i}\}_{i=0}^n$.
		
		View $K(\z^2,2)$ as CW-complex $\mathbb{CP}^\infty\times \mathbb{CP}^\infty$ (with the ordinary CW-structure) and let $(\mathbb{CP}^\infty\times \mathbb{CP}^\infty)_i$ be its $i$-skeleton, we have inclusions:
		$$\xymatrix{
			((\mathbb{CP}^\infty\times \mathbb{CP}^\infty)_4,\emptyset)\ar[r]\ar[d]&(\mathbb{CP}^\infty\times \mathbb{CP}^\infty,\emptyset)\\
			((\mathbb{CP}^\infty\times \mathbb{CP}^\infty)_4,(\mathbb{CP}^\infty\times \mathbb{CP}^\infty)_2)&}$$
		
		By comparing Atiyah-Hirzebruch spectral sequence of the above pairs of spaces, these inclusion map induce isomorphism:
		
		$$F_6/F_4=\Omega_8^{Spin}((\mathbb{CP}^\infty\times \mathbb{CP}^\infty)_4,\mathbb{CP}^\infty\times \mathbb{CP}^\infty)_2)=\hat{\Omega}_8^{Spin}(\vee_{i=1}^3 S^4)=\oplus_{i=1}^3\Omega_4^{Spin}$$
		Combing the fact $\hat{p}_1:\Omega_4^{Spin}\cong 24\mathbb{Z}$, we have isomorphism:
		$$(\hat{p}_1x^2,\hat{p}_1xy,\hat{p}_1y^2): F_4/F_0\to 24\mathbb{Z}^3$$  
		
		Let $V^4_2\subset \mathbb{P}^5$ be a degree $2$ hypersurface, $c\in H^2(V^4_2)$ be the restriction of $c_1(O(-1))\in H^2(\mathbb{P}^5)$, $Sp_A$ be a spin structure of $V^4_2$. Define $M_4,M_6,M_8\in \Omega_8^{Spin}(K(\z^2,2))$ be $(V^4_2,c,0,Sp_A), (V^4_2,c,c,Sp_A),(V^4_2,0,c,Sp_A)$. 
		
		Let $a,b,\in H^2(\mathbb{P}\times \mathbb{P}^3)$ be $c_1(O(-1)\times \mathbb{C}),c_1(\mathbb{C}\times O(-1))$,$Sp_B$ be a spin structure of $\mathbb{P}\times \mathbb{P}^3$. Define $M_5,M_7\in\Omega_8^{Spin}(K(\z^2,2))$ be $(\mathbb{P}\times \mathbb{P}^3,a,b,Sp_B), (\mathbb{P}\times \mathbb{P}^3,b,a,Sp_B)$. 
		
		Let $\{d_i\}_i^4\in H^2(S^2\times S^2\times S^2\times S^2)$ be the dual basis of the fundamental class of the $S^2$ factors, $Sp_C$ be a spin structure of $S^2\times S^2\times S^2\times S^2$. Define $M_1,M_2, M_3\in\Omega_8^{Spin}(K(\z^2,2))$ be $(S^2\times S^2\times S^2\times S^2,d_1+d_2+d_3+d_4,0,Sp_C), (S^2\times S^2\times S^2\times S^2,d_1+d_2+d_3,d4,Sp_C), (S^2\times S^2\times S^2\times S^2,0,d_1+d_2+d_3+d_4,Sp_C)$.
		
		direct computation gives:
\begin{center}		
		\begin{tabular}{|l|c|c|c|c|c|c|c|c|c|c|}
			\hline
			&$\hat{p}_1x^2$&$\hat{p}_1xy$&$\hat{p}_1y^2$&$x^4$&$x^3y$&$x^2y^2$&$xy^3$&$y^4$&$\hat{p}_1^2$&$Sign$\\
			\hline
			$M_1$&$0$&$0$&$0$&$24$&$0$&$0$&$0$&$0$&$0$&$0$\\
			$M_2$&$0$&$0$&$0$&$0$&$6$&$0$&$0$&$0$&$0$&$0$\\
			$M_3$&$0$&$0$&$0$&$0$&$0$&$0$&$0$&$24$&$0$&$0$\\
			$M_4$&$-2$&$0$&$0$&$2$&$0$&$0$&$0$&$0$&$2$&$2$\\
			$M_5$&$0$&$-2$&$0$&$0$&$1$&$0$&$0$&$0$&$0$&$0$\\
			$M_6$&$-2$&$-2$&$-2$&$2$&$2$&$2$&$2$&$2$&$2$&$2$\\
			$M_7$&$0$&$-2$&$0$&$0$&$0$&$0$&$1$&$0$&$0$&$0$\\
			$M_8$&$0$&$0$&$-2$&$0$&$0$&$0$&$0$&$2$&$2$&$2$\\
			$\mathbb{HP}$&$0$&$0$&$0$&$0$&$0$&$0$&$0$&$0$&$1$&$1$\\
			$Bott$&$0$&$0$&$0$&$0$&$0$&$0$&$0$&$0$&$0$&$2^57$\\

			\hline
		\end{tabular}
\end{center}		
		By elementary row operations, we obtain another sequence of elements in $\Omega_8^{Spin}(K(\z^2,2))$, and this tabular can be change to
\begin{center}		
		\begin{tabular}{|l|c|c|c|c|c|c|c|c|c|c|}
			\hline
			&$\hat{p}_1x^2$&$\hat{p}_1xy$&$\hat{p}_1y^2$&$x^4$&$x^3y$&$x^2y^2$&$xy^3$&$y^4$&$\hat{p}_1^2$&$Sign$\\
			\hline
			$b_1$&$24$&$0$&$0$&$0$&$0$&$0$&$0$&$0$&$0$&$0$\\
			$b_2$&$0$&$12$&$0$&$0$&$0$&$0$&$0$&$0$&$0$&$0$\\
			$b_3$&$0$&$0$&$24$&$0$&$0$&$0$&$0$&$0$&$0$&$0$\\
			$b_4$&$-2$&$0$&$0$&$2$&$0$&$0$&$0$&$0$&$0$&$0$\\
			$b_5$&$0$&$-2$&$0$&$0$&$1$&$0$&$0$&$0$&$0$&$0$\\
			$b_6$&$0$&$6$&$0$&$0$&$0$&$2$&$0$&$0$&$0$&$0$\\
			$b_7$&$0$&$-2$&$0$&$0$&$0$&$0$&$1$&$0$&$0$&$0$\\
			$b_8$&$0$&$0$&$-2$&$0$&$0$&$0$&$0$&$2$&$0$&$0$\\
			$\mathbb{HP}$&$0$&$0$&$0$&$0$&$0$&$0$&$0$&$0$&$1$&$1$\\
			$Bott$&$0$&$0$&$0$&$0$&$0$&$0$&$0$&$0$&$0$&$2^57$\\
			
			\hline
		\end{tabular}
\end{center}			
		Recall that we have:
		$$F_8/F_6=\mathbb{Z}\{2x^{4*},x^3y^*,2x^2y^{2*},xy^{3*},2y^{4*}\}$$
		$$F_6/F_4\cong\mathbb{Z}_2$$
		$$(\hat{p}_1x^2,\hat{p}_1xy,\hat{p}_1y^2): F_4/F_0\to 24\mathbb{Z}^3$$
		$$F_0=\mathbb{Z}\{\mathbb{HP}^2,Bott\}$$
		So the exact sequence $0\to F_4/F_0\to F_6/F_0\to F_6/F_4\to 0$ is non-split, and $\{b_i\}_{i=1}^8\cup \{\mathbb{HP}^2,Bott\}$ is a basis of $\Omega_8^{Spin}(K(\z^2,2))$, this implies Proposition \ref{XFP}.
	\end{proof}

\end{document}